\tikzset{mydot/.style={draw, circle, fill=black},
  myset/.style={draw, ellipse, very thick}}
\newcommand{\myleftrightarrows}[1]{\mathrel{\,\substack{\xleftarrow{#1}
      \\[-0.3ex] \xrightarrow{#1}}\,}}
\newcommand{\card}[1]{\left\lvert#1\right\rvert}
\newcommand{\im}{{\rm Im}} \newcommand{\cl}{{\sf Clopen}}
\newcommand{\ucl}{{\sf UpClopen}} \newcommand{\reg}{{\sf Reg}}
\newcommand{\N}{\mathbb N} \newcommand{\cB}{\mathcal B}
\newcommand{\cL}{\mathcal L} \newcommand{\cM}{\mathcal M}
\newcommand{\cN}{\mathcal N} \newcommand{\cP}{\mathcal P}
\newcommand{\cR}{\mathcal R} \newcommand{\A}{{\bf a}}
\newcommand{\x}{{\bf x}} \newcommand{\y}{{\bf y}}
\newcommand{\ii}{{\bf i}}
\theoremstyle{plain} \newtheorem{theorem}{Theorem}[section]
\newtheorem{corollary}[theorem]{Corollary}
\newtheorem{lemma}[theorem]{Lemma}
\newtheorem{proposition}[theorem]{Proposition}
\theoremstyle{definition} \newtheorem{definition}[theorem]{Definition}
\newtheorem{remark}[theorem]{Remark}
\newtheorem{example}[theorem]{Example}
\begin{document}
\title{Difference hierarchies and duality\\ with an application to
  formal languages\footnote{This project has received funding from the European Research
  Council (ERC) under the European Union's Horizon 2020 research and
  innovation program (grant agreement No. 670624).}}
  \date{}
\author[a]{C\' elia Borlido} \author[a]{Mai Gehrke} \author[b]{Andreas
  Krebs} \author[c]{Howard Straubing}

\affil[a]{\small Laboratoire J. A. Dieudonn\'e, CNRS, Universit\'e
  C\^ote d'Azur, France} \affil[b]{\small Wilhelm-Schickard-Institut
  Universit\"at T\"ubingen, Germany} \affil[c]{\small Boston College,
  United States}

\maketitle
\begin{abstract}
  The notion of a difference hierarchy, first introduced by Hausdorff,
  plays an important role in many areas of mathematics, logic and
  theoretical computer science such as descriptive set theory,
  complexity theory, and the theory of regular languages and automata.
  From a lattice theoretic point of view, the difference hierarchy
  over a bounded distributive lattice stratifies the Boolean algebra
  generated by it according to the minimum length of difference chains
  required to describe the Boolean elements.  While each Boolean
  element is given by a finite difference chain, there is no canonical
  such writing in general. We show that, relative to the filter
  completion, or equivalently, the lattice of closed upsets of the
  dual Priestley space, each Boolean element over the lattice has a
  canonical minimum length decomposition into a Hausdorff
  difference. As a corollary each Boolean element over a (co-)Heyting
  algebra has a canonical difference chain. With a further
  generalization of this result involving a directed family of
  adjunctions with meet-semilattices, we give an elementary proof of
  the fact that a regular language is given by a Boolean combination
  of purely universal sentences using arbitrary numerical predicates
  if and only if it is given by a Boolean combination of purely
  universal sentences using only regular numerical predicates.
\end{abstract}
\section{Introduction}\label{sec:intro}
Hausdorff introduced the notion of a difference hierarchy in his work
on set theory~\cite{Hausdorff57}. Subsequently, the notion has played
an important role in descriptive set theory as well as in complexity
theory. More recently, it has seen a number of applications in the
theory of regular languages and
automata~\cite{GlaserSchmitzSelivanov16,CartonPerrinPin17}.  From a
lattice theoretic point of view, the difference hierarchy over a
bounded distributive lattice $D$ stratifies the Booleanization, $B$,
of the lattice in question. The Booleanization of $D$ is the (unique
up to isomorphism) Boolean algebra containing $D$ as a bounded
sublattice and generated (as a Boolean algebra) by $D$. The
stratification is made according to the minimum length of difference
chains required to describe an element $b\in B$:

\begin{equation}
  b=a_1-(a_2-(\ldots
  (a_{n-1}-a_n)\hspace*{-1pt}.\hspace*{-1pt}.\hspace*{-1pt}.\hspace*{-1pt}))\label{eq:14}
\end{equation}

\noindent where $a_1\geq a_2\geq\ldots \geq a_{n-1}\geq a_n$ are elements of
$D$.
One difficulty in the study of difference hierarchies is that in
general elements $b\in B$ do not have canonical associated difference
chains.

Stone duality \cite{Stone37} represents any bounded distributive
lattice as the simultaneously compact and open subsets of an
associated topological space known as the Stone dual space of the
lattice. Priestley duality~\cite{Priestley70a} is a rephrasing of this
duality which uses the Stone space of the Booleanization equipped with
a partial order to represent the lattice as the closed and open upsets
of the associated Priestley space. Priestley duality provides an
elucidating tool for the study of difference hierarchies. For one, the
minimum length of difference chains for an element $b\in B$ has a nice
description relative to the Priestley dual space $X$ of $D$ as the
length of the longest chain of points $x_1\leq x_2\leq\dots\leq x_n$
in $X$ so that $x_i$ belongs to the clopen corresponding to $b$ if and
only if $i$ is odd. Further, if we allow difference chains of closed
upsets of the Priestley space, rather than clopen upsets, then every
element $b\in B$ has a canonical difference chain which is of minimum
length. In particular, if the lattice $D$ is a co-Heyting algebra,
then the canonical difference chain of closed upsets consists of
closed and open upsets and thus every $b\in B$ has a canonical
difference chain in $D$. We present this material, which is closely
related to work by Leo \`Esakia on skeletal subalgebras of closure
algebras~\cite{Esakia85}, in Section~\ref{sec:diff+closed}.

In Section~\ref{sec:dir-fam-adj}, we consider a situation where $B$ is
equipped with a directed family of adjunctions. Using Stone duality in
the form of canonical extensions, we generalize the results of
Section~\ref{sec:diff+closed}. In turn, the results of
Section~\ref{sec:dir-fam-adj} are used in Section~\ref{sec:LoW} for an
application in the setting of logic on words.  More precisely, we give
an elementary proof of the fact that a regular language is given by a
Boolean combination of purely universal sentences using arbitrary
numerical predicates if and only if it is given by a Boolean
combination of purely universal sentences using only regular
predicates. That is, expressed in a formula, we give an elementary
proof of the equality

\[
  \cB \Pi_1[\cN] \cap \reg = \cB\Pi_1[\reg].
\]

This result was first proved by Macial, P\'eladeau and Th\'erien
in~\cite{MacielPeladeauTherien00}. For more details,
see~\cite{straubing94}.

Before each of the main sections 3, 5, and 7, we include the
background needed.  Namely, in Section~\ref{sec:prel} we introduce the
basics on lattices and duality, Section~\ref{sec:canext} is an
introduction to canonical extensions, and Section~\ref{sec:back-LoW}
contains the preliminaries on recognition and logic on words.
\section{Preliminaries on lattices and duality}\label{sec:prel}
\paragraph{Semilattices, lattices, and Boolean algebras}

A \emph{semilattice} is an idempotent commutative monoid.  A
\emph{bounded distributive lattice} is a structure
$(D,\wedge,\vee,0,1)$ so that $(D,\wedge,1)$ and $(D,\vee,0)$ both are
semilattices and the operations~$\vee$ and~$\wedge$ distribute over
each other.
The relation $a\leq b$ if and only if
$a\wedge b=a$ if and only if $a\vee b=b$ is a partial order on $D$ and one
can recover the operations $\vee$ and $\wedge$  as the binary supremum 
and infimum, and the constants $1$ and $0$ as the maximum and minimum 
elements of the poset, respectively.
A \emph{Boolean algebra} is a bounded distributive lattice equipped
with a unary operation $\neg$ satisfying the identities $0 = a \wedge
\neg a$ and $1 = a \vee \neg a$.
The fundamental example of a Boolean algebra is a powerset with the
set-theoretic operations, obtained from the inclusion order and the
negation $\neg$ given by complementation. Thus any subset of a
powerset closed under the bounded lattice operations is a bounded
distributive lattice. We view the classes of bounded distributive
lattices and of Boolean algebras as categories in which the morphisms
are the algebraic homomorphisms, that is, the maps that preserve all
the basic operations.
\paragraph{Stone duality}
Stone duality~\cite{Stone37} shows that all bounded distributive
lattices are, up to isomorphism, of the above form. In fact, it does
more than that as it provides a category of topological spaces dually
equivalent to the category of bounded distributive lattices thus
yielding an embedding of each bounded distributive lattice into a
certain sublattice of the lattice of open sets of the corresponding
space. We work here with the equivalent formulation due to
Priestley~\cite{Priestley70a}, which uses ordered compact Hausdorff
spaces rather than the non-Hausdorff spaces of the classical Stone
duality.

Recall that a \emph{prime filter} of a lattice $D$ is a non-empty
upset $x$ of $D$ that is closed under binary meets and such that
whenever $a \vee b \in x$, we have $a \in x$ or $b \in x$.
The \emph{Priestley dual space} of~$D$ consists of the set $S(D)$ of
prime filters of $D$ equipped with the topology generated by the sets
$\widehat{a}=\{x\in S(D)\mid a\in x\}$ and their complements, where
$a\in D$. Further, this space is ordered by inclusion of prime
filters. One can show that the resulting ordered topological space
$(X,\pi,\leq)$ is compact and \emph{totally order disconnected} (TOD).
That is, if $x\nleq y$ in $X$ then there is a clopen upset $V$ of $X$
with $x\in V$ and $y\not\in V$. Totally order disconnected compact
spaces are called \emph{Priestley spaces} and the appropriate
structure preserving maps are the continuous and order preserving
maps. Indeed, given a bounded lattice homomorphism, one can show that
the preimage map restricted to prime filters is a continuous and order
preserving map between the corresponding Priestley spaces. In the
other direction, given a Priestley space $(X,\pi,\leq)$ the collection
$\ucl(X,\pi,\leq)$ of subsets of $X$ that are simultaneously upsets
and closed and open (called clopen) forms a lattice of sets. Further,
given a continuous and order preserving map between Priestley spaces,
the inverse image map restricted to the clopen upsets is a bounded
lattice homomorphism. These functors account for the dual equivalence
of the category of bounded distributive lattices and the category of
Priestley spaces. On objects, this means that $D\cong \ucl(S(D))$ (via
the map $a\mapsto\widehat{a}$) for any bounded distributive lattice
$D$ and $X\cong S(\ucl(X))$ (via the map $x\mapsto\{V\in \ucl(X)\mid
x\in V\}$) for any Priestley space $X$. In addition, the double dual
of a morphism, on either side of the duality, is naturally isomorphic
to the original. For more details
see~\cite[Chapter~11]{DaveyPriestley02}.
\paragraph{Booleanization}
The \emph{Booleanization}, or free Boolean extension, $D^-$, of a
bounded distributive lattice $D$ is a Boolean algebra with a bounded
lattice embedding $D\to D^-$, satisfying a universal property. Namely,
given any bounded lattice homomorphism $h\colon D\to B$ into a Boolean
algebra, there is a unique extension $h^-\colon D^-\to B$ so that the
following diagram commutes:
\begin{center}
  \begin{tikzpicture}[node distance = 20mm, semithick]
    \node (D) at (0,0) {$D$}; \node[right of = D] (D-) {$D^-$};
    \node[below of = D-, yshift = 5mm] (B) {$B$};
    \draw[->] (D) to (D-); \draw[->] (D) to node[left, yshift =
    -1mm]{\footnotesize$h$}(B); \draw[->, dashed] (D-) to node[right,
    yshift = 1mm] {\footnotesize$h^-$}(B);
  \end{tikzpicture}
\end{center}

\noindent The fact that Boolean algebras are bounded distributive
lattices with an additional operation satisfying some set of equations
implies, for general algebraic/categorical reasons, that a free
Boolean extension where $D\to D^-$ is only guaranteed to be a
homomorphism exists. The Booleanization has a few extra properties:
first of all the homomorphism $D\to D^-$ is in fact an
\emph{embedding}. Secondly, given \emph{any} embedding of $D$ into
\emph{any} Boolean algebra, the Boolean algebra generated by the image
is isomorphic to $D^-$. The former property follows as soon as one
shows the existence of at least one embedding of $D$ into some Boolean
algebra. The latter property follows as one can show that a
homomorphism of Boolean algebras which is injective on a distributive
lattice that generates the domain must be injective.

It is well known that every distributive lattice can be embedded in a
Boolean algebra. For one, it is a consequence of Stone duality since
the Stone map $a\mapsto \widehat{a}$ is a lattice embedding into the
powerset of the dual space. However, showing that the Stone map is an
embedding requires a non-constructive principle, so one may ask
whether such an embedding is available in a constructive manner. A
first attempt was made by MacNeille~\cite{MacNeille39}, but there was
a gap in his proof. A corrected version of MacNeille's argument was
subsequently provided by Peremans~\cite{Peremans57}. Later yet,
Gr\"atzer and Schmidt~\cite{GratzerSchmidt58} and then
Chen~\cite{Chen66} also provided such constructive embeddings.  In
particular, Gr\"atzer and Schmidt provide a simple proof by showing
that any bounded distributive lattice $D$ embeds in the Boolean
algebra of finitely generated congruences of $D$.

In the setting of Priestley duality, we have seen that $D$ is
isomorphic to the lattice of clopen upsets of its dual space $X$. It
thus follows that $D^-$ is isomorphic to the Boolean subalgebra of
$\cP(X)$ generated by $\ucl(X)$. One can show that this is the Boolean
algebra of all clopen subsets of $X$. That is,

\[
  D^-\cong\cl(X).
\]
In fact, if $(X,\pi,\leq)$ is the Priestley space of $D$, then
$(X,\pi,=)$ is the Priestley space of $D^-$ (and $(X,\pi)$ is the
Stone space of $D^-$). This is a consequence of the fact, due to
Nerode~\cite{Nerode59}, that each prime filter $x$ of $D$ extends to a
unique prime filter $x^-$ of $D^-$ given by

\[
  x^-=\{b\in D^-\mid\exists a,a'\in D\text{ with }a\in x\text{ and
  }a'\not\in x\text{ and }a- a'\leq b\},
\]

\noindent where $a- a'$ is shorthand for $a\wedge\neg
a'$. Accordingly, given an element $b\in D^-$ we write $\widehat{b}$
for the corresponding clopen of $X$. As follows from Nerode's result,
in terms of the prime filters of $D$, we have $\widehat{b}=\{x \in
S(D)\mid\exists a,a'\in D\text{ with } a\in x, a'\not\in x,\text{ and
} a- a'\leq b\}$.
\paragraph{Adjunctions}
Let $P$ and $Q$ be posets. Given maps $f:P\to Q$ and $g:Q\to P$, we
say that $(f,g)$ is an \emph{adjoint pair}\footnote{Sometimes called a
  covariant Galois connection.} provided

\[
  \forall p\in P\quad \forall q\in Q \qquad\left(\ f(p)\leq
    q\quad\iff\quad p\leq g(q)\ \right).
\]

\noindent Note that in this case, $f$ and $g$ uniquely determine each
other since

\[
  f(p)=\bigwedge\{q\in Q\mid p\leq g(q)\}\quad\text{and}\quad
  g(q)=\bigvee\{p\in P\mid f(p)\leq q\}.
\]
Accordingly, we also call $f$ the \emph{lower adjoint} of $g$ and $g$
the \emph{upper adjoint} of $f$. Thus a function has a lower,
respectively upper, adjoint provided the appropriate minima,
respectively maxima, exist. Also, one can show that lower adjoints
preserve all existing suprema, while upper adjoints preserve all
existing infima. In the case that the posets $P$ and $Q$ are complete
lattices this gives a simple criterion for the existence of adjoints.

\begin{proposition}\label{prop:adj}
  A map between complete lattices has a lower adjoint if and only if
  it preserves arbitrary meets, and it has an upper adjoint if and
  only if it preserves arbitrary joins.
\end{proposition}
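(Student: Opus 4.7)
The plan is to leverage the two facts stated just before the proposition: that adjoints are determined by explicit join/meet formulas, and that lower (respectively upper) adjoints preserve all existing suprema (respectively infima). One direction of each biconditional is then immediate. Namely, if a map $\phi\colon Q \to P$ has a lower adjoint, then $\phi$ is itself an upper adjoint and therefore preserves arbitrary meets; dually, if $\phi$ has an upper adjoint, it preserves arbitrary joins.

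For the nontrivial direction, suppose $\phi\colon Q \to P$ preserves arbitrary meets. Using completeness of $Q$, I would mimic the formula already displayed and define
\[
  f(p) := \bigwedge\{q \in Q \mid p \leq \phi(q)\}
\]
for each $p \in P$, then verify that $(f,\phi)$ is an adjoint pair. The main computation is the unit inequality $p \leq \phi(f(p))$: since $\phi$ preserves arbitrary meets,
\[
  \phi(f(p)) \,=\, \bigwedge\{\phi(q) \mid p \leq \phi(q)\} \,\geq\, p.
\]
The counit inequality $f(\phi(q)) \leq q$ is automatic, because $q$ itself lies in the set over which the meet defining $f(\phi(q))$ is taken. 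From these two inequalities, both directions of the adjunction equivalence follow: if $f(p) \leq q$, then monotonicity of $\phi$ gives $p \leq \phi(f(p)) \leq \phi(q)$; conversely, if $p \leq \phi(q)$, then $q$ is among the elements whose meet defines $f(p)$, so $f(p) \leq q$.

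The case of upper adjoints is entirely dual: assuming $f\colon P \to Q$ preserves arbitrary joins, define
\[
  g(q) := \bigvee\{p \in P \mid f(p) \leq q\},
\]
and check the corresponding counit/unit inequalities using join-preservation of $f$.

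The only subtle point, and essentially the one step that could be overlooked, is that meet-preservation (respectively join-preservation) implies monotonicity, which is needed above: from $p \leq q$ one has $p = p \wedge q$, hence $\phi(p) = \phi(p) \wedge \phi(q)$, i.e.\ $\phi(p) \leq \phi(q)$. Beyond this, no real obstacle arises; the proof is a routine verification once the defining formulas are in hand.
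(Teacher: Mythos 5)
Your proof is correct and is the standard argument: the paper itself states this proposition without proof, deferring to Davey--Priestley, and your verification (defining the candidate adjoint by the displayed meet/join formula, checking the unit and counit inequalities, and noting that meet- or join-preservation yields the needed monotonicity) is exactly the argument that reference supplies. Nothing is missing.
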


Finally, we remark that if $(f,g)$ is an adjoint pair, then one can
show that $fgf=f$ and $gfg=g$ and that the operation $c=gf$ is always
a closure operator on $P$ in the sense that $p\leq c(p)=cc(p)$ holds
for all $p\in P$.  For the reader who would like to see the proofs of
these facts we refer to \cite[Chapter~7]{DaveyPriestley02}.
\paragraph{Heyting and co-Heyting algebras}
\emph{Heyting algebras} are the algebras for intuitionistic
propositional logic in the same sense that Boolean algebras are the
algebras for classical propositional logic.  A Heyting algebra is a
bounded distributive lattice equipped with an additional binary
operation, $\to$, which models the intuitionistic implication. The
order dual notion (obtained by interchanging the meet and join as well
as the $0$ and $1$ of the lattice) is called a co-Heyting algebra. We
will focus on co-Heyting algebras here as this is more convenient for
the sequel, but any result about the one notion has a corresponding
order dual result about the other notion.

\begin{definition}
  A \emph{co-Heyting algebra} is an algebra $(D,\wedge,\vee,0,1,/)$
  such that $(D,\wedge,\vee,0,1)$ is a bounded distributive lattice
  and the operation $/$ is the lower adjoint of the operation~$\vee$,
  in the sense that $((\ )/b,b\vee(\ ))$ is an adjoint pair for each
  $b\in D$.  That is, the operation $/$ is uniquely determined by the
  following property:

  \[
    \forall a, b, c\in D\qquad\quad ( \ a/b\leq c \quad\iff\quad a\leq
    b\vee c\ ).
  \]
\end{definition}

\noindent As stated above, these are precisely the order duals of
Heyting algebras. That is, $(D,\wedge,\vee,0,1,/)$ is a co-Heyting
algebra if and only if $(D,\vee,\wedge,1,0,\to)$ is a Heyting algebra,
where $a\to b=b/a$.  Notice that the adjunction property connecting
$\vee$ and $/$ implies that the co-Heyting operation on a distributive
lattice, if it exists, is unique and is given by

\[
  a/b=\bigwedge\{c\mid a\leq b\vee c\}.
\]

\noindent Thus one may think of a co-Heyting algebra as a special kind
of distributive lattice, namely one for which $\bigwedge\{c\mid a\leq
b\vee c\}$ exists for all $a,b\in D$ and for which the thus defined
operation satisfies the adjunction property in the definition of
co-Heyting algebras. One can show that the class of co-Heyting
algebras forms a variety, but here we are rather interested in a
description of these algebras involving the Booleanization of the
underlying bounded distributive lattice. It is not difficult to see
that the following proposition states that a bounded distributive
lattice $D$ is a co-Heyting algebra if and only if the inclusion of
$D$ in its Booleanization has a lower adjoint. See
\cite[Proposition~3]{Gehrke14} for the corresponding fact for Heyting
algebras. We include a proof for self-containment.

\begin{proposition}\label{prop:ceiling}
  Let $D$ be a bounded distributive lattice. Then $D$ is a co-Heyting
  algebra if and only if there is a ceiling function

  \[
    D^-\longrightarrow D, \quad b\mapsto \lceil b
    \rceil=\bigwedge\{c\in D\mid b\leq c\}.
  \]
\end{proposition}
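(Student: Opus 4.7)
The plan is to prove the biconditional by explicitly translating between the co-Heyting operation $/$ on $D$ and the ceiling function $\lceil\cdot\rceil\colon D^-\to D$, exploiting that the ceiling, if it exists, is precisely the lower adjoint of the inclusion $D\hookrightarrow D^-$ and is therefore, by Proposition~\ref{prop:adj}, uniquely determined and forced to preserve joins. The two directions amount to converting an instance of the adjunction defining a co-Heyting structure into an instance of the adjunction defining a ceiling, and conversely.

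For the forward direction, assume $D$ is a co-Heyting algebra. The key computation I would perform is that for any $a,a'\in D$,
\[
\{c\in D\mid a-a'\leq c\}=\{c\in D\mid a\leq a'\vee c\},
\]
because in the Boolean algebra $D^-$ the inequality $a\wedge\neg a'\leq c$ is equivalent to $a\leq a'\vee c$. By the adjunction defining $/$, the infimum of the right-hand set exists in $D$ and equals $a/a'$, so $\lceil a-a'\rceil=a/a'$. To extend this to arbitrary $b\in D^-$, I would invoke the standard disjunctive normal form $b=\bigvee_{i=1}^n(a_i-a'_i)$ with $a_i,a'_i\in D$, which is available since $D$ generates $D^-$ as a Boolean algebra. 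A lower adjoint must preserve joins, so one expects $\lceil b\rceil=\bigvee_{i=1}^n(a_i/a'_i)$, and I would verify this directly: the inequality $a_i-a'_i\leq a_i/a'_i$ (itself a consequence of $a_i\leq a'_i\vee(a_i/a'_i)$) yields $b\leq\bigvee_i(a_i/a'_i)$, while any $c\in D$ with $b\leq c$ satisfies $a_i-a'_i\leq c$, hence $a_i\leq a'_i\vee c$, hence $a_i/a'_i\leq c$ for every~$i$.

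For the reverse direction, assume the ceiling function $\lceil\cdot\rceil$ exists on all of $D^-$. I would define $a/b:=\lceil a\wedge\neg b\rceil\in D$, with $\neg$ the complement in $D^-$. The required adjunction
\[
a/b\leq c\quad\iff\quad a\leq b\vee c
\]
then unfolds in two steps: first, by the defining property of the ceiling applied to $a\wedge\neg b$ with $c\in D$, $\lceil a\wedge\neg b\rceil\leq c$ iff $a\wedge\neg b\leq c$; second, by Boolean manipulation in $D^-$, this is equivalent to $a\leq b\vee c$.

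The only nontrivial input is the disjunctive normal form $b=\bigvee_i(a_i-a'_i)$ used in the forward direction, and this is the step I expect to be the main obstacle — not because it is deep, but because it is what lets us pass from the existence of individual ceilings $\lceil a-a'\rceil$ to existence of the ceiling on arbitrary Boolean combinations. It is a standard consequence of $D$ generating $D^-$ as a Boolean algebra together with distributivity (alternatively one can read it off of Priestley duality, writing any clopen of the dual space as a finite union of sets of the form $\widehat a\setminus\widehat{a'}$), and once it is in place all remaining computations are routine, modulo care about which operations take place in $D$ and which in $D^-$.
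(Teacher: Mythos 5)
Your proposal is correct and follows essentially the same route as the paper's proof: both identify $\lceil a-a'\rceil$ with $a/a'$ via the chain $a/a'\leq c \iff a\leq a'\vee c \iff a-a'\leq c$, and both extend to general elements of $D^-$ through the disjunctive normal form $b=\bigvee_i(a_i-a'_i)$, setting $\lceil b\rceil=\bigvee_i(a_i/a'_i)$. The only cosmetic difference is that you separate the two implications while the paper runs them in parallel through a single string of equivalences.
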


\begin{proof}
  Let $ D$ be a bounded distributive lattice and let $a,b,c\in
  D$. Consider these as elements, first of the co-Heyting algebra $D$
  and then as elements of the Boolean algebra $ D^-$. We have
 
  \[
    a/b\leq c \ \iff\ a\leq b\vee c \ \iff\ a-b \leq c.
  \]
  
  \noindent Accordingly, we see that $ D$ is a co-Heyting algebra if
  and only if, the ceiling function above is well defined for all
  elements of $ D^-$ of the form $a-b$ with $a,b\in D$. However, since
  the elements of $ D^-$ are given by finite joins of elements of this
  form and since the ceiling function preserves joins (being a lower
  adjoint of the inclusion of $D$ in $D^-$) we can easily verify that
  this is sufficient to show that the ceiling function is globally
  defined by extending by join. To this end, let
  $d=\bigvee_{i=1}^n(a_i- b_i)$ be an arbitrary element of $ D^-$ and
  let $c\in D$, then we have

\begin{align*}
  d\leq c\iff &\forall i\in\{1,\ldots,n\}\quad a_i- b_i\leq c\\
  \iff & \forall i\in\{1,\ldots,n\}\quad a_i/ b_i\leq c\\
  \iff & \bigvee_{i=1}^n a_i/  b_i \leq c
\end{align*}

\noindent so that $\lceil d\rceil:= \bigvee_{i=1}^n a_i/b_i$ yields
the desired ceiling function.
\end{proof}

Since Heyting and co-Heyting algebras may be seen as certain bounded
distributive lattices, it is useful to understand which Priestley
spaces they correspond to. This was worked out by
\`Esakia~\cite{Esakia74} independently of Priestley's work: a bounded
distributive lattice $ D$ is a Heyting algebra if and only if, for any
clopen $V\subseteq X$ of the Priestley dual of $ D$, the generated
downset ${\downarrow}V$ is again clopen. This of course is equivalent
to the order dual characterization of co-Heyting algebras but we
include a proof to illustrate the correspondence between the algebraic
and topological formulations.

\begin{theorem}[{\cite{Esakia74}}]\label{thm:co-esak}
  Let $ D$ be a bounded distributive lattice and $X$ its Priestley
  dual space. Then $ D$ admits a co-Heyting structure if and only if
  for each $V\subseteq X$ clopen, ${\uparrow}V$ is again clopen. When
  this is the case, the map $\lceil\ \rceil\colon D^-\to D$ is
  naturally isomorphic to the map on clopen subsets of $X$ given by
  $V\mapsto{\uparrow}V$.
\end{theorem}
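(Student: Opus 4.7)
The plan is to translate Proposition~\ref{prop:ceiling} through Priestley duality. Under the identifications $D\cong\ucl(X)$ and $D^-\cong\cl(X)$, the embedding $D\hookrightarrow D^-$ becomes the set-theoretic inclusion $\iota\colon\ucl(X)\hookrightarrow\cl(X)$. Proposition~\ref{prop:ceiling} therefore says that $D$ admits a co-Heyting structure if and only if $\iota$ has a lower adjoint. My goal is to identify this lower adjoint, when it exists, with $V\mapsto{\uparrow}V$, and to show that it exists exactly when ${\uparrow}V$ is clopen for every clopen $V$.

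For the adjunction, observe that if $U\in\ucl(X)$ and $V\in\cl(X)$, then trivially $V\subseteq U$ iff ${\uparrow}V\subseteq U$, the nontrivial direction using only that $U$ is an upset. Thus, as soon as ${\uparrow}V\in\ucl(X)$ for every clopen $V$, the map $V\mapsto{\uparrow}V$ is a lower adjoint of $\iota$, so $D$ is co-Heyting and $\lceil V\rceil={\uparrow}V$. This gives the "if" direction together with the concrete description of the ceiling.

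For the converse, assume $D$ is co-Heyting and let $V$ be clopen. I will use two standard features of Priestley spaces. First, since $\leq$ is a closed subset of $X\times X$, ${\uparrow}V$ is closed whenever $V$ is closed, so ${\uparrow}V$ is automatically a closed upset and only openness can fail. Second, for any closed $C\subseteq X$ one has
\[
  {\uparrow}C=\bigcap\{U\in\ucl(X)\mid C\subseteq U\};
\]
indeed, given $y\notin{\uparrow}C$, total order disconnectedness yields, for each $x\in C$, a clopen upset $U_x\ni x$ avoiding $y$, and compactness of $C$ extracts a finite subcover whose union is a clopen upset containing $C$ and missing $y$. Applied to $V$, this says that $\lceil V\rceil$ — the minimum of the family on the right-hand side, which exists by the co-Heyting hypothesis — equals ${\uparrow}V$. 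Hence ${\uparrow}V\in\ucl(X)$, and along the way we have verified that the ceiling map and the upset map agree under the duality, which is the asserted natural isomorphism.

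The only genuinely nontrivial step is the lemma identifying ${\uparrow}C$ with the intersection of the clopen upsets above it; once this Priestley-space fact is in hand, the rest is a routine transport of the adjunction in Proposition~\ref{prop:ceiling} across the duality.
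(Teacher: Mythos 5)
Your proposal is correct and follows essentially the same route as the paper: both reduce the statement to Proposition~\ref{prop:ceiling} and then use the identity ${\uparrow}K=\bigcap\{W\mid K\subseteq W,\ W \text{ clopen upset}\}$ for closed $K$ (the paper's equation~\eqref{eq:1}, which you prove via total order disconnectedness and compactness where the paper merely asserts it) to identify the ceiling of a clopen $V$ with ${\uparrow}V$ and conclude that the ceiling exists exactly when ${\uparrow}V$ is clopen. The only cosmetic difference is that you additionally invoke closedness of the order relation to see that ${\uparrow}V$ is closed, whereas the paper reads this off from the same intersection formula.
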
 

\begin{proof}
  This is a simple consequence of Proposition~\ref{prop:ceiling}. Note
  that by total order disconnectedness of $X$, for any closed (and
  thus compact) $K\subseteq X$, we have
 
  \begin{equation}
    {\uparrow}K=\bigcap\{W\subseteq X\mid K\subseteq W
    \text{ and $W$ clopen upset}\}. \label{eq:1}
  \end{equation}
  
  \noindent Now we see easily that for $V\subseteq X$ clopen, there is
  a least clopen upset (i.e., element of~$ D$) above~$V$ if and only
  if the collection $\{W\subseteq X\mid V\subseteq W \text{ and $W$
    clopen upset}\}$ has a minimum and this, in turn, happens if and
  only if ${\uparrow}V$ is clopen.
\end{proof}
\paragraph{Closed subsets and closed upsets in Priestley spaces}
We highlight a few useful facts about closed subsets and upsets in
Priestley spaces. First we note that closed subspaces of Priestley
spaces, that is, closed subsets equipped with the inherited topology
and order, are again Priestley spaces. This is because the TOD
property is inherited by any subspace of a TOD space, and compactness
is inherited by any closed subspace of a compact space.  In fact, if a
Priestley space $X$ is dual to a bounded distributive lattice $D$, it
is not difficult to see that the closed subspaces of $X$ correspond to
the lattice quotients of $D$,
cf.~\cite[Section~11.32]{DaveyPriestley02}.

The following fact about upsets of closed sets will be used
extensively in the sequel. For a subset $S \subseteq P$ of a poset
$P$, we use $\min(S)$ to denote the set of minimal elements of $S$.
\begin{proposition}
  \label{p:2}
  Let $X$ be a Priestley space and $K \subseteq X$ a closed subset.
  Then, ${\uparrow}K={\uparrow} \min(K)$ and this is a closed subset
  of $X$.
\end{proposition}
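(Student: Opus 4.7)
The plan is to establish the two assertions of the statement separately: first the set-theoretic equality ${\uparrow}K={\uparrow}\min(K)$, and then the fact that this common set is closed in $X$.

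For the equality, the inclusion ${\uparrow}\min(K)\subseteq{\uparrow}K$ is immediate from $\min(K)\subseteq K$. For the reverse inclusion, given $x\in K$ I will exhibit a minimal element of $K$ lying below $x$. The natural device is Zorn's lemma applied to $K_x := K\cap{\downarrow}x$, viewed with the inherited order (with the aim of producing a minimal, not maximal, element). Given any chain $C\subseteq K_x$, I plan to show that the family $\{K\cap{\downarrow}c\mid c\in C\}$ consists of closed subsets of $X$ and enjoys the finite intersection property: the latter is easy since $C$ is totally ordered, so any finite subfamily has a least member whose corresponding set is contained in all the others and itself contains that least member. Compactness of $X$ then yields a common point $y$, which provides a lower bound for $C$ in $K_x$. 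Zorn's lemma supplies a minimal element $m\in K_x$, and a quick check confirms $m\in\min(K)$: any $z\in K$ with $z\leq m$ lies in $K_x$ and hence equals $m$ by minimality.

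For the closedness of ${\uparrow}K$, I would simply reuse equation~\eqref{eq:1} from the proof of Theorem~\ref{thm:co-esak}: since $K$ is closed, we have ${\uparrow}K=\bigcap\{W\mid K\subseteq W,\ W\text{ clopen upset}\}$, which exhibits ${\uparrow}K$ as an intersection of clopen (in particular closed) subsets of $X$.

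The one nontrivial point I anticipate is the closedness of the sets ${\downarrow}c$ used in the Zorn step, since without it the FIP argument does not produce a point of $X$. This rests on the standard fact that in a Priestley space the order $\leq$ is a closed subset of $X\times X$, itself a consequence of compact Hausdorffness together with total order disconnectedness; closedness of $K_x\cap{\downarrow}c$ then follows as the intersection of three closed sets. Once this is in hand, the compactness/FIP argument runs smoothly, and the second assertion is a direct rereading of the previously derived equation~\eqref{eq:1}.
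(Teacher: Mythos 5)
Your proof is correct, and it takes a noticeably different route from the paper's, even though both rest on a Zorn-type argument plus compactness and both obtain closedness of ${\uparrow}K$ from equation~\eqref{eq:1}. The paper passes to the algebraic side of the duality: it views the points of $X$ as prime filters of the lattice $D$, takes a maximal chain $C\subseteq K$ through a given $x\in K$, observes that $x_0=\bigcap C$ is again a prime filter, and then uses the clopen upsets containing $K$ (equation~\eqref{eq:1} again) together with maximality of $C$ to conclude $x_0\in\min(K)$ with $x_0\leq x$. You instead stay entirely on the topological side: you apply Zorn's lemma to $K\cap{\downarrow}x$, producing lower bounds for chains via the finite intersection property of the closed sets $K\cap{\downarrow}c$, the key input being that principal downsets are closed in a Priestley space --- which, as you note, follows from total order disconnectedness (given $y\nleq c$, a clopen upset separating $y$ from $c$ is an open neighbourhood of $y$ missing ${\downarrow}c$). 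Your version avoids the representation of points as prime filters and the fact that the intersection of a chain of prime filters is prime, and it lands the chain's lower bound directly in $K$ rather than first in ${\uparrow}K$; the paper's version is somewhat shorter once the prime-filter machinery is taken for granted. Both arguments are complete.
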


\begin{proof}
  As seen in\eqref{eq:1} in the proof of Theorem~\ref{thm:co-esak},
  ${\uparrow}K$ is a closed subset of $X$ whenever $K\subseteq X$ is
  closed. Now consider $X$ as the dual space of a bounded distributive
  lattice $D$. Then the points of~$X$ are the prime filters of
  $D$. Let $x$ be any element of $K$ and let $C$ be a maximal chain of
  prime filters contained in $K$ with $x\in C$. Since $C$ is a chain,
  it is easy to show that $x_0=\bigcap_{x\in C}x$ is again a prime
  filter. Also, if $W= \widehat{a}$ is any clopen upset of $X$ with
  $K\subseteq W$, then $a\in y$ for all $y\in K$ and thus $a\in x_0$.
  It follows that $x_0\in W$ for all clopen upsets $W$ of $X$ with
  $K\subseteq W$ and thus $x_0\in{\uparrow}K$. Now by maximality of
  $C$ it follows that $x_0\in\min(K)$ and $x_0\leq x$. Thus
  ${\uparrow}K = {\uparrow} \min(K)$.
\end{proof}
\section{The difference hierarchy and closed
  upsets}\label{sec:diff+closed}
Let $ D$ be a bounded distributive lattice and~$ D^-$ its
Booleanization. Since $D^-$ is generated by~$ D$ as a Boolean algebra
and because of the disjunctive normal form of Boolean expressions,
every element of~$ D^-$ may be written as a finite join of elements of
the form $a- b$ with $a,b\in D$.  A fact, that is well known but
somewhat harder to see is that every element of $D^-$ is of the form

\begin{equation}
  a_1 - (a_2 - (\ldots -(a_{n-1}-a_n)
  \hspace*{-1pt}.\hspace*{-1pt}.\hspace*{-1pt}.\hspace*{-1pt})), \label{eq:11}
\end{equation}

\noindent for some $a_1, \dots, a_n \in D$.  The usual proof is by
algebraic computation and is not particularly enlightening.  It is
also a consequence of our results here. We begin with a technical
observation which is straightforward to verify.

\begin{proposition}\label{l:2}
  Let $B$ be a Boolean algebra and let $a_1, \ldots, a_{2m}$ be a
  decreasing sequence of elements of $B$. Then, the following equality
  holds:
  \[
    a_1-(a_2- (\dots-(a_{2m-1}- a_{2m})
    \hspace*{-1pt}.\hspace*{-1pt}. \hspace*{-1pt}.\hspace*{-1pt})) =
    (a_1-a_2) \vee (a_3-(a_4- (\dots-(a_{2m-1}- a_{2m})\dots))
  \]
 
  \noindent  where the join is disjoint,  and by induction we obtain\\[-1ex]
  \[
    a_1-(a_2- (\dots-(a_{2m-1}- a_{2m})
    \hspace*{-1pt}.\hspace*{-1pt}. \hspace*{-1pt}.\hspace*{-1pt})) =
    \bigvee_{n = 1}^m (a_{2n-1}-a_{2n})
  \]

  \noindent where the joinands are pairwise disjoint.
\end{proposition}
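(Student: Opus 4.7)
The plan is to prove the first equality directly by Boolean algebra, then obtain the second equality by induction on $m$, using the first as the inductive step.

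For the first equality, let $c = a_2-(a_3-(\dots-(a_{2m-1}-a_{2m})\dots))$ and $e = a_3-(a_4-(\dots-(a_{2m-1}-a_{2m})\dots))$, so that $c = a_2 - e = a_2 \wedge \neg e$. Then I compute
\[
a_1 - c \;=\; a_1 \wedge (\neg a_2 \vee e) \;=\; (a_1 \wedge \neg a_2) \vee (a_1 \wedge e) \;=\; (a_1-a_2) \vee e,
\]
where the last step uses that $e \leq a_3 \leq a_1$ by the decreasing assumption (so $a_1 \wedge e = e$). Disjointness of the two joinands comes from the same observation applied at the next level: $e \leq a_3 \leq a_2$, so $(a_1 - a_2) \wedge e \leq \neg a_2 \wedge a_2 = 0$.

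The second equality then follows by induction on $m$. The base case $m=1$ is trivial. For the inductive step, applying the first equality rewrites the left-hand side as $(a_1 - a_2) \vee e$, and the induction hypothesis applied to the decreasing sequence $a_3 \geq \cdots \geq a_{2m}$ expresses $e$ as the disjoint join $\bigvee_{n=2}^{m}(a_{2n-1}-a_{2n})$. Pairwise disjointness of the full collection $\{a_{2n-1}-a_{2n}\}_{n=1}^m$ is verified by the same pattern: for $n < n'$, one has $a_{2n-1}-a_{2n} \leq \neg a_{2n}$ while $a_{2n'-1}-a_{2n'} \leq a_{2n'-1} \leq a_{2n}$, so their meet is $0$.

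There is no real obstacle here; the proof is essentially a two-line Boolean calculation packaged inside an induction. The only thing one has to keep track of carefully is matching the parentheses of the nested difference chain with the two new chains (starting at $a_2$ and at $a_3$) that appear after one step of the recursion, and using the decreasing hypothesis exactly at the points where a term of the form $a_1 \wedge e$ or $\neg a_{2n} \wedge a_{2n'-1}$ must be simplified.
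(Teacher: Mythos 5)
Your proof is correct: the identity $a_1-c=(a_1\wedge\neg a_2)\vee(a_1\wedge e)=(a_1-a_2)\vee e$ together with the observation $e\leq a_3\leq a_2\leq a_1$ is exactly the routine Boolean computation the paper has in mind when it calls the proposition ``straightforward to verify'' and omits the proof. Nothing is missing, and the induction and the pairwise-disjointness check are handled properly.
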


One problem with difference chain decompositions of Boolean elements
over a bounded distributive lattice, which makes them difficult to
understand and work with, is that, in general, there is no `most
efficient' such decomposition. We give an example of a Boolean element
over a bounded distributive lattice that illustrates the problem of
non-canonicity of difference chains.

\begin{example}\label{sec:ex-dif}
  We specify the lattice via its Priestley space. Consider $X={\mathbb
    N}\cup\{x,y\}$ equipped with the topology of the one point
  compactification of the discrete topology on ${\mathbb
    N}\cup\{x\}$. That is, the frame of opens of $X$ is:
  
  \[
    \pi=\cP({\mathbb N}\cup\{x\})\cup\{C\cup\{y\}\mid C\subseteq
    {\mathbb N} \cup\{x\}\text{ is cofinite}\}.
  \]

  The space $(X,\pi)$ is compact since any open cover must contain a
  neighborhood of $y$ and it must be cofinite. Covering the remaining
  points only requires a finite number of opens.
  
  The order relation on $X$ is as depicted. That is, the only
  non-trivial order relation in $X$ is $x\leq y$.
  \begin{center}
    \scalebox{.7}{\begin{tikzpicture}
        \node[scale=0.5,mydot,label={[label distance=.3cm]270:$1$}]
        (1) {}; \node[scale=0.5,mydot,label={[label
          distance=.3cm]270:$2$}] (2) [right=1cm of 1] {}; \node[]
        (c') [right = 1cm of 2]{}; \node[ ,label=center:$\dots$] (c)
        [right = 1cm of c']{}; \node[] (d) [right = 1cm of c]{};
        \node[scale=0.5,mydot,label=right:$y$] (y) [above right = 1cm
        of d]{}; \node[scale=0.5,mydot,label=right:$x$] (x) [below
        right = 1cm of d]{}; \node[myset, fit=(1)] {}; \node[myset,
        fit=(2)] {}; \node[myset, fit=(c)(x)(y), minimum height=3cm,
        minimum width=6cm] {}; \node[myset, fit=(c)(y), minimum
        height=2cm, minimum width=5cm] {}; \path[draw] (x)--(y);
      \end{tikzpicture}}
  \end{center}
  We argue that $(X,\pi,\leq)$ is TOD. Given $x_1,x_2\in X$ with
  $x_1\nleq x_2$, we have two cases. Either $x_2\in{\mathbb
    N}\cup\{x\}$ and then $\{x_2\}^c$ is a clopen upset containing
  $x_1$ but not $x_2$, or $x_2=y$ and then $x_1\in{\mathbb N}$ and
  $\{x_1\}$ is a clopen upset containing $x_1$ but not $x_2$. It
  follows that $X$ is a Priestley space.
  
  The clopen upsets of $X$ are the finite subsets of ${\mathbb N}$ and
  the cofinite subsets of $X$ containing $y$ and they form the lattice
  $D$ dual to $X$. Note that $V=\{x\}$ is clopen in $X$ and thus $V\in
  D^-$.  On the other hand, any clopen upset $W$ of $X$ containing $V$
  must be cofinite. We can write

  \[
    V=W-W'
  \]

  \noindent where $W'=W-\{x\}$ is also a clopen upset of $X$. It
  follows that there is no canonical choice for $W$ as
  ${\uparrow}V=\{x,y\}$ is not open and thus not in $D$.

  However, if we look for difference chains for $V$ relative to
  \emph{the lattice of closed subsets of }$X$, then we have a
  canonical choice of difference chain, namely $V=K_1-K_2$ where
  $K_1={\uparrow}V$ and $K_2=K_1-V$. This is a completely general
  phenomenon as we shall see next. Finally, we observe that since $V$
  is a clopen upset but ${\uparrow}V$ is not, $D$ is \emph{not} a
  co-Heyting algebra.
\end{example}

In this section we show that we may write each element of the
Booleanization of a bounded distributive lattice canonically as a
difference of \emph{closed upsets} (cf. Theorem~\ref{t:1}), and that,
in the case of a co-Heyting algebra, this provides a canonical
difference chain of the form~\eqref{eq:11} for each element of its
Booleanization (cf. Corollary~\ref{c:1}).

Recall that a subset $S\subseteq P$ of a poset is said to be
\emph{convex} provided $x\leq y\leq z$ with $x,z\in S$ implies $y\in
S$.

\begin{definition}\label{def:Hausdorff}
  If $P$ is a poset, $S\subseteq P$, and $p\in P$, then we say that
  $p_1<p_2<\dots<p_n$ in $P$ is an \emph{alternating sequence of
    length $n$ for $p$ (with respect to $S$)} provided
  \begin{enumerate}
  \item $p_i\in S$ for each $i\in\{1,\dots,n\}$ which is odd;
  \item $p_i\not\in S$ for each $i\in\{1,\dots,n\}$ which is even;
  \item $p_n=p$.
  \end{enumerate}
  Further, we say that $p\in P$ has \emph{degree} $n$ (with respect to
  $S$), written $\deg_{S}(p) = n$, provided~$n$ is the largest natural
  number $k$ for which there is an alternating sequence of length~$k$
  for~$p$.  In particular, if there is no alternating sequence for $p$
  with respect to $S$ (i.e. if $p\in P-{\uparrow}S$) then $\deg_S(p) =
  0$. Notice that an element of finite degree is of odd degree if and
  only if it belongs to $S$.  Also, if $S$ is convex, then every
  element of $S$ has degree~$1$, while every element of
  ${\uparrow}S-S$ has degree~$2$.
\end{definition}

In general, there will be non-empty subsets of posets with respect to
which no element has finite degree. However, that is not the case for
clopen subsets of Priestley spaces.

\begin{proposition}\label{prop:findeg}
  Let $X$ be a Priestley space and $V$ a clopen subset of $X$. Then
  every element of~$X$ has finite degree with respect to $V$. There
  are elements of degree $0$ if and only if $\,{\uparrow}V\neq X$, and
  the positive degrees achieved with respect to $V$ form an initial
  segment of the positive integers.
\end{proposition}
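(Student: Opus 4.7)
The plan is to stratify $X$ by a decreasing chain of closed upsets tracking which degrees are attained, and to show that this chain eventually becomes empty. Set $X_n = \{p \in X : \deg_V(p) \geq n\}$ and $L_n := {\uparrow}X_n$. Unwinding Definition~\ref{def:Hausdorff} yields the recurrences $X_{2k+1} = V \cap L_{2k}$ and $X_{2k+2} = V^c \cap L_{2k+1}$, starting from $L_0 = X$. Because $V$ and $V^c$ are closed and Proposition~\ref{p:2} says upsets of closed sets are closed, a short induction gives that each $L_n$ is a closed upset; since $L_n$ is itself an upset, $L_{n+1} \subseteq L_n$.

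For (a) I would argue by contradiction: if every $L_n$ is non-empty, then compactness makes $L_\infty := \bigcap_n L_n$ a non-empty closed subset. A standard compactness argument, using the closedness of ${\downarrow}\{p\}$ in a Priestley space together with Proposition~\ref{p:2}, shows that ${\uparrow}$ commutes with decreasing intersections of closed subsets; passing this through the recurrences yields $L_\infty = {\uparrow}(L_\infty \cap V) = {\uparrow}(L_\infty \cap V^c)$. Pick $p \in \min(L_\infty)$, which exists by Proposition~\ref{p:2}. The first identity places some $q \leq p$ in $L_\infty \cap V$, which by minimality must equal $p$, so $p \in V$; symmetrically the second identity gives $p \in V^c$, a contradiction. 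Hence $L_N = \emptyset$ for some $N$, and the recurrences then force $X_n = \emptyset$ for all $n \geq N$, so every point has finite degree.

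Part (b) is immediate from the definition: $\deg_V(p) = 0$ is synonymous with $p \notin {\uparrow}V = L_1$, so a point of degree $0$ exists if and only if ${\uparrow}V \neq X$. For (c), let $N$ be the maximum degree; from $X_N \neq \emptyset$, extracting initial segments of alternating sequences of length $N$ shows $X_k \neq \emptyset$ for each $k \leq N$. Fix such a $k$ and pick $p \in \min(X_k)$, available by Proposition~\ref{p:2}. Since $\deg_V(p)$ has the same parity as $k$, to conclude $\deg_V(p) = k$ it suffices to rule out $p \in X_{k+2}$; were $p$ in $X_{k+2}$, the recurrences would produce $q < p$ with $q \in X_{k+1}$ and then $r \leq q$ with $r \in X_k$, so $r < p$ would lie in $X_k$, contradicting minimality of $p$.

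I expect the main difficulty to be the commutation of ${\uparrow}$ with decreasing intersections of closed sets used in part (a): this is where compactness, the closed order of a Priestley space, and Proposition~\ref{p:2} must all be combined. Once that lemma is in place, everything else is parity bookkeeping handled cleanly by the recurrences.
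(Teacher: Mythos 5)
Your strategy is sound and genuinely different from the paper's, but one of your stated identities is false and needs repair. With $X_n=\{p\in X:\deg_V(p)\ge n\}$ as you define it, the recurrence $X_{2k+1}=V\cap L_{2k}$ already fails at $k=0$: it gives $X_1=V$, whereas by Definition~\ref{def:Hausdorff} we have $X_1={\uparrow}V$ (and, more generally, $X_{2k+1}$ contains every point of degree $2k+2$, and no such point lies in $V$). The sets your recurrences actually compute are the sets $Y_n$ of endpoints of alternating sequences of length exactly $n$; the correct relation is $X_n={\uparrow}Y_n$, i.e.\ $X_{2k+1}={\uparrow}(V\cap L_{2k})$ and $X_{2k+2}={\uparrow}(V^c\cap L_{2k+1})$, which is the content of Theorem~\ref{t:1}. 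The slip turns out to be benign: everything downstream uses only $L_n={\uparparrow}X_n$ — correction: $L_n={\uparrow}X_n$ — and the induced recurrence $L_{n+1}={\uparrow}(V\cap L_n)$ or ${\uparrow}(V^c\cap L_n)$ according to parity is correct either way. With that fixed, your compactness argument for (a) is valid: the commutation of ${\uparrow}$ with codirected intersections of closed sets holds exactly as you sketch (${\downarrow}p$ is closed by total order disconnectedness, and a codirected family of nonempty closed sets in a compact space has nonempty intersection); it is the topological form of Proposition~\ref{prop:overline}\ref{item:5}, recorded in Remark~\ref{rem}. Parts (b) and (c) are fine, though for (c) it is cleaner to note directly that a minimal point $p$ of the nonempty closed set $X_k$ has degree exactly $k$: the $k$-th term of a maximal alternating sequence for $p$ lies in $X_k$ and is strictly below $p$ unless that sequence has length $k$.

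The paper's own proof is far shorter and more elementary: write $V=\bigcup_{i=1}^m(U_i-W_i)$ with $U_i,W_i$ clopen upsets (disjunctive normal form), observe that each $U_i-W_i$ is convex, and apply the pigeonhole principle to bound the length of any alternating sequence by $2m$. That yields an explicit bound tied to a representation of $V$ and uses no topology beyond clopenness of $V$. Your route buys something too: it constructs the canonical chain of closed upsets that the paper only introduces in Theorem~\ref{t:1} and exposes finiteness of degree as a consequence of the fact that a nonempty closed set cannot equal both ${\uparrow}$ of its intersection with $V$ and ${\uparrow}$ of its intersection with $V^c$. But it is heavier, depends on a commutation lemma the paper only establishes in Section~4, and produces no explicit bound.
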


\begin{proof}
  Any element of the Booleanization of a bounded distributive lattice
  $D$ may be written as a finite disjunction of differences of
  elements from $D$ (using disjunctive normal form). Accordingly, if
  $V$ is a clopen subset of a Priestley space $X$, then there is an
  $m$ so that we may write

  \[
    V = \bigcup_{i = 1}^m (U_{i} - W_{i}),
  \]

  \noindent where $U_{i}, W_{i} \subseteq X$ are clopen upsets of
  $X$. In particular, since each $U_{i} - W_{i}$ is convex, by the
  Pigeonhole Principle, there is no alternating sequence with respect
  to $V$ of length strictly greater than $2m$, and thus, every element
  of $X$ has degree at most $2m$ with respect to $V$.

  Also, picking an alternating sequence for an element $x\in X$ whose
  length is the maximum possible, it is clear that the $k$-th element
  of such a sequence has degree $k$. Thus the set of positive degrees
  that are achieved form an initial segment of the positive
  integers. Finally, $x\in X$ has degree~$0$ if and only if
  $x\not\in{\uparrow}V$.
\end{proof}

\begin{lemma}\label{lem:deg1,2}
  Let $X$ be a Priestley space and $V$ a clopen subset of $X$. Define
  subsets of $X$ as follows:
  
  \[
    K_1 = {\uparrow}V, \quad K_2={\uparrow}({\uparrow}V-V).
  \]

  \noindent Then, for each $i\in\{1,2\}$, $K_i$ is closed and

  \[
    K_i=\{x\in X\mid \deg_V(x)\geq i\}={\uparrow}\{x\in X\mid
    \deg_V(x)=i\}.
  \]
\end{lemma}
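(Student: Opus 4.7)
The plan is to build the proof around Proposition~\ref{p:2}, which simultaneously gives that ${\uparrow}K$ is closed and that ${\uparrow}K={\uparrow}\min(K)$ for any closed $K\subseteq X$. First I would verify that $K_1$ and $K_2$ are closed. Since $V$ is clopen, hence closed, Proposition~\ref{p:2} yields that $K_1={\uparrow}V$ is closed. Then ${\uparrow}V-V=K_1\cap(X\setminus V)$ is an intersection of two closed sets (using that $V$ is also open), hence closed, and another application of Proposition~\ref{p:2} gives closedness of $K_2$.

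Next I would prove $K_i=\{x\in X \mid \deg_V(x)\geq i\}$ by direct unpacking of Definition~\ref{def:Hausdorff}. For $i=1$: every alternating sequence for $x$ begins with some $p_1\in V$ and terminates at $x$, so $\deg_V(x)\geq 1$ is equivalent to $x\in{\uparrow}V$. For $i=2$: from a length-$\geq 2$ alternating sequence for $x$ one extracts its first two terms $p_1<p_2\leq x$ with $p_1\in V$ and $p_2\notin V$; then $p_2\in{\uparrow}V-V$ and so $x\in K_2$. Conversely, if $x\in K_2$, pick $w\in{\uparrow}V-V$ with $w\leq x$ and $v\in V$ with $v\leq w$ (necessarily $v<w$). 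A short case split on whether $x=w$, or $w<x$ with $x\notin V$, or $w<x$ with $x\in V$, produces an alternating sequence of length $2$ or $3$ ending at $x$, so $\deg_V(x)\geq 2$.

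For the equality $K_i={\uparrow}\{x\mid\deg_V(x)=i\}$, I would invoke Proposition~\ref{p:2} again to write $K_1={\uparrow}\min({\uparrow}V)$ and $K_2={\uparrow}\min({\uparrow}V-V)$, and then check that the minimal elements have the correct degree. If $x_0\in\min({\uparrow}V)$, any $v\in V$ with $v\leq x_0$ must equal $x_0$ by minimality of $x_0$ in ${\uparrow}V$, so $x_0\in V$ and $\deg_V(x_0)\geq 1$; a hypothetical length-$\geq 2$ sequence ending at $x_0$ would contain a term $p_1\in V\subseteq{\uparrow}V$ with $p_1<x_0$, contradicting minimality. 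If $y\in\min({\uparrow}V-V)$, then $y\notin V$ and some $v\in V$ gives $v<y$, so $\deg_V(y)\geq 2$; any sequence of length $\geq 3$ ending at $y$ must have even length (since $y\notin V$), hence length $\geq 4$, which forces an intermediate term $p_2\in{\uparrow}V-V$ strictly below $y$, again contradicting minimality. The reverse inclusion ${\uparrow}\{x\mid\deg_V(x)=i\}\subseteq K_i$ follows immediately from the first equality together with the fact that $K_i$ is an upset.

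The main obstacle will be the careful bookkeeping in the ``degree equals $i$'' half: one must track the parity of alternating sequences to exclude degrees strictly larger than $i$, and it is precisely here that the minimality extraction afforded by Proposition~\ref{p:2} does the essential work.
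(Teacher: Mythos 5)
Your proof is correct and follows essentially the same route as the paper's: both rest on Proposition~\ref{p:2} (${\uparrow}K={\uparrow}\min(K)$ is closed for closed $K$) together with the computation that elements of $\min(V)$ have degree $1$ and elements of $\min({\uparrow}V-V)$ have degree $2$. You spell out the inclusion $\{x\mid\deg_V(x)\geq i\}\subseteq K_i$ more explicitly than the paper does, and you sidestep the paper's appeal to monotonicity of $\deg_V$ by using that $K_i$ is an upset, but these are presentational differences, not a different argument.
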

\begin{proof}
  By Proposition~\ref{prop:findeg}, every element of $X$ has a finite
  degree. Also if $x\leq y$, then it is clear that
  $\deg_V(x)\leq\deg_V(y)$.  Furthermore, by Proposition~\ref{p:2}, we
  have that ${\uparrow}K= {\uparrow}\min(K)$ for any closed set
  $K$. Now since both $V$ and ${\uparrow}V-V$ are closed, it suffices
  to show that the elements of $\min(V)$ have degree $1$, and the
  elements of $\min({\uparrow}V-V)$ have degree $2$. Note that it is
  clear that $\deg_V(x)=1$ for any $x\in\min(V)$. Now suppose
  $x\in\min({\uparrow}V-V)$. Since $x\in {\uparrow}V$, there is $x'\in
  V$ with $x'\leq x$. Since $x\not\in V$, this is an alternating
  sequence of length $2$ for $x$. On the other hand, if $x_1< x_2
  <\dots < x_n=x$ is an alternating sequence for $x$, then
  $x_2\in{\uparrow}V-V$ and thus $x\not\in\min({\uparrow}V-V)$ unless
  $n=2$ and $x_2=x$. Thus $\deg_V(x)=2$ for any
  $x\in\min({\uparrow}V-V)$.
\end{proof}

\begin{corollary}\label{c:2}
  Let $X$ be a Priestley space, $V$ a clopen subset of $X$, and
  $G_1\supseteq G_2\supseteq\dots \supseteq G_{2p}$ a sequence of
  closed upsets in $X$ satisfying
 
  \begin{equation}
    V=G_1-(G_2-(\dots-(G_{2p-1}-G_{2p}){\dots})).\label{eq:15}
  \end{equation}
  
  \noindent If $K_1$ and $K_2$ are as defined in
  Lemma~\ref{lem:deg1,2}, then

  \[
    K_1\subseteq G_1,\quad K_2\subseteq G_2\ \text{ and }\
    G_1-G_2\subseteq K_1-K_2.
  \]
\end{corollary}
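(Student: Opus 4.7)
The plan is to reduce everything to the disjoint-union reformulation of the difference chain provided by Proposition~\ref{l:2}, which in the ambient Boolean algebra $\mathcal{P}(X)$ gives
\[
  V \;=\; \bigcup_{n=1}^{p}\bigl(G_{2n-1}-G_{2n}\bigr),
\]
with the joinands pairwise disjoint. From this decomposition, all three containments should drop out by elementary set-theoretic manipulations, using only that the $G_i$ form a decreasing chain of \emph{upsets}.

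First I would dispatch $K_1 \subseteq G_1$: from the equation $V = G_1 - R$ with $R\subseteq G_2\subseteq G_1$ we get $V\subseteq G_1$, and since $G_1$ is an upset, $K_1 = {\uparrow}V \subseteq G_1$. Next, for $K_2 \subseteq G_2$, the key is to show ${\uparrow}V - V \subseteq G_2$ (after which taking the upset preserves containment, because $G_2$ is an upset). Given $x \in {\uparrow}V - V$, pick $y\in V$ with $y\leq x$; then $y \in V \subseteq G_1$, so $x \in G_1$ by upward closure. Using the disjoint decomposition, $x\notin V$ forces $x\notin G_1 - G_2$, and combined with $x\in G_1$ this gives $x\in G_2$, as desired.

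Finally, for the containment $G_1 - G_2 \subseteq K_1 - K_2$, I would argue: the $n=1$ joinand of the disjoint decomposition gives $G_1 - G_2 \subseteq V \subseteq K_1$, while the already established $K_2 \subseteq G_2$ gives $(G_1 - G_2) \cap K_2 \subseteq (G_1 - G_2) \cap G_2 = \emptyset$. Combining these yields $G_1 - G_2 \subseteq K_1 - K_2$.

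There is no real obstacle here; the entire content of the corollary is unlocked by passing from the nested-difference form to the disjoint-union form via Proposition~\ref{l:2}. The only subtle point worth being careful about is that the arguments use the hypothesis that each $G_i$ is an \emph{upset} in exactly two places (to transfer the containment $V\subseteq G_1$ up to $K_1$, and to transfer ${\uparrow}V - V \subseteq G_2$ up to $K_2$), and the decreasing hypothesis $G_i\supseteq G_{i+1}$ to ensure $R\subseteq G_1$ so that $V \subseteq G_1$.
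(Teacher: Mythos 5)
Your proof is correct and follows essentially the same route as the paper's: both derive $V\subseteq G_1$ and $G_1-G_2\subseteq V$ from the difference chain, push these through ${\uparrow}(\ )$ using that $G_1$ and $G_2$ are upsets, and then combine $G_1-G_2\subseteq V\subseteq K_1$ with $K_2\subseteq G_2$ to get the last containment. The only cosmetic difference is that you invoke Proposition~\ref{l:2} for the full disjoint-union decomposition, whereas the paper extracts $G_1-G_2\subseteq V$ directly from the nested form.
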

\begin{proof}
  By~\eqref{eq:15}, we have $V\subseteq G_1$. Also, since $G_1$ is an
  upset we have $K_1={\uparrow}V\subseteq G_1$. Now, since $G_1-G_2
  \subseteq V$ we have $G_1-V \subseteq G_2$ and as $G_2$ is an upset,
  it follows that ${\uparrow}(G_1-V) \subseteq G_2$. Also, $K_1
  \subseteq G_1$ implies $K_2 = {\uparrow}(K_1-V) \subseteq
  {\uparrow}(G_1-V)$ and thus, $K_2 \subseteq G_2$.  In particular, we
  have $G_1 - G_2 \subseteq V-K_2 \subseteq K_1 - K_2$.
\end{proof}

An iteration of Lemma~\ref{lem:deg1,2} leads to the main result of
this section.

\begin{theorem}\label{t:1}
  Let $X$ be a Priestley space and $V$ a clopen subset of $X$. Define
  a sequence of subsets of ${\uparrow}V$ as follows:
 
  \[K_1 = {\uparrow} V, \qquad K_{2i} = {\uparrow}(K_{2i-1} - V),
    \quad \text{and}\qquad K_{2i+1} = {\uparrow} (K_{2i} \cap V),\]
  
  \noindent for $i \ge 1$ (see Figure~\ref{fig:alt}).
  Then, $K_1\supseteq K_2\supseteq\dots$ is a decreasing sequence of
  closed upsets of $X$ and, for every $n \ge 1$, we have
  
  \begin{equation}
    K_n = \{x \in X \mid \deg_V(x) \ge n\} = {\uparrow}\{x \in X \mid
    \deg_V(x) = n\}.\label{eq:2}
  \end{equation}
  
  \noindent In particular,

  \begin{equation}
    V = \bigcup_{i = 1}^{m} (K_{2i-1}-K_{2i}) = K_1-(K_2-
    (\dots(K_{2m-1}- K_{2m})
    \hspace*{-1pt}.\hspace*{-1pt}. \hspace*{-1pt}.\hspace*{-1pt})),\label{eq:4}
  \end{equation}
  
  \noindent where $2m-1 = \max\{\deg_V(x) \mid x \in V\}$.
\end{theorem}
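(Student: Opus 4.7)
The plan is to reduce the entire theorem to proving the degree characterization~\eqref{eq:2} by induction on $n$, after which the decreasing chain property and the decomposition~\eqref{eq:4} follow almost formally. The base cases $n=1,2$ are exactly the content of Lemma~\ref{lem:deg1,2}, so the inductive step is where the work lies. I would handle the two parities by symmetric arguments: for the step producing $K_{2i+1} = {\uparrow}(K_{2i}\cap V)$, the inductive hypothesis gives that $K_{2i}$ is closed, hence so is $K_{2i}\cap V$ since $V$ is clopen, and Proposition~\ref{p:2} then yields both closedness of $K_{2i+1}$ as an upset and the convenient rewriting $K_{2i+1} = {\uparrow}\min(K_{2i}\cap V)$. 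The step producing $K_{2i+2}$ is symmetric, using that $K_{2i+1}-V = K_{2i+1}\cap(X-V)$ is closed.

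The degree equality itself splits into two inclusions. For the first, $K_{2i+1} = \{x\mid \deg_V(x)\geq 2i+1\}$, the forward containment uses that any $z\in K_{2i}\cap V$ has $\deg_V(z)\geq 2i$ by induction and is itself in $V$, forcing its degree to be odd and hence $\geq 2i+1$, after which monotonicity of $\deg_V$ propagates this bound to the whole upset. The reverse containment is obtained by picking, for any $y$ of degree $\geq 2i+1$, the element $x_{2i+1}$ at the $(2i+1)$-st slot of a longest alternating sequence ending at $y$ and observing that $x_{2i+1}\in K_{2i}\cap V$ by the inductive hypothesis. For the second equality ${\uparrow}\{x\mid \deg_V(x)=2i+1\} = K_{2i+1}$, the containment $\subseteq$ is trivial; the reverse, which I expect to be the main obstacle, requires showing that every minimal element of $K_{2i}\cap V$ already has degree exactly $2i+1$. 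I would argue by contradiction: a hypothetical such element of degree $\geq 2i+2$ would, via its alternating sequence, produce a strictly smaller $x_{2i+1}$ lying in $K_{2i}\cap V$, violating minimality. The even-index case is entirely analogous, with the roles of $V$ and $X-V$ interchanged.

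The remaining assertions are then routine. The inclusions $K_{n+1}\subseteq K_n$ follow directly from the defining formula, since each $K_{n+1}$ is the upset of a subset of the upset $K_n$. From~\eqref{eq:2} one reads off $K_{2i-1}-K_{2i} = \{x\mid \deg_V(x)=2i-1\}$, and these strata are pairwise disjoint; since $2m-1 = \max\{\deg_V(x)\mid x\in V\}$, a point of $X$ lies in $V$ iff its degree is an odd number in $\{1,\ldots,2m-1\}$, so $V = \bigcup_{i=1}^m (K_{2i-1}-K_{2i})$ as a disjoint union. Finally, Proposition~\ref{l:2} applied inside the Boolean algebra $\cP(X)$ rewrites this disjoint union as the nested alternating difference displayed in~\eqref{eq:4}, finishing the proof.
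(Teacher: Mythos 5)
Your proof is correct and follows the same overall strategy as the paper's: induction on $n$ with Lemma~\ref{lem:deg1,2} as the base case, followed by the parity observation and Proposition~\ref{l:2} to pass from the degree description~\eqref{eq:2} to the decomposition~\eqref{eq:4}. The only real difference is in how the inductive step is executed. The paper observes that $K_{2i}$ is itself a Priestley space (being closed), that $V'=K_{2i}\cap V$ is clopen in it, and that $K_{2i+1}$, $K_{2i+2}$ are exactly the sets $K_1$, $K_2$ of Lemma~\ref{lem:deg1,2} for this relativized pair; it then concludes via the degree-shift identity $\deg_V(x)=\deg_{V'}(x)+2i$ for $x\in K_{2i}$, which it states without proof. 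You instead re-run the alternating-sequence and minimality arguments directly in $X$: truncating a long alternating sequence at slot $2i+1$ (resp.\ $2i+2$) for the reverse inclusions, using parity of $\deg_V$ on $V$ for the forward ones, and a minimality contradiction together with Proposition~\ref{p:2} for the second equality in~\eqref{eq:2}. Your version is slightly longer but entirely self-contained, essentially inlining the degree-shift lemma that the paper leaves to the reader; the paper's relativization is more economical but defers that combinatorial content. Both are valid, and all the individual steps you sketch (closedness of $K_{2i}\cap V$ and $K_{2i+1}-V$, monotonicity of $\deg_V$, disjointness of the strata $K_{2i-1}-K_{2i}$) check out.
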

\begin{proof}
  First we note that if (\ref{eq:2}) holds, then (\ref{eq:4}) holds
  since $K_{2i-1}-K_{2i}$ will consist precisely of the elements of
  $V$ of degree $2i-1$ and since each element of $V$ has an odd degree
  less than or equal to the maximum degree achieved in $V$.  For the
  first statement and for (\ref{eq:2}), the proof proceeds by
  induction on the parameter $i$ used in (\ref{eq:4}). For $i=1$, we
  have $K_1=K_{2i-1}={\uparrow} V$ and $K_2=K_{2i}=
  {\uparrow}(K_{2i-1} - V)={\uparrow}({\uparrow} V - V)$ and thus
  $K_1\supseteq K_2$ are closed sets satisfying (\ref{eq:2}) by
  Lemma~\ref{lem:deg1,2}.

  For the inductive step, suppose the statements hold for $n\leq 2i$
  and notice that $K_{2i+1}={\uparrow} (K_{2i} \cap V)$ and
  $K_{2i+2}={\uparrow}(K_{2i+1} - V)$ are in fact the sets $K_1$ and
  $K_2$ of Lemma~\ref{lem:deg1,2} when we apply it to the Priestley
  space $X'=K_{2i}$ and its clopen subset $V'=K_{2i} \cap V$.
  Thus, to complete the proof, it suffices to notice that, for every
  $x \in X'$, we have $\deg_{V}(x) = \deg_{V'}(x) + 2n$.
\end{proof}
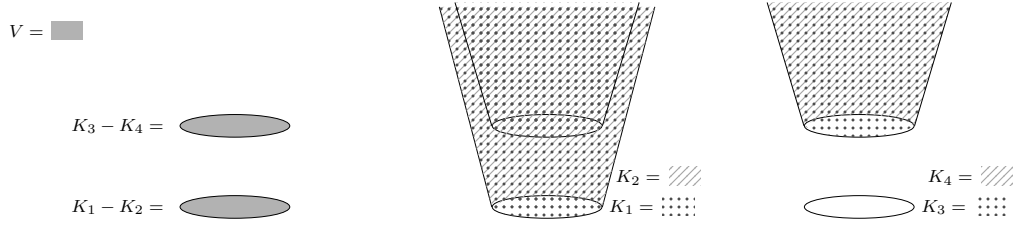
\begin{figure}
  \centering \scalebox{.83}{\begin{tikzpicture} \node[ellipse, draw =
      black,minimum width=50pt, fill = black!30] (1) at (0,0) {};
      \node[ellipse, draw = black,minimum width=50pt, fill = black!30,
      xshift = -10mm] (2) at (1,1.3) {};
      \node[left of = 1, xshift = -25pt]{\scriptsize$K_1-K_2 =$};
      \node[left of = 2, xshift = -25pt]{\scriptsize$K_3-K_4 =$};
      \node[ellipse, draw = black,minimum width=50pt] (11) at (5,0)
      {};
      \node[ellipse, draw = black,minimum width=50pt, pattern = dots,
      pattern color = black!70, xshift = -10mm] (12) at (6,1.3) {};
      \node[ellipse, minimum width=50pt, pattern = dots, pattern color
      = black!70] at (5,0) {};
      \node[above left of = 11, xshift = -30pt, yshift = 75pt] (inv1)
      {}; \node[above right of = 11, xshift = 30pt, yshift = 75pt]
      (inv2) {};
      \draw[black] (11.west) -- (inv1); \draw[black] (11.east) to
      (inv2);
      \node[above left of = 12, xshift = -23pt, yshift = 40pt] (inv3)
      {}; \node[above right of = 12, xshift = 23pt, yshift = 40pt]
      (inv4) {};
      \path[fill = black!30, pattern = north east lines, pattern color = black!35]
      (11.west) -- (inv1) -- (inv2) -- (11.east) -- (11.north east) --
      (11.north) -- (11.north west) -- (11.west) -- (inv1);
      \path[fill = black!30, pattern = dots, pattern color = black!70]
      (12.west) -- (inv3) -- (inv4) -- (12.east) -- (12.west) --
      (inv3);
      \path[fill = black!30, pattern = dots, pattern color = black!70]
      (11.west) -- (inv1) -- (inv2) -- (11.east) -- (11.west) --
      (inv1);
      \draw[black] (12.west) to (inv3); \draw[black] (12.east) to
      (inv4);
      \node[ellipse, draw = black,minimum width=50pt] (21) at (10,0)
      {}; \node[ellipse, draw = black,minimum width=50pt, xshift =
      -10mm] (22) at (11,1.3) {};
      \node[above left of = 21, xshift = -30pt, yshift = 75pt] (inv21)
      {}; \node[above right of = 21, xshift = 30pt, yshift = 75pt]
      (inv22) {};
      \node[above left of = 22, xshift = -23pt, yshift = 40pt] (inv23)
      {}; \node[above right of = 22, xshift = 23pt, yshift = 40pt]
      (inv24) {};
      \path[fill = black!30, pattern = north east lines, pattern color = black!35]
      (22.west) -- (inv23) -- (inv24) -- (22.east) -- (22.north east)
      -- (22.north) -- (22.north west) -- (22.west) -- (inv23);
      \path[fill = black!30, pattern = dots, pattern color = black!70]
      (22.west) -- (inv23) -- (inv24) -- (22.east) -- (22.south east)
      -- (22.south) -- (22.south west) -- (22.west) -- (inv23);
      \draw[black] (22.west) to (inv23); \draw[black] (22.east) to
      (inv24);
      \node[right of = 11,xshift = 10pt](K1){\scriptsize $K_1 = $};
      \node[right of = K1, rectangle, pattern = dots, pattern color =
      black!70, xshift = -7pt, yshift = .5pt, minimum height = 3mm,
      minimum width=5mm]{};
      \node[above of = K1,xshift = 3pt, yshift =
      -15pt](K2){\scriptsize $K_2 = $}; \node[right of = K2,
      rectangle, pattern = north east lines, pattern color = black!35,
      xshift = -7pt, yshift = .5pt, minimum height = 3mm, minimum
      width=5mm ]{};
      \node[right of = 21,xshift = 10pt](K3){\scriptsize $K_3 = $};
      \node[right of = K3, rectangle, pattern = dots, pattern color =
      black!70, xshift = -7pt, yshift = .5pt, minimum height = 3mm,
      minimum width=5mm]{};
      \node[above of = K3,xshift = 3pt, yshift =
      -15pt](K4){\scriptsize $K_4 = $}; \node[right of = K4,
      rectangle, pattern = north east lines, pattern color = black!35,
      xshift = -7pt, yshift = .5pt, minimum height = 3mm, minimum
      width=5mm]{};
      \node[above of = 2,xshift = -95pt, yshift = 15pt](V){\scriptsize
        $V = $}; \node[right of = V, rectangle, minimum height = 3mm,
      minimum width=5mm, xshift = -10pt, fill = black!30]{};
    \end{tikzpicture}}
  \caption{Graphical representation of the sequence defined in
    Theorem~\ref{t:1}.}
  \label{fig:alt}
\end{figure}

The corresponding generalization of Corollary~\ref{c:2} goes as
follows, and it proves the canonicity of writing~\eqref{eq:4}.

\begin{proposition}\label{p:5}
  Let $X$ be a Priestley space and $V\subseteq X$ be a clopen subset
  of $X$. Let $G_1 \supseteq G_2 \supseteq \dots \supseteq G_{2p}$ be
  a decreasing sequence of closed upsets of $X$ satisfying
 
  \begin{equation}
    \label{eq:18}
    V = \bigcup_{i = 1}^{p} (G_{2i-1}-G_{2i}) = G_1-(G_2-
    (\dots(G_{2p-1}- G_{2p})
    \hspace*{-1pt}.\hspace*{-1pt}. \hspace*{-1pt}.\hspace*{-1pt})).
  \end{equation}
  
  \noindent Then, taking $(K_i)_{i \ge 1}$ as defined in
  Theorem~\ref{t:1}, we have $p \ge m$ and, for every $n \in \{1,
  \dots, p\}$, the following inclusions hold:
 
  \begin{equation}
    K_{2n-1}\subseteq G_{2n-1},\ K_{2n} \subseteq G_{2n}, \
    \text{and} \ \bigcup_{i = 1}^n (G_{2i-1}-G_{2i}) \ \subseteq\
    \bigcup_{i = 1}^n (K_{2i-1} - K_{2i}).\label{eq:5}
  \end{equation}
  
\end{proposition}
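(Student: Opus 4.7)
The plan is to prove the three inclusions by induction on $n$ for fixed $p$, with Corollary~\ref{c:2} supplying the base case $n = 1$ and a reduction to a Priestley subspace carrying out the inductive step; the bound $p \geq m$ will be argued separately via a short combinatorial observation on alternating sequences.

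For the inductive step, suppose the three inclusions hold at $n$; in particular $K_{2n} \subseteq G_{2n}$. The key move is to restrict everything to $Y := K_{2n}$, which by Theorem~\ref{t:1} is a closed upset of $X$ and hence itself a Priestley space. Set $V' := V \cap Y$ and $G'_j := G_{2n+j} \cap Y$ for $j = 1, \dots, 2(p-n)$; these form a decreasing sequence of closed upsets of $Y$. Using the disjoint decomposition $V = \bigsqcup_{i=1}^p (G_{2i-1} - G_{2i})$ given by Proposition~\ref{l:2}, the summand indexed by $i \leq n$ meets $G_{2n}$ trivially (because $G_{2i} \supseteq G_{2n}$) while the summand indexed by $i \geq n+1$ is contained in $G_{2n}$ (because $G_{2i-1} \subseteq G_{2n}$); intersecting further with $Y \subseteq G_{2n}$ yields $V' = \bigcup_{j=1}^{p-n}(G'_{2j-1} - G'_{2j})$, so the hypotheses of Corollary~\ref{c:2} are satisfied inside~$Y$.

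Applying the base case inside $Y$ gives $K_1' \subseteq G_1'$, $K_2' \subseteq G_2'$, and $G_1' - G_2' \subseteq K_1' - K_2'$, where $K_1'$ and $K_2'$ are the Theorem~\ref{t:1} sets for $V'$ computed in $Y$. Since $Y$ is an upset of $X$, upsets taken in $Y$ agree with upsets taken in $X$ on subsets of $Y$; this identifies $K_1' = {\uparrow}(K_{2n} \cap V) = K_{2n+1}$ and $K_2' = {\uparrow}(K_{2n+1} - V) = K_{2n+2}$. Therefore $K_{2n+1} \subseteq G_{2n+1}$, $K_{2n+2} \subseteq G_{2n+2}$, and $(G_{2n+1} - G_{2n+2}) \cap K_{2n} \subseteq K_{2n+1} - K_{2n+2}$. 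The remaining portion $(G_{2n+1} - G_{2n+2}) - K_{2n}$ sits inside $V - K_{2n}$, which by~\eqref{eq:2} and~\eqref{eq:4} equals $\bigcup_{i=1}^n (K_{2i-1} - K_{2i})$; together with the inductive hypothesis this closes the third inclusion.

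For $p \geq m$, pick $x \in V$ of maximal degree $2m-1$ together with an alternating sequence $p_1 < p_2 < \cdots < p_{2m-1} = x$. By Proposition~\ref{l:2}, each $p_{2j-1} \in V$ lies in a unique piece $G_{2i_j - 1} - G_{2i_j}$ with $i_j \in \{1, \dots, p\}$. Since $G_{2i_j - 1}$ is an upset containing $p_{2j-1}$, it also contains $p_{2j}$; but $p_{2j} \notin V$ forces $p_{2j} \in G_{2i_j}$, and hence $p_{2j+1} \in G_{2i_j}$ by a further upset argument. Combined with $p_{2j+1} \notin G_{2i_{j+1}}$, this yields $i_{j+1} > i_j$, so the $i_j$'s are strictly increasing in $\{1, \dots, p\}$ and $p \geq m$. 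The main obstacle will be the subspace identification $K_1' = K_{2n+1}$ and $K_2' = K_{2n+2}$, which crucially uses that $K_{2n}$ is an upset of $X$; once that point is in place, the rest of the argument is essentially bookkeeping around the disjoint decomposition of Proposition~\ref{l:2}.
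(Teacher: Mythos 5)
Your proposal is correct and follows essentially the same route as the paper: induction on $n$ with Corollary~\ref{c:2} as the base case, restriction to the Priestley subspace $K_{2n}$ for the inductive step, and the split of $G_{2n+1}-G_{2n+2}$ along $K_{2n}$ to close the third inclusion. The only (harmless) divergences are that you discard the first $2n$ pieces on $K_{2n}$ directly from $G_{2i}\supseteq G_{2n}\supseteq K_{2n}$ rather than from the induction hypothesis on unions, and that you supply an explicit alternating-sequence argument for $p\ge m$, which the paper leaves implicit as a consequence of the inclusion $V\subseteq\bigcup_{i=1}^p(K_{2i-1}-K_{2i})$.
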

\begin{proof}
  We proceed by induction on $n$. The case $n = 1$ is the content of
  Corollary~\ref{c:2}. Now suppose that (\ref{eq:5}) holds for a
  certain $n \in\{1, \dots, p\}$. As in the proof of
  Theorem~\ref{t:1}, we consider the new Priestley space $X' = K_{2n}$
  and its clopen subset $V' = X' \cap V$.
  Then, setting $G_i' = X' \cap G_i$ for each $i\in \{1, \dots, p\}$,
  we obtain a decreasing sequence of clopen subsets of $X'$ that form
  a difference chain for $V'$. However, notice that by the induction
  hypothesis we have
  
  \[
    \bigcup_{i = 1}^{n} (G'_{2i-1}-G'_{2i}) = \left(\bigcup_{i =
        1}^{n} (G_{2i-1}-G_{2i}) \right)\cap K_{2n} \subseteq
    \left(\bigcup_{i = 1}^n (K_{2i-1} - K_{2i}) \right)\cap
    K_{2n}=\emptyset
  \]  
  
  \noindent so that the first $2n$ sets do not contribute to the
  writing of $V'$ as a difference.  It follows that the sequence
  $G'_{2n+1} \supseteq G'_{2n+2} \supseteq \dots \supseteq G'_{2p}$ is
  also a difference chain of closed upsets of $X'$ for $V'$.  Now
  applying Corollary~\ref{c:2} to this sequence, we see that
  
  \[
    K_{2n+1}={\uparrow}(K_{2n}\cap V)={\uparrow} V'\subseteq
    G'_{2n+1}\subseteq G_{2n+1}
  \]
 
  \noindent and
 
  \[
    K_{2n+2}={\uparrow}(K_{2n+1}- V)={\uparrow}({\uparrow}
    V'-V')\subseteq G'_{2n+2} \subseteq G_{2n+2}.
  \]
  
  \noindent We also obtain that $(G_{2n+1}-G_{2n+2})\cap
  K_{2n}=G'_{2n+1}-G'_{2n+2} \subseteq K_{2n+1}- K_{2n+2}$.  On the
  other hand, by Theorem~\ref{t:1}, we have

  \[
    (G_{2n+1}-G_{2n+2})- K_{2n}\subseteq V- K_{2n}=\bigcup_{i = 1}^n
    (K_{2i-1} - K_{2i}).
  \]

  \noindent We thus conclude that

  \[
    \bigcup_{i = 1}^{n+1} (G_{2i-1}-G_{2i}) \ \subseteq\ \bigcup_{i =
      1}^{n+1} (K_{2i-1} - K_{2i})
  \]

  \noindent as required for the inductive step.
\end{proof}

We are now ready to derive the normal form for elements of $D^-$
whenever $D$ is a co-Heyting algebra. This is a straightforward
consequence of Theorem~\ref{thm:co-esak}, Theorem~\ref{t:1} and
Proposition~\ref{p:5}.

\begin{corollary}\label{c:1}
  Let $D$ be a co-Heyting algebra and $b\in D^-$. Define a sequence of
  elements in $ D$ (recall Proposition~\ref{prop:ceiling}) as follows:

  \[
    a_1 = \lceil b \rceil, \qquad a_{2i} = \lceil a_{2i-1} - b \rceil,
    \qquad \text{and} \qquad a_{2i+1} = \lceil a_{2i} \wedge b \rceil,
  \]

  \noindent for $i \ge 1$.  Then, the sequence $\{a_i\}_{i \ge 0}$ is
  decreasing, and there exists $m \ge 1$ such that $a_{2m + 1} = 0$
  and
  
  \begin{equation}
    b = a_1-(a_2-(\dots (a_{2m-1}-a_{2m})
    \hspace*{-1pt}.\hspace*{-1pt}.\hspace*{-1pt}.\hspace*{-1pt})).
    \label{eq:8}
  \end{equation}
  
  Moreover, for every other writing
  
  \[b = c_1 - (c_2 - (\dots (c_{2p-1} - c_{2p})\dots))\]
  
  \noindent as a difference chain with $c_1\ge \dots \ge c_{2p}$ in
  $D$, we have $p \ge m$, $c_i \ge a_i$ for $i \in \{1, \dots, 2p\}$,
  and for each $n\leq m$ we have $\bigvee_{i = 1}^n (c_{2i-1}-c_{2i})
  \leq \bigvee_{i = 1}^n (a_{2i-1} - a_{2i})$.
\end{corollary}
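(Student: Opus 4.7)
The plan is to translate the claim into Priestley duality and then combine Theorems~\ref{thm:co-esak} and~\ref{t:1} with Proposition~\ref{p:5}. Let $X$ be the Priestley dual space of $D$ and let $V=\widehat{b}$, which is clopen in $X$. Because $D$ is a co-Heyting algebra, Theorem~\ref{thm:co-esak} says that $\uparrow W$ is clopen whenever $W\subseteq X$ is clopen, and that the ceiling map $\lceil\ \rceil\colon D^-\to D$ corresponds under the Stone isomorphism $a\mapsto\widehat{a}$ to the operator $W\mapsto\uparrow W$ on clopens.

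Apply Theorem~\ref{t:1} to $V$ to obtain the decreasing sequence $K_1\supseteq K_2\supseteq\dots$ of closed upsets of $X$ realizing $V$ as a canonical difference chain. The key step is an induction showing that, under the co-Heyting hypothesis, each $K_i$ is in fact clopen: $K_1=\uparrow V$ is clopen by Theorem~\ref{thm:co-esak}; assuming $K_{2i-1}$ is clopen, the set $K_{2i-1}-V$ is clopen, so $K_{2i}=\uparrow(K_{2i-1}-V)$ is clopen; similarly $K_{2i+1}=\uparrow(K_{2i}\cap V)$ is clopen. Transporting through $\widehat{\ }^{-1}$, the inductive definition of the $K_i$ becomes precisely the inductive definition of the $a_i$ given in the statement, so $K_i=\widehat{a_i}$ for all $i$. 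Monotonicity of the duality then gives that $\{a_i\}$ is decreasing. For termination, Proposition~\ref{prop:findeg} provides a maximum degree $2m-1=\max\{\deg_V(x)\mid x\in V\}$, so by~\eqref{eq:2} of Theorem~\ref{t:1} we have $K_{2m+1}=\uparrow\{x\mid\deg_V(x)=2m+1\}=\emptyset$, i.e., $a_{2m+1}=0$; the difference-chain identity~\eqref{eq:4} translates to~\eqref{eq:8}.

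For the minimality assertion, suppose $b=c_1-(c_2-(\dots(c_{2p-1}-c_{2p})\dots))$ with $c_1\geq\dots\geq c_{2p}$ in $D$. Set $G_i=\widehat{c_i}$; these form a decreasing sequence of clopen (hence closed) upsets of $X$ realizing $V$ as in~\eqref{eq:18}. Proposition~\ref{p:5} then yields $p\geq m$, $K_{i}\subseteq G_{i}$ for all $i\in\{1,\dots,2p\}$, and $\bigcup_{i=1}^{n}(G_{2i-1}-G_{2i})\subseteq\bigcup_{i=1}^{n}(K_{2i-1}-K_{2i})$ for each $n\leq m$. Translating these inclusions through $\widehat{\ }^{-1}$ (which preserves order, meets, joins, and Boolean differences) gives $c_i\geq a_i$ and $\bigvee_{i=1}^{n}(c_{2i-1}-c_{2i})\leq\bigvee_{i=1}^{n}(a_{2i-1}-a_{2i})$, as required.

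The main obstacle, really the only non-bookkeeping step, is the inductive verification that each $K_i$ is clopen: this is exactly where co-Heytingness enters, via Theorem~\ref{thm:co-esak}. Once established, the remainder is a routine transport of Theorem~\ref{t:1} and Proposition~\ref{p:5} across the duality.
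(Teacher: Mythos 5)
Your proposal is correct and follows exactly the route the paper intends: the paper itself derives Corollary~\ref{c:1} as a direct consequence of Theorem~\ref{thm:co-esak}, Theorem~\ref{t:1} and Proposition~\ref{p:5}, and your write-up supplies precisely the details (notably the induction showing each $K_i$ is clopen, which is the one place the co-Heyting hypothesis enters). No discrepancies to report.
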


Notice that every finite distributive lattice is a co-Heyting algebra,
so that the above corollary applies to any finite distributive
lattice. Combined with the fact that every bounded distributive
lattice is the direct limit of its finite sublattices and that the
Booleanization is the direct limit of the Booleanizations of these
finite bounded sublattices, we also have the following.

\begin{corollary}\label{cor:difchains}
  Every Boolean element over any bounded distributive lattice may be
  written as a difference chain of elements of the lattice.
\end{corollary}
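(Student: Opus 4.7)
The plan is to reduce the general statement to the finite case, where Corollary~\ref{c:1} applies. Given $b \in D^-$, by the disjunctive normal form for Boolean elements over $D$, there exist finitely many elements $a_1, \dots, a_k \in D$ such that $b$ is a Boolean combination of them. Let $D_0$ denote the bounded sublattice of $D$ generated by $\{a_1, \dots, a_k\}$. A standard fact (a consequence of the disjunctive normal form) is that any finitely generated bounded distributive lattice is finite; hence $D_0$ is finite. In a finite lattice all meets exist, so the formula $a/b = \bigwedge\{c \mid a \leq b \vee c\}$ always yields an element of $D_0$, and the required adjunction property is immediate. Therefore $D_0$ is a co-Heyting algebra.

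Next I would transfer $b$ to the Booleanization of $D_0$. The composite $D_0 \hookrightarrow D \hookrightarrow D^-$ factors, via the universal property of the Booleanization, through a unique Boolean algebra homomorphism $\iota^- \colon D_0^- \to D^-$ extending the inclusion $D_0 \hookrightarrow D$. As recalled in the paragraph on Booleanization in Section~\ref{sec:prel}, a homomorphism of Boolean algebras that is injective on a distributive lattice generating its domain is itself injective; since $\iota^-$ restricts to the inclusion on $D_0$, it is an embedding. As $b$ is a Boolean combination of the $a_i \in D_0$, it lies in the image of $\iota^-$, so $b = \iota^-(b_0)$ for a unique $b_0 \in D_0^-$.

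Now apply Corollary~\ref{c:1} to the co-Heyting algebra $D_0$ and the element $b_0 \in D_0^-$. This yields a decreasing chain $a_1' \geq a_2' \geq \dots \geq a_{2m}'$ in $D_0$ such that
\[
  b_0 = a_1' - (a_2' - (\dots - (a_{2m-1}' - a_{2m}')\dots)).
\]
Applying the Boolean homomorphism $\iota^-$ to both sides, and noting that $\iota^-$ preserves all Boolean operations and agrees with the inclusion $D_0 \hookrightarrow D$ on $D_0$, we obtain the same difference chain expression for $b$ with all $a_i' \in D_0 \subseteq D$, as required.

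The only delicate step is the preservation of injectivity from $D_0 \hookrightarrow D$ to $\iota^- \colon D_0^- \to D^-$; but this is exactly the generic property of Booleanization recalled in Section~\ref{sec:prel}, so no obstacle arises. Equivalently, and in the language of the paragraph preceding the corollary, one may view $D^-$ as the directed colimit of the Booleanizations $D_\alpha^-$ of the finite bounded sublattices $D_\alpha$ of $D$; every element of $D^-$ then comes from some $D_\alpha^-$, each $D_\alpha$ is co-Heyting, and Corollary~\ref{c:1} supplies a difference chain in $D_\alpha \subseteq D$.
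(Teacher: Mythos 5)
Your proposal is correct and follows essentially the same route as the paper: reduce to a finitely generated (hence finite) bounded sublattice, observe that finite distributive lattices are co-Heyting, apply Corollary~\ref{c:1} there, and transfer the resulting difference chain along the embedding of Booleanizations. The paper states this argument only in outline (via the direct-limit formulation you mention at the end), so your write-up is simply a more detailed version of the same proof.
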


As shown by Corollary~\ref{c:1}, the case of a co-Heyting algebra is
particularly simple. In order to be able to apply the ideas of this
section in a broader setting, we need a generalization which is easier
to work with in the pointfree setting afforded by canonical
extensions.
\section{Preliminaries on canonical extensions}\label{sec:canext}
Here we provide the required information on canonical extensions. For
further details, please see~\cite{Gehrke14} or~\cite{GehrkeJonsson04}.
\paragraph{Canonical extensions}

Let $D$ be a bounded distributive lattice and $X$ its dual Priestley
space. Then, Priestley duality implies that the Stone map

\[
  D\ \longrightarrow\ \mathcal U(X,\leq),\quad a\ \mapsto\ \widehat{a}
\]

\noindent is an embedding of $D$ into the complete lattice $\mathcal
U(X,\leq)$ of upsets of the poset underlying $X$.  An embedding into a
complete lattice is called a \emph{completion}, and canonical
extension, first introduced by J\' onsson and Tarski
\cite{JonssonTarski51}, comes about from the fact that the above
completion can be uniquely characterized in abstract terms among all
the completions of $D$. Indeed, it is the unique completion
$e:D\hookrightarrow C$ (up to isomorphism) satisfying the following
two properties:

\begin{enumerate}
\item[] \hspace*{-.5cm}{\bf(dense)} Each element of $C$ is a join of
  meets and a meet of joins of elements in the image of $D$;
\item[]\hspace*{-1.1cm} {\bf(compact)} For $S,T\!\subseteq\!D$ with
  $\bigwedge\!e(S)\!\leq\!\bigvee\!e(T)$ in $C$, there are finite
  $S'\!\subseteq\!S$ and $T'\!\subseteq\!T$ with
  $\bigwedge\!e(S')\!\leq\!\bigvee\!e(T')$.
\end{enumerate}

\noindent Thus, instead of working with the the dual space of a
bounded distributive lattice $D$, we will work with its canonical
extension, denoted $D^\delta$. It comes with an embedding
$D\hookrightarrow D^\delta$, which is compact and dense in the above
sense. As stated above this means that $D^\delta$ is isomorphic to
$\mathcal U(X,\leq)$, where~$X$ is the Priestley space of $D$, and
that the embedding $e$ is naturally isomorphic to the Stone map
$a\mapsto \widehat{a}$. In what follows, to lighten the notation, we
will assume (WLOG) that the embedding $e$ is an inclusion so that $D$
sits as a sublattice in $D^\delta$.
\paragraph{Filter and ideal elements}

Since $D$ sits in $D^\delta$ as the clopen upsets sit in $\mathcal
U(X,\leq)$, where $X$ is the Priestley space of $D$, we see that the
join-closure of $D$ in $D^\delta$ will correspond to the lattice of
open upsets of $X$. One can show that these are in one-to-one
correspondence with the ideals of $D$. Thus we denote the join-closure
of $D$ in $D^\delta$ by $I(D^\delta)$ and call the elements of
$I(D^\delta)$ \emph{ideal elements} of $D^\delta$. Similarly the
meet-closure of $D$ in $D^\delta$ will correspond to the lattice of
closed upsets of $X$ and one can show that these are in one-to-one
correspondence with the filters of $D$. Accordingly we denote the
meet-closure of $D$ in $D^\delta$ by $F(D^\delta)$ and call its
elements \emph{filter elements} of $D^\delta$.  Note that, relative to
the concepts of filter and ideal elements, the density property of
$D^\delta$ states that every element of $D^\delta$ is a join of filter
elements and a meet of ideal elements.

We are particularly interested in the closed upsets of $X$, and thus
in $F(D^\delta)$, since they provide canonical difference chains, as
we have just seen in the previous section. We point out another
abstract characterization of $F(D^\delta)$ which will be useful to
us. $F(D^\delta)$ is the free down-directed meet completion of $D$ and
thus it is uniquely determined by the following two properties
\cite[Proposition~2.1]{GePr08}:
\begin{enumerate}
\item[] \hspace*{-.3cm}{\bf(filter dense)} \hspace{.2cm} Each element
  of $F(D^\delta)$ is a down-directed meet of elements from $D$;
\item[]\hspace*{-1cm} {\bf(filter compact)} For
  $S\!\subseteq\!F(D^\delta)$ down-directed and $a\!\in\!D$, if
  $\bigwedge\!S\!\leq\!a$, then there is $s\!\in\!S$ with
  $s\!\leq\!a$.
\end{enumerate}

Notice, that in the particular case of a Boolean algebra $B$, the
order on the dual space is trivial and thus $B^\delta$ is isomorphic
to the full powerset of the dual space $X$ of $B$. Also, the ideal
elements of $B^\delta$ correspond to all the opens of $X$ while the
filter elements of $B^\delta$ correspond to all the closed subsets
of~$X$.
\paragraph{A ceiling function at the level of canonical extensions}
Consider a situation where we have a Boolean algebra $B$ and a bounded
sublattice $D$ of $B$. We saw in Proposition~\ref{prop:ceiling}, that
if $D$ is a co-Heyting algebra and $B$ is its Booleanization, then
there is a lower adjoint $\lceil\ \rceil:B\to D$.  Here we will show,
that \emph{on the level of canonical extensions} any embedding has a
lower adjoint with nice properties for filter elements.

It is a part of the theory of canonical extensions that the embedding
of $D$ in $B$ extends to a complete embedding of the canonical
extensions $D^\delta\hookrightarrow B^\delta$
\cite[Theorem~3.2]{GehrkeJonsson04} which restricts to embeddings for
the filter elements as well as for the ideal elements
\cite[Theorem~2.19]{GehrkeJonsson04}. Since the embedding is a
complete embedding it has both an upper and a lower adjoint, see
Proposition~\ref{prop:adj}.  We will be interested in the lower
adjoint, which we will denote by $\overline{(\ )} \colon B^\delta\to
D^\delta$. Thus we have, for $v\in B^\delta$,

\[
  \overline{v}=\bigwedge\{u\in D^\delta\mid v\leq u\}.
\]

Since $D^\delta\hookrightarrow B^\delta$, we may think of $D^\delta$
as a (complete) sublattice of $B^\delta$ and of the embedding as the
identity. If we do this, then we may see $\overline{(\ )}$ as a
closure operator on $B^\delta$ taking values inside $D^\delta$,
cf. the comment following Proposition~\ref{prop:adj}. We are
particularly interested in the restriction of this closure operator to
the filter elements.

\begin{proposition}\label{prop:overline}
  Let $B$ be a Boolean algebra and $D$ a bounded sublattice of $B$,
  and let $\overline{(\ )}\colon B^\delta\to D^\delta$ be the lower
  adjoint of the embedding $D^\delta\hookrightarrow B^\delta$. Then
  the following properties hold:

  \begin{enumerate}
  \item\label{item:3} for each $u\in B^\delta$, $\overline{u}$ is the
    least element of $D^\delta$ which lies above $u$;
  \item\label{item:4} the map $\overline{(\ )}\colon B^\delta\to
    D^\delta$ sends filter elements to filter elements;
  \item\label{item:5} the map $\overline{(\ )}\colon F(B^\delta)\to
    F(D^\delta)$ preserves down-directed meets.
  \end{enumerate}
\end{proposition}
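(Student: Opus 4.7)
The plan is to prove the three parts in order; part (a) is immediate from the adjunction, and the substantial work is in (b) and (c), where I would exploit the density and compactness of $B\hookrightarrow B^\delta$ to reduce statements about filter elements to finitary data in $B$ (and, via the sublattice structure, in $D$). For (a), I would simply unpack the adjunction between the embedding $D^\delta\hookrightarrow B^\delta$ and its lower adjoint $\overline{(\ )}$: applied to the tautology $\overline{u}\leq\overline{u}$ it yields $u\leq\overline{u}$, while for any $v\in D^\delta$ with $u\leq v$ it yields $\overline{u}\leq v$. Since $\overline{u}\in D^\delta$ by construction, this exhibits $\overline{u}$ as the minimum of $\{v\in D^\delta\mid u\leq v\}$.

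Part (b) is the core of the argument. Given $x\in F(B^\delta)$, I would use filter-density to write $x=\bigwedge S$ for some down-directed $S\subseteq B$, and introduce $T=\{a\in D\mid x\leq a\}$, which is down-directed in $D$ since $D$ is a sublattice of $B$. My goal is to establish $\overline{x}=\bigwedge T$, which, since $T\subseteq D$, places $\overline{x}$ inside $F(D^\delta)$. The inequality $\overline{x}\leq\bigwedge T$ drops out of (a). For the reverse $\bigwedge T\leq\overline{x}$, I would invoke density of $D$ in $D^\delta$ to write $\overline{x}$ as a meet of ideal elements $\overline{x}=\bigwedge_j p_j$ with $p_j=\bigvee A_j$, $A_j\subseteq D$, and $p_j\geq\overline{x}\geq x$. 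From $\bigwedge S\leq\bigvee A_j$, compactness of $B\hookrightarrow B^\delta$ supplies finite $S'\subseteq S$ and $A_j'\subseteq A_j$ with $\bigwedge S'\leq\bigvee A_j'$; down-directedness of $S$ then furnishes $s_0\in S$ below $\bigwedge S'$, so that $x\leq s_0\leq\bigvee A_j'\in D$ places the element $\bigvee A_j'$ inside $T$. Hence $\bigwedge T\leq\bigvee A_j'\leq p_j$ for every $j$, and taking the meet over $j$ gives $\bigwedge T\leq\overline{x}$.

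For (c), I would deduce the preservation property directly from (b). Let $\{x_i\}_{i\in I}$ be a down-directed family in $F(B^\delta)$ with meet $x$; monotonicity of $\overline{(\ )}$ already yields $\overline{x}\leq\bigwedge_i\overline{x_i}$. Using the description from (b), the reverse inequality reduces to the set-theoretic equality $\{a\in D\mid x\leq a\}=\{a\in D\mid\exists i,\ x_i\leq a\}$, one inclusion of which is trivial. For the other inclusion I would write each $x_i=\bigwedge S_i$ with $S_i\subseteq B$ down-directed, so that $x=\bigwedge\bigcup_i S_i$, and, given $a\in D$ with $x\leq a$, apply compactness of $B\hookrightarrow B^\delta$ to obtain a finite $F\subseteq\bigcup_i S_i$ with $\bigwedge F\leq a$; down-directedness of $\{x_i\}$ then provides an $x_{i_0}$ below every $x_i$ indexing an element of $F$, whence $x_{i_0}\leq\bigwedge F\leq a$. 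The main obstacle in both (b) and (c) is precisely this reverse inequality: it cannot be extracted from the abstract adjunction alone, but requires the joint use of density in $D^\delta$ (to represent $\overline{x}$ as a meet of ideal elements) and compactness of $B\hookrightarrow B^\delta$ (to finitize the resulting comparison and pull it back into $D$).
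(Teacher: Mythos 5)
Your proposal is correct and follows essentially the same route as the paper's proof: part (a) from the adjunction, part (b) by combining density (writing the relevant elements of $D^\delta$ as meets of ideal elements) with compactness to interpolate elements of $D$ between $x$ and those ideal elements, and part (c) by reducing to the description of $\overline{x}$ as $\bigwedge\{a\in D\mid x\leq a\}$ and finitizing. The only differences are organizational — you decompose $\overline{x}$ itself rather than every $u\geq x$ in $D^\delta$, and in (c) you re-derive the filter compactness of $F(B^\delta)$ from basic compactness where the paper simply invokes it.
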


\begin{proof}
  
  We think of the embedding $D^\delta\hookrightarrow B^\delta$ as an
  inclusion. Thus \ref{item:3} follows by the definition of adjoints:
  for $v\in D^\delta$ we have $\overline{u}\leq v$ if and only if
  $u\leq v$.  For \ref{item:4}, let $v\in F(B^\delta)$ and let $u\in
  D^\delta$ with $v\leq u$. Order dually to $F(D^\delta)$, the free
  directed join completion of $D$ is given by the subframe of ideal
  elements, $I(D^\delta)$.  By the density property of canonical
  extension, every element of $D^\delta$ is a join of filter elements
  and a meet of ideal elements. For each $y\in I(D^\delta)$ with
  $u\leq y$, we have $v\leq y=\bigvee\{a\in D\mid a\leq y\}$ and thus,
  by compactness, there is $a_y\in D$ with $v\leq a_y\leq y$.  Now we
  get
  
  \[
    v\leq\bigwedge\{a_y\mid y\in I(D^\delta), u\leq
    y\}\leq\bigwedge\{y\mid y\in I(D^\delta), u\leq y\}=u.
  \]
  
  \noindent
  Since $\bigwedge\{a_y\mid y\in I(D^\delta), u\leq y\}\in
  F(D^\delta)\subseteq D^\delta$, we have

  \begin{align*}
    \overline{v}& =\bigwedge\{u\in D^\delta, v\leq
                  u\}
    \\ & =\bigwedge\{\bigwedge\{a_y\mid u\leq y\in I(D^\delta)\}\mid
         u\in D^\delta, v\leq u\}\in F(D^\delta).
  \end{align*}
  
  \noindent For \ref{item:5}, let $S$ be a down-directed subset of
  $F(B^\delta)$ with $v=\bigwedge S$. By \ref{item:4},
  $\overline{v}\in F(D^\delta)$, and thus
  $\overline{v}=\bigwedge\{a\in D\mid v\leq a\}$. Let $a\in D$ with
  $v\leq a$, then $\bigwedge S\leq a$ and by the filter compactness
  property of $F(B^\delta)$, there is $w_a\in S$ with $w_a\leq
  a$. Therefore we have
  
  \[
    \bigwedge\{\overline{w}\mid w\in
    S\}\leq\bigwedge\{\overline{w}_a\mid v\leq a\in
    D\}\leq\bigwedge\{a\mid v\leq a\in D\}=\overline{v}.
  \]

  \noindent
  On the other hand, since $v\leq w$ for each $w\in S$, by
  monotonicity of the closure operator, we also have $\overline{v}\leq
  \bigwedge\{\overline{w}\mid w\in S\}$ and thus the closure operator,
  restricted to $F(B^\delta)$, preserves down-directed meets.
\end{proof}

\begin{remark}\label{rem}
  
  Notice that if $B$ is the Booleanization of $D$, and $X$ is the
  Priestley space of $D$, then $B^\delta\cong\mathcal P(X)$,
  $D^\delta\cong\mathcal U(X)$, and the closure operator $\overline{(\
    )}: B^\delta\to D^\delta$ is, according to
  Proposition~\ref{prop:overline}\ref{item:3} simply the map that
  takes a $S\subseteq X$ to the upset ${\uparrow}S$. Furthermore,
  Proposition~\ref{prop:overline}\ref{item:4} tells us that if
  $K\subseteq X$ is closed then ${\uparrow}K$ is also closed. That is,
  it is the canonical extension incarnation of the second assertion in
  Proposition~\ref{p:2}. We did not prove
  Proposition~\ref{prop:overline}\ref{item:5} in topological terms,
  but we could have. It says that if $\{W_i\}_{i\in I}$ is a
  down-directed family of closed subsets of a Priestley space, then

  \[
    \bigcap_{i\in I} {\uparrow}W_i={\uparrow}(\bigcap_{i\in I} W_i).
  \]
  
  \noindent A statement that is not true in general for down-directed
  families of subsets of a poset.
  
\end{remark}

\section{The difference hierarchy and directed families of
  adjunctions}\label{sec:dir-fam-adj}
Let $B$ be a Boolean algebra, $I$ a directed partially ordered set,
$\{S_i\}_{i\in I}$ an indexed family of meet-semilattices, and
$\{f_i\colon B\leftrightarrows S_i\colon g_i\}_{i\in I}$ an indexed
family of adjunctions satisfying:
\begin{enumerate}[label = (D.\arabic*)]
\item\label{item:D1} $\im(g_i)\subseteq \im(g_j)$ for all $i,j\in I$
  with $i\leq j$;
\item\label{item:D2} $\bigcup_{i\in I}\im(g_i):= D$ is a bounded
  sublattice of $B$.
\end{enumerate}

For $b\in B$ and $i\in I$ we denote by
$\overline{b}^i=g_if_i(b)$. This is the closure operator on $B$
associated with the adjunction $f_i\colon B\leftrightarrows S_i\colon
g_i$. We have the following relationship between these closure
operators and the one given by $D$.

\begin{proposition}\label{prop:comparingoverlines}
  
  Let $B$, $D$ and $\{f_i\colon B\leftrightarrows S_i\colon
  g_i\}_{i\in I}$ be as specified above, then, for each $x\in B$, we
  have:
  
  \[
    \overline{x}=\bigwedge_{i\in I} \overline{x}^i
  \]
  
  \noindent where the meet is taken in $B^\delta$.
\end{proposition}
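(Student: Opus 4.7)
The plan is to prove the inequalities $\overline{x}\leq\bigwedge_{i\in I}\overline{x}^i$ and $\bigwedge_{i\in I}\overline{x}^i\leq\overline{x}$ separately, using Proposition~\ref{prop:overline} and the basic adjunction property $\mathrm{id}\leq g_if_i$.

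For the first inequality, the unit of the adjunction yields $x\leq g_if_i(x)=\overline{x}^i$ for every $i\in I$, while $\overline{x}^i\in\im(g_i)\subseteq D\subseteq D^\delta$ by condition~\ref{item:D2}. Since Proposition~\ref{prop:overline}\ref{item:3} characterizes $\overline{x}$ as the least element of $D^\delta$ above $x$, we obtain $\overline{x}\leq\overline{x}^i$ for each $i\in I$, and hence $\overline{x}\leq\bigwedge_{i\in I}\overline{x}^i$. The meet is unambiguous: the embedding $D^\delta\hookrightarrow B^\delta$ preserves all meets, so it does not matter whether it is computed in $D^\delta$ or in $B^\delta$.

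For the reverse inequality, the key point is that $x\in B$ is a filter element of $B^\delta$, so by Proposition~\ref{prop:overline}\ref{item:4} we have $\overline{x}\in F(D^\delta)$, and filter density gives $\overline{x}=\bigwedge\{a\in D\mid x\leq a\}$. It therefore suffices to show that for every $a\in D$ with $x\leq a$ there is some $j\in I$ with $\overline{x}^j\leq a$. By~\ref{item:D2} we may pick $j\in I$ with $a\in\im(g_j)$, say $a=g_j(s)$. The adjunction $(f_j,g_j)$ then converts $x\leq g_j(s)$ into $f_j(x)\leq s$, and applying the monotone map $g_j$ produces $\overline{x}^j=g_jf_j(x)\leq g_j(s)=a$, as required. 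Taking the meet over all such $a$ then gives $\bigwedge_{i\in I}\overline{x}^i\leq\overline{x}$.

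The argument is largely bookkeeping, and I do not anticipate a real obstacle. The main points to be careful with are that $\overline{x}$ is defined at the canonical-extension level as the lower adjoint of the inclusion $D^\delta\hookrightarrow B^\delta$ (not of $D\hookrightarrow B$ directly), and that $x\in B$ must be viewed inside $F(B^\delta)$ to invoke Proposition~\ref{prop:overline}\ref{item:4}. Condition~\ref{item:D1} is not used explicitly in the present proof, but combined with the adjunction it forces the family $\{\overline{x}^i\}_{i\in I}$ to be down-directed (for $i\leq j$, one has $\overline{x}^i\in\im(g_j)$, and the same adjunction argument gives $\overline{x}^j\leq\overline{x}^i$), which will be convenient when this proposition is later combined with Proposition~\ref{prop:overline}\ref{item:5} to analyze directed meets in $F(B^\delta)$.
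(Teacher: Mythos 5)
Your proof is correct and follows essentially the same route as the paper: the easy inequality via $\overline{x}^i\in\im(g_i)\subseteq D\subseteq D^\delta$ and the characterization of $\overline{x}$ as the least element of $D^\delta$ above $x$, and the reverse inequality via $\overline{x}\in F(D^\delta)$ (Proposition~\ref{prop:overline}\ref{item:4}) together with picking $j$ so that a given upper bound $a\in D$ lies in $\im(g_j)$. The only cosmetic difference is that the paper concludes $\overline{x}^j\leq\overline{a}^j=a$ by monotonicity of the closure operator, whereas you unwind the adjunction $f_j(x)\leq s$ directly; these are the same argument.
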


\begin{proof}
  
  For $x\in B$, since $\overline{(\ )}^i$ is a closure operator, we
  have $x\leq \overline{x}^i\in\im(g_i) \subseteq D\subseteq
  D^\delta$. Also $\overline{x}$ is the least element of $D^\delta$
  above $x$.  It follows that $\overline{x}\leq \overline{x}^i$ for
  each $i\in I$ and thus $\overline{x}\leq \bigwedge_{i\in
    I}\overline{x}^i$. On the other hand, by
  Proposition~\ref{prop:overline}\ref{item:4}, since $x\in B\subseteq
  F(B^\delta)$, we have $\overline{x}\in F(D^\delta)$. That is,
  $\overline{x}=\bigwedge\{a\in D\mid x\leq a\}$. Now let $a\in D$
  with $x\leq a$. Then, since $D=\bigcup_{i\in I}\im(g_i)$, there is
  $j\in I$ with $a\in \im(g_j)$.  Now using the fact that $x\leq a$
  and the monotonicity of $\overline{(\ )}^j$ we obtain
  
  \[
    \bigwedge_{i\in I} \overline{x}^i\leq \overline{x}^j\leq
    \overline{a}^j=a.
  \]
  
  \noindent
  We thus have
  
  \[
    \bigwedge_{i\in I} \overline{x}^i\leq \bigwedge\{a\in D\mid x\leq
    a\}=\overline{x}.\popQED
  \]  
\end{proof}

Now fix $b\in B$ and define sequences $\{k_n\}_{n\geq 1}$ and
$\{c_{n,i}\}_{n\geq 1}$, for each $i\in I$, as follows:

\[
  k_1=\overline{b},\ k_{2n}=\overline{k_{2n-1}-b}, \quad\text{ and
  }\quad k_{2n+1}=\overline{k_{2n}\wedge b}
\]

\[
  c_{1,i}=\overline{b}^i, \ c_{2n,i}=\overline{c_{2n-1,i}-b}^i,
  \quad\text{ and }\quad c_{2n+1,i}=\overline{c_{2n,i}\wedge b}^i
\]

\begin{remark}\label{r:k=K}
  
  Note that if $B=D^-$, then the sequence $\{k_n\}_{n\in\N}$ is
  exactly the canonical extension incarnation of the sequence
  $\{K_n\}_{n\in\N}$ given by $V=\widehat{b}$ of Theorem~\ref{t:1}. We
  can say even more: By the universal property of the Booleanization
  $D^-$ of $D$, any embedding $D\hookrightarrow B$ factors through
  $D^-$. Thus it is the composition of two embeddings
  $D\hookrightarrow D^-$ and $D^-\hookrightarrow B$. The arguments
  given above thus apply to both of these and the canonical extensions
  of both of these embeddings have lower adjoints whose composition is
  the lower adjoint of the composition of the two embeddings. But if
  $b\in D^-$, then applying the lower adjoint of the embedding
  $D^-\hookrightarrow B$ to it, leaves it fixed. Accordingly, for any
  embedding $D\hookrightarrow B$, if $b\in B$ is in fact in $D^-$,
  then the sequence $\{k_n\}_{n\in\N}$ is exactly the canonical
  extension incarnation of the sequence $\{K_n\}_{n\in\N}$ given by
  $V=\widehat{b}$ of Theorem~\ref{t:1}.
  
\end{remark}

\begin{lemma}\label{lem:kandc}
  
  The following properties hold for the sequences as defined above:
  
  \begin{enumerate}
  \item\label{item:7} $i\leq j$ implies $k_n\leq c_{n,j}\leq c_{n,i}$
    for all $n\in\N$ and $i,j\in I$;
  \item\label{item:8} $k_n=\bigwedge_{i\in I} c_{n,i}$ for all
    $n\in\N$.
  \end{enumerate}
  
\end{lemma}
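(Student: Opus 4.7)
The plan is to prove both (a) and (b) by induction on $n$, establishing (a) first so it is available for (b). The base case $n = 1$ is straightforward: $c_{1,i} = g_i f_i(b)$ is the minimum element of $\im(g_i) \cap [b,\top]$, which lies in $D \subseteq D^\delta$ and is an upper bound of $b$, so $k_1 = \overline{b} \leq c_{1,i}$; and for $i \leq j$ the hypothesis $\im(g_i) \subseteq \im(g_j)$ provides extra candidates for the minimum defining $\overline{b}^j$, giving $c_{1,j} \leq c_{1,i}$. Part (b) at $n = 1$ is precisely Proposition~\ref{prop:comparingoverlines}.

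For the inductive step of (a), I will treat the even case $n+1 = 2m$ (the odd case with $\wedge b$ in place of $-b$ is entirely analogous), so $c_{n+1,i} = \overline{c_{n,i} - b}^i$ and $k_{n+1} = \overline{k_n - b}$. The inductive hypotheses $k_n \leq c_{n,i}$ and $c_{n,j} \leq c_{n,i}$ (for $i \leq j$) are preserved by subtracting $b$, and then two monotonicity observations close the argument: $\overline{x} \leq \overline{x}^i$ holds for every $x \in B^\delta$ since $\overline{x}^i \in D \subseteq D^\delta$ already bounds $x$ above, while $\overline{x}^j \leq \overline{x}^i$ holds whenever $i \leq j$ by the same image-comparison used in the base case. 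Chaining these yields $k_{n+1} \leq c_{n+1,j} \leq c_{n+1,i}$.

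The inductive step of (b) is the main content. Starting from $k_{n+1} = \overline{k_n - b}$, I substitute the hypothesis $k_n = \bigwedge_i c_{n,i}$ and invoke complete distributivity of $B^\delta$ to get $k_n - b = \bigwedge_i(c_{n,i} - b)$; the family $\{c_{n,i} - b\}_i \subseteq B \subseteq F(B^\delta)$ is down-directed by (a), so Proposition~\ref{prop:overline}\ref{item:5} yields $k_{n+1} = \bigwedge_i \overline{c_{n,i} - b}$; finally, applying Proposition~\ref{prop:comparingoverlines} to each $c_{n,i} - b \in B$ gives
\[
  k_{n+1} = \bigwedge_{i,j \in I} \overline{c_{n,i} - b}^j.
\]
On the other hand $\bigwedge_i c_{n+1,i} = \bigwedge_i \overline{c_{n,i} - b}^i$ is the diagonal of this double meet, so $k_{n+1} \leq \bigwedge_i c_{n+1,i}$ is automatic. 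The main obstacle is the reverse inequality, for which my plan is a cofinality argument exploiting the directedness of $I$. Given any $(i, j) \in I \times I$, I choose $k \in I$ with $k \geq i, j$ and chain
\[
  \overline{c_{n,i} - b}^j \;\geq\; \overline{c_{n,k} - b}^j \;\geq\; \overline{c_{n,k} - b}^k \;=\; c_{n+1,k} \;\geq\; \bigwedge_l c_{n+1,l},
\]
where the first inequality uses $c_{n,k} \leq c_{n,i}$ from (a) together with monotonicity of $\overline{(\ )}^j$, and the second uses $\im(g_j) \subseteq \im(g_k)$. This shows that $\bigwedge_l c_{n+1,l}$ is a lower bound of every term in the double meet defining $k_{n+1}$, whence $\bigwedge_l c_{n+1,l} \leq k_{n+1}$ and equality holds. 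The odd inductive step is handled identically after replacing $-b$ by $\wedge b$ throughout.
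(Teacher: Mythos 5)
Your proof is correct and follows essentially the same route as the paper's: part (a) by induction via the chain of monotonicity inequalities $\overline{x}\leq\overline{x}^j\leq\overline{x}^i$, and part (b) by substituting the inductive hypothesis, applying Proposition~\ref{prop:overline}\ref{item:5} and Proposition~\ref{prop:comparingoverlines} to obtain the double meet $\bigwedge_{i,j}\overline{c_{n,i}\wedge b_n}^j$, and then collapsing it to the diagonal by directedness of $I$. The only cosmetic differences are that the paper unifies the odd and even steps with the notation $b_n$ and starts the induction at $k_0=c_{0,i}=1$, while you treat one parity explicitly and handle $n=1$ directly.
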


\begin{proof}
  
  Define $k_0=c_{0,i}=1$ for all $i\in I$. Also, define $b_n=b$ for
  $n$ odd and $b_n=\neg b$ for $n$ even then we have, for all $n\geq
  1$, $k_{n+1}=\overline{k_n\wedge b_n}$ and similarly for the
  $c_{n,i}$. Proceeding by induction on~$n$, we suppose~\ref{item:7}
  holds for $n\in\N$ and that $i\leq j$. Note that since
  $\im(g_i)\subseteq\im(g_j)\subseteq D$, we have
  $\overline{x}\leq\overline{x}^j\leq \overline{x}^i$ for all $x\in
  B$. Also, by the induction hypothesis $k_n\leq c_{n,j}\leq c_{n,i}$,
  and thus we have
  
  \[
    \overline{k_n\wedge b_n}\leq\overline{c_{n,j}\wedge b_n}\leq
    \overline{c_{n,j}\wedge b_n}^j\leq \overline{c_{n,j}\wedge b_n}^i
    \leq \overline{c_{n,i}\wedge b_n}^i.
  \]
  
  \noindent That is, $k_{n+1}\leq c_{n+1,j}\leq c_{n+1,i}$ as
  required.

  For~\ref{item:8}, again the case $n=0$ is clear by definition and we
  suppose $k_n=\bigwedge_{i\in I} c_{n,i}$. Then we have
  
  \[
    k_{n+1}=\overline{k_n\wedge b_n}=\overline{(\bigwedge_{i\in I}
      c_{n,i})\wedge b_n}=\overline{\bigwedge_{i\in I} (c_{n,i}\wedge
      b_n)}.
  \]
  
  \noindent Now applying Proposition~\ref{prop:overline}\ref{item:5}
  and then Proposition~\ref{prop:comparingoverlines}, we obtain
  
  \[
    k_{n+1}=\bigwedge_{i\in I} \overline{c_{n,i}\wedge
      b_n}=\bigwedge_{i\in I}\bigwedge_{j\in I}
    \overline{c_{n,i}\wedge b_n}^j.
  \]
  
  \noindent Now given $i,j\in I$, since $I$ is directed, there is
  $k\in I$ with $i,j\leq k$. By Lemma~\ref{lem:kandc}\ref{item:7} we
  have $c_{n,k}\leq c_{n,i}$. Combining this with the fact that
  $\im(g_j)\subseteq\im (g_k)$ we obtain
  
  \[
    \overline{c_{n,k}\wedge b_n}^k\leq \overline{c_{n,i}\wedge
      b_n}^k\leq \overline{c_{n,i}\wedge b_n}^j
  \]
  
  \noindent and thus
  
  \[
    k_{n+1}=\bigwedge_{(i,j)\in I^2} \overline{c_{n,i}\wedge
      b_n}^j=\bigwedge_{k\in I} \overline{c_{n,k}\wedge
      b_n}^k=\bigwedge_{k\in I}c_{n+1,k}.\popQED
  \]
  
\end{proof}

\noindent We are now ready to state and prove our main theorem.

\begin{theorem}\label{thrm:main}
  
  Let $B$ be a Boolean algebra, $I$ a directed partially ordered set,
  $\{S_i\}_{i\in I}$ an indexed family of meet-semilattices, and
  $\{f_i\colon B\leftrightarrows S_i\colon g_i\}_{i\in I}$ an indexed
  family of adjunctions satisfying~\ref{item:D1} and~\ref{item:D2},
  that is:
  
  \begin{enumerate}[label = (D.\arabic*)]
  \item\label{item:1} $\im(g_i)\subseteq \im(g_j)$ for all $i,j\in I$
    with $i\leq j$;
  \item\label{item:2} $\bigcup_{i\in I}\im(g_i):= D$ is a bounded
    sublattice of $B$.
  \end{enumerate}

  \noindent For each $b\in B$, define

  \[
    k_1=\overline{b},\quad k_{2n}=\overline{k_{2n-1}-b}, \quad\text{
      and }\quad k_{2n+1}=\overline{k_{2n}\wedge b}
  \]

  \[
    c_{1,i}=\overline{b}^i, \quad c_{2n,i}=\overline{c_{2n-1,i}-b}^i,
    \quad\text{ and }\quad c_{2n+1,i}=\overline{c_{2n,i}\wedge b}^i
  \]

  If $b\in D^-\subseteq B$, then, there is $m\in\N$ and an $i\in I$ so
  that, for each $j\in I$ with $i\leq j$ we have

  \begin{align*}
    b&=k_1-(k_2-\ldots (k_{2m-1}-k_{2m}).\,\!.\,\!.)=\bigvee_{l=1}^m (k_{2l-1}-k_{2l})\\
     &=c_{1,j}-(c_{2,j}-\ldots (c_{2m-1,j}-c_{2m}).\,\!.\,\!.)=\bigvee_{l=1}^m (c_{2l-1,j}-c_{2l,j}).
  \end{align*}

\end{theorem}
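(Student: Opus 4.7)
The plan is to first use Remark~\ref{r:k=K} and Theorem~\ref{t:1} to pin down the $k_n$-side of the statement, then transfer the information to the $c_{n,j}$-side via filter compactness and the directedness of $I$, closing with a short Boolean computation.

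For the $k$-side, since $b\in D^-$, Remark~\ref{r:k=K} identifies $\{k_n\}$ with the canonical-extension incarnation of the sequence $\{K_n\}$ from Theorem~\ref{t:1} applied to the clopen $V=\widehat{b}$ in the Priestley dual of $D$. Theorem~\ref{t:1} then supplies $m\in\N$ with $2m-1=\max\{\deg_V(x)\mid x\in V\}$, together with $b=\bigvee_{l=1}^{m}(k_{2l-1}-k_{2l})$ in $B^\delta$ and the decreasing chain $k_1\geq\dots\geq k_{2m}$. Proposition~\ref{l:2} then gives the equivalent nested form on the $k$-side.

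To transfer this to the $c$-side, note that the same Theorem~\ref{t:1} gives $k_{2m+1}=0$. By Lemma~\ref{lem:kandc}\ref{item:8}, $\bigwedge_{i\in I}c_{2m+1,i}=0$ in $B^\delta$, and by Lemma~\ref{lem:kandc}\ref{item:7} together with the directedness of $I$, the family $\{c_{2m+1,i}\}_{i\in I}$ is down-directed and lies in $B\subseteq F(B^\delta)$. Filter compactness of $B^\delta$ yields some $i_0\in I$ with $c_{2m+1,i_0}=0$, and then Lemma~\ref{lem:kandc}\ref{item:7} gives $c_{2m+1,j}=0$ for all $j\geq i_0$. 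Since $\overline{(\ )}^j$ is a closure operator, $c_{2m+1,j}=\overline{c_{2m,j}\wedge b}^j=0$ forces $c_{2m,j}\wedge b=0$.

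It remains to derive the explicit decomposition from this. Using that each $c_{n,j}$ is a fixed point of $\overline{(\ )}^j$ (being in $\im(g_j)$) together with monotonicity, one checks that $c_{1,j}\geq c_{2,j}\geq\dots\geq c_{2m,j}$ is a decreasing chain in $B$. A routine induction on $n$ in the Boolean algebra $B$ then gives
\[
b-\bigvee_{l=1}^{n}(c_{2l-1,j}-c_{2l,j})\,=\,b\wedge c_{2n,j},
\]
by iteratively collapsing each new summand via the defining inequalities $c_{2l-1,j}\geq b\wedge c_{2l-2,j}$ and $c_{2l,j}\geq c_{2l-1,j}-b$. Taking $n=m$ and invoking $b\wedge c_{2m,j}=0$ yields $b=\bigvee_{l=1}^{m}(c_{2l-1,j}-c_{2l,j})$, and Proposition~\ref{l:2} converts this to nested-difference form. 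The main obstacle is precisely the step from the infinitary statement $k_{2m+1}=\bigwedge_i c_{2m+1,i}=0$ in $B^\delta$ to the finite-index statement $c_{2m+1,i_0}=0$ in $B$, which is exactly the content of filter compactness once one has verified via Lemma~\ref{lem:kandc} that the family is down-directed and sits in $B$.
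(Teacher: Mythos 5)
Your proof is correct, and its second half takes a genuinely different route from the paper. The paper also starts from Remark~\ref{r:k=K} and Theorem~\ref{t:1} to settle the $k$-side, but it then passes to the $c$-side via a separate lemma that replaces one block at a time: assuming $b=v\vee(k_{2l+1}-k_{2l+2})\vee b'$ with $v\wedge k_{2l+1}=0$ and $b'\leq k_{2l+2}$, it uses Lemma~\ref{lem:kandc}\ref{item:8} and compactness to find an index $j$ with $b\wedge c_{2l+2,j}\leq b'$, and a Boolean computation then substitutes $c_{2l+1,j}-c_{2l+2,j}$ for $k_{2l+1}-k_{2l+2}$; this is iterated $m$ times, with directedness of $I$ supplying a common index at the end. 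You instead apply compactness exactly once, at the top of the chain: $k_{2m+1}=\bigwedge_i c_{2m+1,i}=0$ together with down-directedness and filter compactness gives $c_{2m+1,j}=0$, hence $c_{2m,j}\wedge b=0$, for all $j$ above some $i_0$; the rest is a finitary telescoping identity $b-\bigvee_{l=1}^{n}(c_{2l-1,j}-c_{2l,j})=b\wedge c_{2n,j}$ carried out entirely inside $B$. Your route is arguably more economical (one compactness step instead of $m$), at the price of having to justify $k_{2m+1}=0$; this does follow from Theorem~\ref{t:1}, since elements of odd degree lie in $V$, the attained degrees form an initial segment, and $2m-1$ is the maximal degree attained in $V$. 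One small point to add: your telescoping identity at $n=m$ only yields $b\leq\bigvee_{l=1}^{m}(c_{2l-1,j}-c_{2l,j})$; the reverse inclusion is needed for equality, and it is immediate from $c_{2l,j}=\overline{c_{2l-1,j}-b}^{\,j}\geq c_{2l-1,j}-b$, whence $c_{2l-1,j}-c_{2l,j}\leq c_{2l-1,j}\wedge b\leq b$ --- an inequality you cite for the induction but do not explicitly deploy for this purpose. With that sentence added, the argument is complete.
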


Note that for $b\in D^-$ the fact that the first line of the
conclusion holds is precisely the canonical extension reformulation of
Theorem~\ref{t:1}. See also Remark~\ref{r:k=K}.  The fact that the
second line holds is the content of the following lemma.

\begin{lemma}
  
  Let $b, b'\in B$ and $v \in B^\delta$ be such that $v\wedge
  k_{2l+1}=0$ and $b'\leq k_{2l+2}$. Suppose
  $b=v\vee(k_{2l+1}-k_{2l+2})\vee b'$. Then there is an $i\in I$ so
  that, for each $j\in I$ with $i\leq j$ we have
  $b=v\vee(c_{2l+1,j}-c_{2l+2,j})\vee b'$.
  
\end{lemma}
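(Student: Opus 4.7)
The plan is to split the equality into the two inclusions $v \vee (c_{2l+1,j} - c_{2l+2,j}) \vee b' \leq b$ and $b \leq v \vee (c_{2l+1,j} - c_{2l+2,j}) \vee b'$, handle the former unconditionally for every $j$, and use filter compactness for the latter to produce the index $i$.

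For the easy inclusion, $v \leq b$ and $b' \leq b$ are immediate from the original decomposition, and for the remaining joinand the recurrence $c_{2l+2,j} = \overline{c_{2l+1,j} - b}^j$ gives $c_{2l+2,j} \geq c_{2l+1,j} - b$, so $c_{2l+1,j} - c_{2l+2,j} \leq c_{2l+1,j} \wedge b \leq b$. This holds for every $j \in I$.

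For the reverse inclusion, the key move is to apply filter compactness to the family $\{b \wedge c_{2l+2,j}\}_{j \in I}$. I first compute $b \wedge k_{2l+2} = b'$. The sequence $\{k_n\}$ is decreasing because $k_{n+1} = \overline{k_n \wedge b_n} \leq \overline{k_n} = k_n$ (elements of $D^\delta$ are fixed by $\overline{(\ )}$); in particular $v \wedge k_{2l+2} \leq v \wedge k_{2l+1} = 0$, and together with $(k_{2l+1} - k_{2l+2}) \wedge k_{2l+2} = 0$ and $b' \wedge k_{2l+2} = b'$, substituting the decomposition of $b$ yields $b \wedge k_{2l+2} = b'$. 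By Lemma~\ref{lem:kandc}(b) we have $k_{2l+2} = \bigwedge_j c_{2l+2,j}$, and by Lemma~\ref{lem:kandc}(a) the net $\{c_{2l+2,j}\}_j$ is down-directed, so $\{b \wedge c_{2l+2,j}\}_j$ is a down-directed subset of $B \subseteq F(B^\delta)$ whose meet (in $B^\delta$) equals $b \wedge k_{2l+2} = b'$. Since $b' \in B$, filter compactness produces an $i \in I$ with $b \wedge c_{2l+2,i} \leq b'$; the opposite inequality $b' \leq b \wedge c_{2l+2,j}$ is automatic (from $b' \leq b$ and $b' \leq k_{2l+2} \leq c_{2l+2,j}$), so $b \wedge c_{2l+2,j} = b'$ for every $j \geq i$.

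For such $j$, I decompose $b = b' \vee (b - c_{2l+2,j})$ and distribute the original decomposition of $b$ across the subtraction: the $b'$-piece vanishes since $b' \leq c_{2l+2,j}$, the $v$-piece is bounded by $v$, and $(k_{2l+1} - k_{2l+2}) - c_{2l+2,j} = k_{2l+1} \wedge \neg c_{2l+2,j}$ (using $k_{2l+2} \leq c_{2l+2,j}$ to absorb $\neg k_{2l+2}$), which is in turn $\leq c_{2l+1,j} - c_{2l+2,j}$ because $k_{2l+1} \leq c_{2l+1,j}$ by Lemma~\ref{lem:kandc}(a). This gives $b \leq v \vee (c_{2l+1,j} - c_{2l+2,j}) \vee b'$ and completes the equality. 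The main obstacle is resisting the temptation to compare $k_n$ and $c_{n,j}$ piece-by-piece: one must instead recognize that filter compactness should be applied to the meet $b \wedge c_{2l+2,j}$, whose limit collapses precisely onto the predetermined $b'$, and it is this single compactness step that produces one index $i$ uniformly good for all larger $j$.
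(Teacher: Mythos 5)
Your proof is correct and follows essentially the same route as the paper's: both hinge on the single observation that $b\wedge k_{2l+2}\leq b'$ together with Lemma~\ref{lem:kandc}\ref{item:8} and compactness applied to the down-directed family $\{c_{2l+2,j}\}_j$ to produce one index $i$ with $b\wedge c_{2l+2,j}\leq b'$ for all $j\geq i$, after which only routine Boolean manipulations remain. The paper organizes those final manipulations slightly differently (first establishing $b=v\vee(k_{2l+1}-c_{2l+2,j})\vee b'$ and then upgrading $k_{2l+1}$ to $c_{2l+1,j}$ by a sandwich, rather than splitting $b$ along $c_{2l+2,j}$ and distributing), but this is a cosmetic difference.
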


\begin{proof}
  
  Since both $v$ and $k_{2l+1}-k_{2l+2}$ are below $\neg k_{2l+2}$ we
  have $b\leq \neg k_{2l+2}\vee b'$, or equivalently, $b\wedge
  k_{2l+2}\leq b'$. Now by Lemma~\ref{lem:kandc}\ref{item:8} we have
  $b\wedge \bigwedge_{i\in I} c_{2l+2,i}\leq b'$ and by compactness
  there is an $i_1\in I$ so that for all $j\in I$ with $i_1\leq j$ we
  have $b\wedge c_{2l+2,j}\leq b'$, or equivalently, $b\leq \neg
  c_{2l+2,j}\vee b'$. Now, for each $j\in I$ with $i_1\leq j$
  
  \begin{align*}
    b &=(\neg k_{2l+1}\wedge b)\vee(k_{2l+1}\wedge b)= v\vee(k_{2l+1}\wedge b)\\
      &\leq v\vee(k_{2l+1}\wedge (\neg c_{2l+2,j}\vee
        b'))=v\vee(k_{2l+1}\wedge\neg c_{2l+2,j})\vee (k_{2l+1}\wedge
        b')\\ 
      &\leq v\vee(k_{2l+1}- c_{2l+2,j})\vee b'\\
      &\leq v\vee(k_{2l+1}- k_{2l+2})\vee b'=b.
  \end{align*}
  
  \noindent Consequently, for each $j\in I$ with $i_1\leq j$ we have
  $b=v\vee(k_{2l+1}- c_{2l+2,j})\vee b'$. Now, since $c_{2l+2,j} =
  \overline{c_{2l+1,j}-b}^j \ge c_{2l+1,j}-b$, and thus, $b \ge
  c_{2l+1,j}- c_{2l+2,j}$, using also the inequality $c_{2l+1,j} \ge
  k_{2l+1}$ given by Lemma~\ref{lem:kandc}\ref{item:7}, we may deduce
  
  \begin{align*}
    b
    & = v \vee b \vee b' \ge v \vee (c_{2l+1,j}- c_{2l+2,j}) \vee b'
    \\ & \ge  v \vee (k_{2l+1,j}- c_{2l+2,j}) \vee b' = b.
  \end{align*}
  
  \noindent It then follows that for all $j\in I$ with $j \ge i_1$ we
  have
  
  \[
    b=v\vee( c_{2l+1,j}- c_{2l+2,j})\vee b'. \popQED
  \]
\end{proof}

\begin{remark}
  
  Notice that Corollary~\ref{cor:difchains} could also be seen as a
  consequence of Theorem~\ref{thrm:main}.  Let $D$ be any bounded
  distributive lattice, $B$ its Booleanization. For each finite
  bounded sublattice $D'$ of $D$, the embedding $f': D'\hookrightarrow
  D\hookrightarrow B$ has an upper adjoint $g':B\to D'$ given by
  $g'(b)=\bigwedge\{a\in D'\mid b\leq a\}=\min\{a\in D'\mid b\leq a\}$
  and this is a directed collection of adjunctions to which
  Theorem~\ref{thrm:main} applies thus yielding
  Corollary~\ref{cor:difchains}. In fact, in this way, we obtain more
  information as we see that the minimum length chain in $D$ is equal
  to the minimum length chain in $F(D^\delta)$, or equivalently, in
  the lattice of closed upsets of the dual space of $D$. In turn, this
  is the same as the maximum length of difference chains in the dual
  with respect to the clopen corresponding to the given element.
  
\end{remark}

In Section~\ref{sec:LoW} we will give an application of the following
consequence of Theorem~\ref{thrm:main}, which needs its full
generality.

\begin{corollary}\label{c:4}
  
  Let $B$ be a Boolean algebra, $I$ a directed partially ordered set,
  $\{S_i\}_{i \in I}$ a family of meet-semilattices and $\{f_i\colon
  B\leftrightarrows S_i\colon g_i\}_{i \in I}$ an indexed family of
  adjunctions satisfying conditions~\ref{item:1} and~\ref{item:2}.  We
  denote by $D$ the bounded distributive lattice $\bigcup_{i \in
    I}\im(g_i)$. Let $B' \le B$ be a Boolean subalgebra closed under
  each of the closure operators $\overline{(\ )}^i = g_if_i$ for $i\in
  I$.  Then,
  
  \[
    (D \cap B')^- = D^- \cap B',
  \]

  \noindent 
  where we view the Booleanization of any sublattice of $B$ as the
  Boolean subalgebra of $B$ that it generates.
  
\end{corollary}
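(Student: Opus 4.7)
The plan is to prove the two inclusions separately, with the nontrivial direction relying crucially on Theorem~\ref{thrm:main}.

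For the easy inclusion $(D \cap B')^- \subseteq D^- \cap B'$, I would simply observe that $D\cap B'$ is a bounded sublattice of both $D$ and $B'$. The Boolean subalgebra it generates inside $B$ is therefore contained in $D^-$ (the Boolean subalgebra generated by $D$) and also in $B'$ (which is already a Boolean subalgebra). Since the excerpt tells us that we view the Booleanization of any sublattice of $B$ as the Boolean subalgebra it generates, this is enough.

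For the hard inclusion $D^- \cap B' \subseteq (D \cap B')^-$, the idea is to apply Theorem~\ref{thrm:main} to an element $b \in D^- \cap B'$ and check that the canonical witnesses $c_{n,j}$ actually lie in $D \cap B'$. By Theorem~\ref{thrm:main}, there exist $m\in\N$ and $i\in I$ such that for every $j\in I$ with $j\ge i$,
\[
  b = \bigvee_{l=1}^{m}\bigl(c_{2l-1,j} - c_{2l,j}\bigr).
\]
Fix such a $j$. The key step is to check by induction on $n$ that each $c_{n,j}$ belongs to $D \cap B'$. Membership in $D$ is immediate from the definition: $c_{n,j} = g_j f_j(\,\cdot\,)$ belongs to $\im(g_j) \subseteq D$. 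For membership in $B'$, the base case is $c_{1,j} = \overline{b}^{\,j}$, which lies in $B'$ since $b\in B'$ and $B'$ is closed under $\overline{(\ )}^{\,j}$. For the inductive step, $c_{n,j}$ is obtained from the previous term and $b$ by a Boolean operation followed by $\overline{(\ )}^{\,j}$; since $B'$ is a Boolean subalgebra closed under each $\overline{(\ )}^{\,i}$, this stays inside $B'$.

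Once each $c_{n,j}\in D\cap B'$, the displayed equation exhibits $b$ as a Boolean combination of elements of $D\cap B'$, hence $b\in (D\cap B')^-$. The main obstacle is really just bookkeeping, namely verifying the induction that the $c_{n,j}$ remain in $B'$; this is where the hypothesis that $B'$ is closed under every closure operator $\overline{(\ )}^{\,i}$ is used in an essential way, and without it the argument would fail.
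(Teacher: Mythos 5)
Your proposal is correct and follows essentially the same route as the paper: the easy inclusion by containment of $D\cap B'$ in both Boolean algebras, and the hard inclusion by invoking Theorem~\ref{thrm:main} and checking that the witnesses $c_{n,j}$ lie in $D\cap B'$ (the induction you spell out is exactly what the paper compresses into the observation that the chain lies in $g_jf_j[B']\subseteq D\cap B'$).
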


\begin{proof}
  
  Since $D\cap B'$ is contained in both of the Boolean algebras $D^-$
  (also viewed as a subalgebra of $B$) and $B'$, the Booleanization of
  $D \cap B'$ is contained in their intersection.
  
  For the converse, let $b \in D^- \cap B'$. By
  Theorem~\ref{thrm:main}, there exists an index~$j$ so that $b$ can
  be written as a difference chain
  
  \[
    b = c_{1, j}- (c_{2,j}-(\dots - (c_{2m-1,j}-c_{2m,j})\dots)),
  \]
  
  \noindent where $c_{1,j} = \overline b^j$, $c_{2n, j} =
  \overline{c_{2n-1,j}-b}^j$ and $c_{2n+1,j} =
  \overline{c_{2n,j}\wedge b}^j$, for $n \ge 1$. But then, by
  hypothesis it follows that $c_{1,j}\ge \dots\ge c_{2m,j}$ is a chain
  in $g_jf_j[B'] \subseteq D\cap B'$. Thus, $b$ belongs to $(D \cap
  B')^-$.
\end{proof}

\begin{remark}\label{r:1}
  
  We remark that the closure of $B'$ under the closure operators
  $\overline{(\ )}^i $ for $i\in I$ implies that there is a family of
  adjunctions $\{f'_i\colon B'\leftrightarrows S'_i=
  g_i^{-1}(B')\colon g'_i\}_{i \in I}$ obtained by considering the
  restrictions $f_i'$ and $g_i'$ of $f_i$ and $g_i$,
  respectively. Notice that the closure of $B'$ under $g_if_i$ implies
  that $f_i$ maps $B'$ into $S'_i$ as it is defined. Also, since upper
  adjoints preserve meets, $S_i'$ is indeed a
  meet-semilattice. Finally, $D' = \bigcup_{i \in I} \im(g_i')$ is the
  bounded distributive lattice $D \cap B'$. Indeed, by definition of
  $S'_i$ we have $\im(g_i')=\im(g_i)\cap B'$ and thus

  \[
    D'= \bigcup_{i \in I} \im(g_i')= \bigcup_{i \in I} (\im(g_i)\cap
    B') = \left(\bigcup_{i \in I}\im(g_i)\right)\cap B'=D\cap B'.
  \]

  \noindent 
  Notice that we also have $\im(g_i) \cap B' = g_if_i[B']$. The
  right-to-left inclusion is trivial. Conversely, let $b \in \im(g_i)
  \cap B'$, say $b = g_i(s)$ for some $s \in S_i$. Then, we have
  $g_if_i(b) = g_if_ig_i(s) = g_i(s) = b$, where the second equality
  is well-known to hold for every adjoint pair $(f_i,g_i)$. Therefore,
  it follows that
  
  \[
    D' = D \cap B' = \bigcup_{i \in I}g_if_i[B'].
  \]
  
\end{remark}

We give an example to show that the conclusion of Corollary~\ref{c:4}
is by no means true in general.

\begin{example}
  
  Let $B=\cP(\{a,b,c\})$ be the eight element Boolean
  algebra. Further, let $D$ be the bounded sublattice generated by
  $\{a\}$ and $\{a,b\}$ and let $B'$ be the Boolean subalgebra
  generated by $\{b\}$. Then $B$ is, up to isomorphism, the
  Booleanization of $D$, and thus $D^- \cap B'=B'$, whereas $D\cap
  B'=(D\cap B')^-$ is the two-element Boolean subalgebra of $B$.
  
\end{example}

In order to formulate the application to the theory of formal
languages, we will need some concepts from logic on words.
\section{Preliminaries on formal languages and logic on
  words}\label{sec:back-LoW}
\paragraph{Formal languages}

An \emph{alphabet} is a finite set $A$, a \emph{word over $A$} is an
element of the free $A$-generated monoid $A^*$, and a \emph{language}
is a set of words over some alphabet.
For a word $w \in A^*$, we use $\card w$ to denote the \emph{length}
of $w$, that is, if $w = a_1 \dots a_n$ with each $a_i\in A$, then we
have $\card w = n$.
Given a homomorphism $f: A^* \to M$ into a finite monoid $M$, we say
that a language $L \subseteq A^*$ is \emph{recognized by $f$} if and
only if there is a subset $P \subseteq M$ such that $L = f^{-1}(P)$,
or equivalently, if $L = f^{-1}(f[L])$. The language $L$ is
\emph{recognized by a finite monoid~$M$} provided there is a
homomorphism into $M$ recognizing $L$.
Finally, a language is said to be \emph{regular} if it is recognized
by some finite monoid. Notice that the set of all regular languages
forms a Boolean algebra. Indeed, if a language is recognized by a
given finite monoid then so is its complement, and if $L_1$ and $L_2$
are recognized, respectively, by $M_1$ and $M_2$, then $L_1 \cap L_2$
and $L_1 \cup L_2$ are both recognized by the Cartesian product $M_1
\times M_2$.

We present a technical result that will be used in
Section~\ref{sec:LoW}. Given a monoid~$M$, its powerset $\cP(M)$ is a
monoid when equipped with pointwise multiplication, that is, for
subsets $P_1, P_2 \subseteq M$, we define

\[
  P_1 \cdot P_2 = \{m_1m_2 \mid m_1 \in P_1,\ m_2 \in P_2\}.
\]

\noindent It is easy to see that the preimage of a language recognized
by $M$ under a homomorphism between free monoids is again recognized
by $M$. This is not the case for direct images. However, the forward
image under a \emph{length-preserving homomorphism} between free
monoids (i.e., a homomorphism mapping letters to letters) of a
language recognized by $M$ is recognized by $\cP(M)$
\cite[Theorem~2.2]{Straubing79}. This is an instance of the fact that
modal algebras are dual to co-algebras for the Vietoris monad,
enriched in the category of monoids, see~\cite{BorlidoGehrke18}.
However, since this finitary instance is quite simple to derive, for
completeness, we include a proof.  For a subset $P \subseteq M$, we
denote

\[
  \diamond P = \{Q \subseteq M \mid Q \cap P \neq \emptyset\}.
\]

\begin{lemma}\label{l:6}
  
  Let $f:A^* \to B^*$ and $g:A^* \to M$ be homomorphisms, with~$f$
  length-preserving. Then, the map $h:B^* \to \cP(M)$ defined by $h(w)
  = g[f^{-1}(w)]$ is also a homomorphism. Moreover, for every $P
  \subseteq M$, the equality $h^{-1}(\diamond P) = f[g^{-1}(P)]$
  holds. In particular, if $L \subseteq A^*$ is a language recognized
  by $M$, then $f[L] \subseteq B^*$ is recognized by $\cP(M)$.
  
\end{lemma}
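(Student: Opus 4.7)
My plan is to verify the three assertions in turn; the one idea needed throughout is that, because $f$ is length-preserving, preimages under $f$ decompose uniquely along concatenations. More precisely, for any $w_1, w_2 \in B^*$, an element $u \in f^{-1}(w_1 w_2)$ has $\card u = \card{w_1} + \card{w_2}$, so it splits uniquely as $u = u_1 u_2$ with $\card{u_i} = \card{w_i}$, and then $f(u_1) f(u_2) = f(u) = w_1 w_2$ forces $f(u_i) = w_i$. Conversely any such concatenation lies in $f^{-1}(w_1 w_2)$. This gives the set-equality $f^{-1}(w_1 w_2) = f^{-1}(w_1) \cdot f^{-1}(w_2)$, where the right hand side is pointwise concatenation in $A^*$. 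Applying $g$ and using that direct images distribute over pointwise products, I obtain $h(w_1 w_2) = h(w_1) \cdot h(w_2)$; and since $f^{-1}(\varepsilon) = \{\varepsilon\}$, I also get $h(\varepsilon) = \{g(\varepsilon)\} = \{1_M\}$, which is the identity of $\cP(M)$. So $h$ is a monoid homomorphism.

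For the second claim, I would simply unfold definitions. We have $w \in h^{-1}(\diamond P)$ if and only if $h(w) \cap P = g[f^{-1}(w)] \cap P \neq \emptyset$, which is equivalent to the existence of some $u \in A^*$ with $f(u) = w$ and $g(u) \in P$. But that is precisely the condition $w \in f[g^{-1}(P)]$.

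Finally, for the ``in particular'' statement, assume that $L \subseteq A^*$ is recognized by $M$ via $g$, so that $L = g^{-1}(g[L])$. Setting $P = g[L]$, the previous identity gives
\[
  f[L] = f[g^{-1}(P)] = h^{-1}(\diamond P),
\]
which exhibits $f[L]$ as the preimage under the homomorphism $h \colon B^* \to \cP(M)$ of a subset of $\cP(M)$, and hence $f[L]$ is recognized by $\cP(M)$. The only genuine content is the multiplicativity of $h$, which rests entirely on the length-preserving hypothesis; without it one would only get an inclusion $f^{-1}(w_1) \cdot f^{-1}(w_2) \subseteq f^{-1}(w_1 w_2)$ and $h$ would fail to be a homomorphism in general.
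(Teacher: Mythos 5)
Your proof is correct and follows essentially the same route as the paper: multiplicativity of $h$ via the unique length-based factorization of preimages under the length-preserving $f$, and the identity $h^{-1}(\diamond P) = f[g^{-1}(P)]$ by unfolding definitions. You additionally check the unit and spell out the ``in particular'' step with $P = g[L]$, which the paper leaves implicit; both are fine.
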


\begin{proof}
  
  We first show that $h$ is a homomorphism. Let $v, w \in B^*$. Then,
  by definition, an element $m$ belongs to $h(vw)$ if and only if
  there exists $u \in f^{-1}(vw)$ such that $m = g(u)$. Since $f$ is
  length-preserving, there is a unique factorization of $u$, say $u =
  u_1u_2$, satisfying $u_1 \in f^{-1}(v)$ and $u_2 \in f^{-1}(w)$.
  Therefore, $h(uv) \subseteq h(u)h(v)$. The converse inclusion is
  trivial (in fact, it holds for every homomorphism $f$ between any
  two monoids).

  Now, let $P \subseteq M$ and $w \in B^*$. We may deduce the
  following:
  
  \begin{align*}
    w \in h^{-1}(\diamond P)
    & \iff g[f^{-1}(w)] \cap P \neq \emptyset
    \\ & \iff \exists u \in A^*
         \colon \ f(u) = w \quad \text{ and } \quad  g(u) \in P
    \\ & \iff w \in f[g^{-1}(P)].
  \end{align*}

  \noindent
  Thus, $h^{-1}(\diamond P) = g[f^{-1}(P)]$ as claimed.
\end{proof}

\begin{corollary}\label{cor:forward-image-reg}

  Let $A$ and $B$ be alphabets and $f:A^* \to B^*$ a length-preserving
  homomorphism. Then the forward image under $f$ of a regular language
  over $A$ is a regular language over $B$.
  
\end{corollary}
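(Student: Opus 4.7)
The plan is to derive this corollary as a direct application of Lemma~\ref{l:6}, which has already done all the substantive work. Let $L \subseteq A^*$ be a regular language. By definition, $L$ is recognized by some finite monoid $M$, so there exist a homomorphism $g\colon A^* \to M$ and a subset $P \subseteq M$ with $L = g^{-1}(P)$.

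Now I would invoke Lemma~\ref{l:6} with this $f$ and $g$. The lemma produces a homomorphism $h\colon B^* \to \cP(M)$ satisfying the key identity
\[
  h^{-1}(\diamond P) = f[g^{-1}(P)] = f[L].
\]
Thus $f[L]$ is the preimage under $h$ of the subset $\diamond P \subseteq \cP(M)$. Since $M$ is finite, so is $\cP(M)$, and therefore $h$ is a homomorphism from $B^*$ into a finite monoid that recognizes $f[L]$. This exhibits $f[L]$ as a regular language over $B$, completing the proof.

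There is essentially no obstacle here: the content of the result has been packaged into Lemma~\ref{l:6}, and the corollary is just the observation that the witnessing monoid $\cP(M)$ constructed there remains finite whenever $M$ is finite. The only thing worth being careful about is to check that $\diamond P$ is a well-defined subset of $\cP(M)$ (which is immediate from its definition as $\{Q \subseteq M \mid Q \cap P \neq \emptyset\}$) so that the recognition criterion from the preliminaries applies verbatim.
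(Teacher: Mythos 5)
Your proof is correct and is exactly the argument the paper intends: the corollary is stated as an immediate consequence of Lemma~\ref{l:6}, whose ``in particular'' clause already asserts that $f[L]$ is recognized by $\cP(M)$, and the only remaining observation is that $\cP(M)$ is finite when $M$ is. Nothing is missing.
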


We are mostly interested in languages that are defined by first-order
formulas of logic on words. Accordingly, we now introduce this logic.
\paragraph{Syntax of first-order logic on words}

Fix an alphabet $A$. We denote \emph{first-order variables} by $x, y,
z, x_1, x_2, \dots$. \emph{First-order formulas} are inductively built
as follows. For each letter $a \in A$, we consider a \emph{letter
  predicate}, also denoted by $a$, which is unary. Thus, for any
variable $x$, $a(x)$ is an (atomic) formula.  A \emph{$k$-ary
  numerical predicate} is a function $R: \N \to \cP(\N^k)$ satisfying
$R(n) \subseteq \{1, \ldots, n\}^k$ for every $n \in \N$. That is, $R$
is an element of the Boolean algebra $\Pi_{n\in\N}\cP(\{1, \ldots,
n\}^k)$. When we fix a set $\cR$ of numerical predicates, we will
assume it forms a Boolean subalgebra of $\Pi_{n\in\N}\cP(\{1, \ldots,
n\}^k)$.  Each $k$-ary numerical predicate~$R$ and any sequence $x_1,
\dots,x_k$ of first-order variables define an \emph{(atomic) formula}
$R(x_1, \ldots, x_k)$. Finally, Boolean combinations of formulas are
formulas, and if $\varphi$ is a formula and $x_1, \ldots, x_k$ are
distinct variables, then $\forall x_1, \ldots, x_k \ \varphi$ is a
\emph{formula}. In order to simplify the notation, we usually also
consider the quantifier $\exists$: the formula $\exists x_1, \dots,
x_k \ \varphi$ is an abbreviation for $\neg \forall x_1, \dots, x_k \
\neg \varphi$. As usual in logic, we say that a variable occurs
\emph{freely} in a formula provided it is not in the scope of a
quantifier, and a formula is said to be a \emph{sentence} provided it
has no free variables.
\emph{Quantifier-free} formulas are those that are Boolean
combinations of atomic formulas.

\paragraph{Semantics of first-order logic on words}

Let us fix an alphabet $A$ and a set of numerical predicates $\cR$.
To each non-empty word $w = a_1 \dots a_n \in A^*$ with $a_i\in A$, we
associate the relational structure $\cM_w = (\{1, \dots, n\}, A \cup
\cR)$, given by the interpretation $a^w = \{i \in \{1, \dots, n\}\mid
a_i = a\}$, for each $a \in A$, and $R^w = R(n)$, for each $R \in
\cR$.
Models of first-order sentences are words, while models of formulas
with free variables are the so-called \emph{structures}. For a list of
distinct variables ${\bf x} = (x_1, \dots, x_k)$, an
\emph{$\x$-structure} is a map $\{\x\} \to \cM_w$ for some word $w \in
A^*$, where $\{{\x}\}$ denotes the underlying set of $\x$.
We identify maps from $\{\x\}$ to $\{1, \dots, \card w\}$ with
$k$-tuples ${\bf i} = (i_1, \dots, i_k) \in \{1, \dots, \card
w\}^k$. With a slight abuse of notation we write $\{1, \dots, \card
w\}^\x$ for the set of all such maps.
Given a word $w \in A^*$ and a vector ${\bf i} = (i_1, \dots, i_k)\in
\{1, \dots, \card w\}^\x$, we denote by $w_{\x = {\bf i}}$ the
$\x$-structure mapping $x_j$ to $i_j$, for $j = 1, \dots, k$.
Moreover, if $\x = (x_1, \dots, x_k)$ and $\y= (y_1, \dots, y_\ell)$
are disjoint lists of variables, ${\bf i} = (i_1, \dots, i_k) \in \{1,
\dots, \card w\}^\x$ and ${\bf j} = (j_1, \dots, j_\ell) \in \{1,
\dots, \card w\}^{\y}$, then $w_{\x = {\bf i}, \y = {\bf j}}$ denotes
the ${\bf z}$-structure $w_{{\bf z} = {\bf k}}$, where ${\bf z} =
(x_1, \dots, x_k, y_1, \dots, y_\ell)$ and ${\bf k} = (i_1, \dots,
i_k, j_1, \dots, j_\ell)$.
The set of all $\x$-structures is denoted by $A^* \otimes
\x$. Finally, the set of models $L_{\varphi(\x)}$ of a formula
$\varphi(\x)$ having free variables in~$\{{\bf x}\}$ is defined
classically.  For further details concerning logic on words,
see~\cite[Chapter~II]{straubing94}.
The next example should help in understanding the semantics of
first-order logic on an intuitive level.

\begin{example}
  
  The sentence $\varphi = \exists x, y \ (x < y \wedge a(x)\wedge
  b(y))$ is read: ``there are positions $x$ and $y$ such that $x < y$,
  there is an $a$ at position $x$, and there is a $b$ at position
  $y$''. Thus, $\varphi$ defines the language $A^* a A^* b A^*$.
  
\end{example}

Formulas will always be considered up to semantic equivalence, even if
not explicitly said.
We denote by ${\bf FO}[\cN]$ the set of all first-order sentences with
arbitrary numerical predicates (up to semantic equivalence). For a
set~$\cR$ of numerical predicates, ${\bf FO}[\cR]$ denotes the set of
first-order sentences using numerical predicates from~$\cR$.
Notice that, as a Boolean algebra, ${\bf FO}[\cN]$ is naturally
equipped with a partial order, which in turn may be characterized in
terms of semantic containment: $\varphi \le \psi$ if and only if
$L_{\varphi} \subseteq L_{\psi}$. Finally, for a subset of sentences
$S \subseteq {\bf FO}[\cN]$, we use $\cL(S)$ to denote the set of
languages $\{L_\varphi \mid \varphi \in S\}$. In particular, this
yields an embedding of Boolean algebras ${\bf FO}[\cN] \cong \cL({\bf
  FO}[\cN])\subseteq\cP(A^*)$.

\paragraph{Universal quantifiers as adjoints}

Again, we fix a finite alphabet~$A$ and a set of variables~$\x$. We
consider the projection map given by

\[
  \pi\colon A^*\otimes {\x}\twoheadrightarrow A^*, \quad w_{\x = {\bf
      i}}\mapsto w.
\]

\noindent
This gives rise, via the duality between sets and complete and atomic
Boolean algebras, to the complete embedding of Boolean algebras

\[
  \pi^{-1} = (\ )\otimes \x \colon \cP(A^*)\hookrightarrow
  \cP(A^*\otimes \x), \quad L\mapsto \pi^{-1}(L)=L\otimes{\bf x}
\]

\noindent 
This embedding, being a complete homomorphism between complete
lattices, has an upper adjoint which we may call~$\forall$ (and a
lower adjoint~$\exists$). These are given by

\begin{align*}
  \forall:\cP(A^*\otimes \x)
  & \twoheadrightarrow \cP(A^*)
  \\ K
  & \mapsto\forall K=\max\{L\in\cP(A^*)\mid\pi^{-1}(L)=L\otimes\x\subseteq K\}\\
  & \hspace{12mm} = \{w \in A^* \mid \forall{\bf i} \in \{1, \dots, \card w\}^\x,\ w_{\x = {\bf i}} \in K\}\\
  & \hspace{12mm} =(\pi[K^c])^c
\end{align*}

\noindent
and similarly

\begin{align*}
  \exists:\cP(A^*\otimes \x)
  & \twoheadrightarrow \cP(A^*)\\ 
  K
  & \mapsto \exists K = \min \{L \in \cP(A^*)\mid K\subseteq\pi^{-1}(L) = L\otimes \x\}\\
  & \hspace{12mm} = \{w \in A^* \mid \exists{\bf i} \in \{1, \dots, \card w\}^\x,\ w_{\x = {\bf i}} \in K\}\\
  &\hspace{12mm} =\pi[K]
\end{align*}

As is well-known in categorical logic, $\forall$ and $\exists$ are the
semantic incarnations of the classical universal and existential
quantifiers, respectively.
Explicitly, in the case of the universal quantifier, when $K =
L_{\varphi(\x)}$ is definable by a formula $\varphi(\x)$, we have

\begin{equation}
  \label{eq:17}
  \forall L_{\varphi(\x)}=\{w \in A^* \mid \forall{\bf i} \in \{1, \dots, \card w\}^\x,
  \ w_{\x = {\bf i}} \vDash \varphi(\x)\}=L_{\forall \x \ \varphi(\x)}.
\end{equation}

\paragraph{Recognition of languages of structures}

We fix a list of distinct variables $\x = (x_1, \dots, x_k)$. Then,
$2^\x$ is isomorphic to the powerset $\cP(\x)$.  There is a natural
embedding of the set of all $\x$-structures into the free monoid $(A
\times 2^\x)^*$.  Indeed, to an $\x$-structure $w_{\x = {\bf i}}$,
where ${\bf i} = (i_1, \dots, i_k)$, we may assign the word $(a_1,S_1)
\dots (a_n, S_n)$, where $w = a_1 \dots a_n$ with each $a_i\in A$ and,
for $\ell \in \{1,\dots, n\}$, $S_\ell = \{x_j \in \{\x\} \mid i_j =
\ell\}$.  It is not hard to see that this mapping defines an injection
$A^* \otimes \x \hookrightarrow (A \times 2^\x)^*$. Moreover, an
element $(a_1, S_1) \dots (a_n, S_n)$ of $(A \times 2^\x)^*$
represents an $\x$-structure under this embedding precisely when
$\{S_\ell \mid \ell \in \{1, \dots, n\}, \ S_\ell \neq \emptyset\}$
forms a partition of $\x$. From hereon, we view $A^*\otimes \x$ as a
subset of $(A \times 2^\x)^*$ without further mention.

The identification $A^* \otimes \x \subseteq (A \times 2^\x)^*$
enables us to extend the notion of recognition to languages of
structures as follows. Given $L \subseteq A^* \otimes {\bf x}$, we say
that $L$ is \emph{recognized by a homomorphism} $f:(A \times 2^{\bf
  x})^* \to M$ if and only if there is $P \subseteq M$ such that $L =
f^{-1}(P)$, and we say that $L$ is \emph{recognized by a monoid} $M$
if there is a homomorphism $(A \times 2^\x)^* \to M$ recognizing
$L$. Accordingly, a language of structures is \emph{regular} provided
it is recognized by a finite monoid.

An important observation and a well-known fact in logic on words is
that, as a language over the alphabet $A \times 2^\x$, the set of all
$\x$-structures, $A^* \otimes \x$, is a regular language. To simplify
the notation, take $\x = (x)$ and let $N = \{0,1,n\}$ be the
three-element monoid satisfying $n^2 = 0$. Then, the unique
homomorphism $f: (A \times 2^\x)^* \to N$ satisfying $f(a, \emptyset)
= 1$ and $f(a, \{x\}) = n$ recognizes $A^* \otimes \x$ via $\{n\}$.
The general case is handled in a similar manner. This provides a
shortcut for proving regularity for languages of structures.

\begin{proposition}\label{prop:regstr}
  
  Let $L\subseteq A^* \otimes \x$ be a language of structures. Then
  $L$ is regular if and only if there exists a monoid homomorphism
  $f\colon(A\times 2^\x)^* \to M$ into a finite monoid~$M$ and a
  subset $P \subseteq M$ with $L = f^{-1}(P) \cap (A^* \otimes\x)$.

\end{proposition}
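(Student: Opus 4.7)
The plan is to prove the two implications separately, with the forward direction being essentially trivial from the definitions and the backward direction following from the explicit observation, already made in the paragraph preceding the proposition, that $A^* \otimes \x$ is itself a regular language over the alphabet $A \times 2^\x$.

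For the forward direction, suppose $L$ is regular. By definition of recognition for languages of structures, there is a homomorphism $f\colon (A \times 2^\x)^* \to M$ into a finite monoid and a subset $P \subseteq M$ with $L = f^{-1}(P)$. Since $L \subseteq A^* \otimes \x$ by hypothesis, intersecting with $A^* \otimes \x$ does not remove any element of $L$, so $L = f^{-1}(P) \cap (A^* \otimes \x)$ as required.

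For the backward direction, suppose $L = f^{-1}(P) \cap (A^* \otimes \x)$ for some homomorphism $f\colon (A \times 2^\x)^* \to M$ and some $P \subseteq M$. By the explicit construction sketched in the paper (building the three-element monoid $N$ with the non-trivial nilpotent element and generalizing to arbitrary tuples $\x$), there is a homomorphism $h\colon (A \times 2^\x)^* \to N$ into a finite monoid $N$ and a subset $Q \subseteq N$ such that $A^* \otimes \x = h^{-1}(Q)$. Now take the product monoid $M \times N$, which is finite, and consider the homomorphism $(f,h)\colon (A \times 2^\x)^* \to M \times N$ given componentwise. Then
\[
  (f,h)^{-1}(P \times Q) = f^{-1}(P) \cap h^{-1}(Q) = f^{-1}(P) \cap (A^* \otimes \x) = L,
\]
so $L$ is recognized by $M \times N$ via $P \times Q$, and is therefore regular as a language of structures.

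There is no real obstacle here; the entire content of the proposition is the compatibility of the ambient ``language of structures'' interpretation of regularity with the ordinary notion of regularity for languages over $(A \times 2^\x)^*$, and this reduces precisely to the fact that $A^* \otimes \x$ is regular, allowing one to absorb the ``$\cap\,(A^* \otimes \x)$'' into the recognizing homomorphism by taking a product with the recognizer of $A^* \otimes \x$.
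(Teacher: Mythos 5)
Your proof is correct and follows essentially the same route as the paper: the forward direction is immediate from $L \subseteq A^* \otimes \x$, and the backward direction uses the product of the given homomorphism with the recognizer of $A^* \otimes \x$ (the nilpotent-monoid construction) to absorb the intersection. No issues.
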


\begin{proof}
  
  If $L$ is regular, then there exists a monoid homomorphism
  $f\colon(A\times 2^\x)^* \to M$ onto a finite monoid~$M$ and a
  subset $P \subseteq M$ with $L = f^{-1}(P)$. Thus, in particular, $L
  = f^{-1}(P) \cap (A^* \otimes\x)$. Conversely, suppose there exists
  a monoid homomorphism $f\colon(A\times 2^\x)^* \to M$ onto a finite
  monoid~$M$ and a subset $P \subseteq M$ with $L = f^{-1}(P) \cap
  (A^* \otimes\x)$. Now, let $g\colon (A \times 2^\x)^*\to N$ be the
  monoid homomorphism onto a finite monoid, which recognizes the set
  of all structures, and let $Q\subseteq N$ be so that $A^* \otimes
  \x=g^{-1}(Q)$. Then the product map $f\times g\colon (A \times
  2^\x)^*\to M\times N$ recognizes $L$ using the subset $P\times Q$.
  And we conclude that $L$ is regular.
\end{proof}

\paragraph{Quantifier-free formulas}

We now provide an algebraic characterization of languages definable by
quantifier-free formulas.
Consider a fixed list of distinct variables $\x = (x_1, \dots,
x_k)$. We want a characterization of the languages of the form
$L_{\varphi(\x)}$ for $\varphi(\x)$ a quantifier-free formula whose
free variables are in $\{\x\}$.  We first provide a set theoretic
characterization of these languages. For this purpose, we say that
$L\subseteq X$ is \emph{set theoretically recognized} by $f\colon X\to
Y$ provided there is a subset $P\subseteq Y$ with $L=f^{-1}(P)$.

For a vector of letters ${\bf a} = (a_1, \dots, a_k) \in A^\x$, we
denote by ${\A}(\x)$ the conjunction $a_1(x_1) \wedge \dots \wedge
a_k(x_k)$.
If $w \in A^*$ is a word and ${\bf i} \in \{1, \dots, \card w\}^\x$,
then we denote by $w({\bf i})$ the unique vector ${\bf a}$ for which
$w_{\x = {\bf i}} \models {\bf a}(\x)$. That is, if $w = a_1 \dots
a_n$ with each $a_i\in A$ and ${\bf i} = (i_1, \dots, i_k)$, then
$w({\bf i}) = (a_{i_1},\dots, a_{i_k})$.

\begin{lemma}
  Let $K \subseteq A^* \otimes \x$. Then, $K$ is given by a
  quantifier-free first-order formula over $\x$ if and only if is it
  set theoretically recognized by the map
  
  \[
    c_A : A^* \otimes \x \to \N^{k+1} \times A^k, \qquad w_{\x = {\bf
        i}} \mapsto (\card w, {\bf i}, w({\bf i})).
  \]
  
\end{lemma}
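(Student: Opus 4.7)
The plan is to prove both directions by setting up a natural bijection between quantifier-free formulas (modulo the equivalence relation induced by $c_A$) and $c_A$-saturated subsets. The forward direction proceeds by structural induction on the formula, while the backward direction requires packaging an arbitrary subset $P \subseteq \N^{k+1} \times A^k$ into a finite disjunction of atomic conditions, exploiting the finiteness of $A^k$.

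For the forward direction, I would proceed by induction on the structure of the quantifier-free formula $\varphi(\x)$. The two kinds of atomic formulas are handled as follows. A letter predicate $a(x_j)$ defines the set $\{w_{\x=\ii} \mid (w(\ii))_j = a\}$, which is $c_A^{-1}(\N^{k+1} \times \{\A \in A^k \mid a_j = a\})$. A numerical predicate $R(x_{j_1},\dots,x_{j_\ell})$ defines a set depending only on $|w|$ and the positions $\ii$, hence also factors through $c_A$. The inductive step for negation, conjunction and disjunction is immediate because set-theoretic recognition by a fixed map is closed under Boolean operations on the target powerset.

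For the backward direction, suppose $K = c_A^{-1}(P)$ for some $P \subseteq \N^{k+1} \times A^k$. Decompose $P$ according to its $A^k$-coordinate: for each $\A \in A^k$, set $P_\A = \{(n,\ii) \in \N^{k+1} \mid (n,\ii,\A) \in P\}$, and define a $k$-ary numerical predicate $R_\A \colon \N \to \cP(\N^k)$ by
\[
  R_\A(n) = \{\ii \in \{1,\dots,n\}^k \mid (n,\ii) \in P_\A\}.
\]
Then, because $A^k$ is finite, the formula
\[
  \varphi(\x) \;=\; \bigvee_{\A \in A^k} \bigl(R_\A(\x) \wedge \A(\x)\bigr)
\]
is a genuine quantifier-free formula. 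A direct verification shows that $w_{\x=\ii} \models \varphi(\x)$ iff $(|w|,\ii) \in P_{w(\ii)}$ iff $c_A(w_{\x=\ii}) \in P$, so $L_{\varphi(\x)} = K$.

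The main obstacle, and the whole point of the statement, is managing the tension between the infinite target $\N^{k+1} \times A^k$ of $c_A$ and the requirement that a formula is a \emph{finite} syntactic object. This is resolved by observing that the finite alphabet forces $A^k$ to be finite (bounding the number of disjuncts) and that the definition of a numerical predicate already allows an infinite family of conditions, indexed by $\N$, to be captured by a single atomic symbol $R_\A(\x)$. Once this packaging is in place, the argument is essentially a bookkeeping exercise.
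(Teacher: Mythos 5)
Your proposal is correct and follows essentially the same route as the paper: the backward direction packages $P$ into the disjunction $\bigvee_{\A\in A^k}(\A(\x)\wedge R_\A(\x))$ over the finite set $A^k$ using numerical predicates built from the fibers of $P$, and the forward direction reduces to the two kinds of atomic formulas plus closure of set-theoretic recognition under Boolean operations. The only difference is one of detail: the paper explicitly constructs the reindexed $k$-ary numerical predicate $R'$ for an atomic formula $R(\y)$ with repeated variables, where you simply observe that such a set factors through $c_A$.
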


\begin{proof}
  
  Let $P\subseteq\N^{k+1} \times A^k$. For each $\A\in A^k$ and
  $n\in\N$, let

  \[
    R^{\,\A}(n)=\{\ii\in\N^k\mid i_s\leq n \text{ for each }s\leq
    k\text{ and }(\ii,n,\A)\in P\}.
  \]

  \noindent
  Then $R^{\,\A}$ is a ($k$-ary) numerical predicate for each $\A\in
  A^k$ and it is not difficult to see that
  $c_A^{-1}(P)=L_{\varphi(\x)}$ for

  \[
    \varphi(\x)=\bigvee_{\A\in A^k}\left(\A(\x)\wedge
      R^{\,\A}(\x)\right).
  \]

  \noindent
  On the other hand, for $a\in A$ and $i\in\{1,\dots,k\}$

  \[
    L_{a(x_i)}=c_A^{-1}(\N^{k+1} \times \{\A\in A^k\mid a_i=a\})
  \]

  \noindent
  and for $R\subseteq\N^{m+1}$ an $m$-ary numerical predicate and $\y$
  a list of $m$ (not necessarily distinct) variables with
  $\{\y\}\subseteq\{\x\}$, we have

  \[
    L_{R(\y)}=c_A^{-1} (R'\times A^k)
  \]

  \noindent
  where $R'$ is the $k$-ary numerical predicate given by

  \[
    (i_1,\dots,i_k)\in R'(n) \ \iff\ (j_1,\dots,j_m)\in R(n)
  \]

  \noindent
  where $j_s=i_t$ if and only if $y_s=x_t$.
\end{proof}

Observe that the above proof implies that the quantifier-free formulas
with free variables in~$\x$ form a complete Boolean algebra
(isomorphic to a powerset Boolean algebra) and that each of these
quantifier-free formulas may be written as a finite disjunction of
formulas of the very special form ${\bf a}(\x)\wedge R(\x)$.

Now, to obtain an algebraic characterization, let $\varepsilon \notin
A$ be a new symbol and denote $A_\varepsilon = A \cup
\{\varepsilon\}$. We consider the length-preserving homomorphism
$\Theta_\x: (A\times 2^\x)^* \to (A_\varepsilon \times 2^\x)^*$ given
by

\[
  \Theta_\x(a,S) =
  \begin{cases}
    (a, S), & \text{ if } S \neq \emptyset; \\ (\varepsilon, S), &
    \text{ if } S = \emptyset.
  \end{cases}
\]

\noindent
Notice that, given $\x$-structures $v_{\x = {\bf i}}$ and $w_{\x =
  {\bf j}}$, we have

\begin{equation}
  \label{eq:13}
  \Theta_\x(v_{\x = {\bf i}}) = \Theta_\x(w_{\x = {\bf j}}) \iff \card
  v = \card w,\ {\bf i} = {\bf j}, \text{ and } v({\bf i}) = w({\bf j}).
\end{equation}

\noindent
Using this observation, it is straightforward to show:

\begin{lemma}
  
  The following diagrams are well defined and commute
  
  \begin{center}
    \begin{tikzpicture}[node distance = 20mm]
      \node (A) at (0,0) {$(A \times 2^\x)^*$}; \node[right of = A,
      xshift = 15mm] (B) {$(A_\varepsilon \times 2^\x)^*$};
      \node[below of = A] (C) {$A^* \otimes \x$}; \node[below of = B]
      (D) {$A_\varepsilon^*\otimes \x$}; \node[below of = C] (E)
      {$\N^{k+1} \times A^k$}; \node[below of = D] (F) {$\N^{k+1}
        \times A_\varepsilon^k$};
      \draw[->] (A) to node[above]{$\Theta_\x$} (B); \draw[->] (C) to
      node[above]{$\Theta_\x|_{(\ ) \otimes \x}$} (D); \draw[>->] (E)
      to node[above]{$e$} (F);
      \draw[>->] (C) to (A); \draw[>->] (D) to (B); \draw[->](C) to
      node[left] {$c_A$} (E); \draw[->](D) to node[right]
      {$c_{A_\varepsilon}$} (F);
    \end{tikzpicture}
  \end{center}

  \noindent
  where $e$ is the natural inclusion obtained from the inclusion of
  $A$ in $A_\varepsilon$.  Furthermore,
  $\Theta_{\x}^{-1}(A^*_\varepsilon \otimes \x) = A^* \otimes \x$ and
  the restriction of $c_{A_\varepsilon}$ to $\Theta_{\x}[A^* \otimes
  \x]$ is a bijection onto $e[\N^{k+1} \times A^k]$.
\end{lemma}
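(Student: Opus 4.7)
The plan is to unfold definitions, leaning on the single structural observation that $\Theta_\x$ is length-preserving and alters the letter component of a position (to $\varepsilon$) exactly when the corresponding set-component is empty; in particular $\Theta_\x$ leaves the $S_\ell$ parts of any word intact. First I would check well-definedness of all the maps. An element $(a_1,S_1)\cdots(a_n,S_n)$ of $(A\times 2^\x)^*$ lies in $A^*\otimes\x$ if and only if the non-empty $S_\ell$ partition $\{\x\}$. Since $\Theta_\x$ leaves the second components untouched, this property is preserved (so $\Theta_\x$ sends $A^*\otimes\x$ into $A_\varepsilon^*\otimes\x$) and reflected (any word whose $\Theta_\x$-image satisfies the partition condition already does so itself). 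This yields simultaneously well-definedness of $\Theta_\x|_{(\ )\otimes\x}$ and the preimage equality $\Theta_\x^{-1}(A_\varepsilon^*\otimes\x)=A^*\otimes\x$; the inclusion $e$ is well-defined from $A\hookrightarrow A_\varepsilon$, and $c_{A_\varepsilon}$ was defined in the previous lemma.

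Next I would verify commutativity. The top square commutes tautologically, as $\Theta_\x|_{(\ )\otimes\x}$ is literally the restriction of $\Theta_\x$. For the bottom square, fix $w_{\x=\ii}\in A^*\otimes\x$ with $w=a_1\cdots a_n$ and $\ii=(i_1,\dots,i_k)$. Along one route, $e(c_A(w_{\x=\ii}))=(n,\ii,w(\ii))$ viewed now inside $\N^{k+1}\times A_\varepsilon^k$. Along the other, $\Theta_\x(w_{\x=\ii})$ is a word of length $n$ in $A_\varepsilon^*\otimes\x$ whose variable positions are still $\ii$ and whose letter at each position $i_j$ is still $a_{i_j}\in A$, because $S_{i_j}\neq\emptyset$ so $\Theta_\x$ does not touch it. Hence $c_{A_\varepsilon}(\Theta_\x(w_{\x=\ii}))=(n,\ii,w(\ii))$ as well, and the two routes agree.

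Finally, I would deduce the bijection claim. The image of $c_{A_\varepsilon}\circ\Theta_\x|_{(\ )\otimes\x}$ equals the image of $e\circ c_A$ by commutativity of the bottom square, and is therefore contained in $e[\N^{k+1}\times A^k]$; conversely, every triple $e(c_A(w_{\x=\ii}))$ in that image is realised by $\Theta_\x(w_{\x=\ii})$, which gives surjectivity. Injectivity is then immediate from \eqref{eq:13}: two $\Theta_\x$-images sharing the same triple agree in length, in $\ii$, and in the letters at the indexed positions, and since all remaining positions of $\Theta_\x$-images carry $(\varepsilon,\emptyset)$, the two $\Theta_\x$-images coincide, forcing the underlying structures to coincide by \eqref{eq:13}. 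The main obstacle here is really only notational bookkeeping: tracking the partition condition on the $S_\ell$ and keeping straight which of $A$, $A_\varepsilon$, or $A\times 2^\x$ one is working over at each step; the mathematical content is routine once the definitions are unpacked.
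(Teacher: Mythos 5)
Your proof is correct and is essentially the proof the paper leaves implicit: the paper gives no argument beyond pointing at \eqref{eq:13}, and the single structural fact you isolate --- that $\Theta_\x$ fixes all set-components and only replaces letters at positions with empty set-component --- is exactly what makes the well-definedness, the preimage equality, the commutativity of both squares, and the injectivity via \eqref{eq:13} go through. Your one deviation is an improvement in precision: you establish a bijection onto the image of $e\circ c_A$ rather than onto all of $e[\N^{k+1}\times A^k]$ as literally stated (which cannot hold, since $c_A$ is not surjective --- e.g.\ no structure hits a tuple whose position indices exceed its length coordinate), and this corrected form is all that the subsequent corollary uses.
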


As an immediate consequence, we have:

\begin{corollary}\label{cor:quantfree}
  
  Let $L \subseteq A^* \otimes \x$ be a language. Then, the following
  are equivalent:
  
  \begin{enumerate}
  \item\label{item:6} $L$ is definable by a quantifier-free formula;
  \item\label{item:16} $L = \Theta_{\x}^{-1}(\Theta_{\x}[L])$;
  \item\label{item:17} there is a subset $P \subseteq A_\varepsilon^*
    \otimes \x$ such that $L = \Theta_{\x}^{-1}(P)$.
  \end{enumerate}
  
\end{corollary}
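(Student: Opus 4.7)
The plan is to prove the cycle \ref{item:6}$\Rightarrow$\ref{item:16}$\Rightarrow$\ref{item:17}$\Rightarrow$\ref{item:6}. The driving observation, extracted from~\eqref{eq:13} together with the definition $c_A(w_{\x={\bf i}}) = (\card w,{\bf i},w({\bf i}))$ given in the first lemma of this subsection, is that for all $v_{\x={\bf i}},w_{\x={\bf j}} \in A^* \otimes \x$,
\[
  \Theta_\x(v_{\x={\bf i}}) = \Theta_\x(w_{\x={\bf j}}) \iff c_A(v_{\x={\bf i}}) = c_A(w_{\x={\bf j}}).
\]
In other words, the maps $\Theta_\x|_{A^*\otimes\x}$ and $c_A$ induce the same equivalence relation on $A^*\otimes\x$. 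In view of the earlier lemma characterizing quantifier-free definability as set theoretic recognition by $c_A$, this says exactly that the quantifier-free definable subsets of $A^*\otimes\x$ are precisely the unions of fibers of $\Theta_\x|_{A^*\otimes\x}$, which is essentially the content the corollary asks us to make explicit.

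For \ref{item:6}$\Rightarrow$\ref{item:16}, the inclusion $L \subseteq \Theta_\x^{-1}(\Theta_\x[L])$ is automatic; for the reverse, given $v_{\x={\bf i}} \in \Theta_\x^{-1}(\Theta_\x[L])$ I would pick $w_{\x={\bf j}} \in L$ with $\Theta_\x(v_{\x={\bf i}}) = \Theta_\x(w_{\x={\bf j}})$ and apply the key observation to conclude $c_A(v_{\x={\bf i}}) = c_A(w_{\x={\bf j}})$, so that $v_{\x={\bf i}} \in L$ since $L$ is a union of $c_A$-fibers. The implication \ref{item:16}$\Rightarrow$\ref{item:17} is trivial, just taking $P = \Theta_\x[L]$. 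For \ref{item:17}$\Rightarrow$\ref{item:6}, given $L = \Theta_\x^{-1}(P)$, I would verify directly that $L$ is closed under $c_A$-fibers: if $v_{\x={\bf i}} \in L$ and $c_A(w_{\x={\bf j}}) = c_A(v_{\x={\bf i}})$, then by the key observation $\Theta_\x(w_{\x={\bf j}}) = \Theta_\x(v_{\x={\bf i}}) \in P$, so $w_{\x={\bf j}} \in L$; quantifier-free definability then follows from the preceding lemma.

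There is no real obstacle; the corollary is essentially bookkeeping on top of the preceding lemma. The only subtlety worth verifying is that in \ref{item:16} the preimage $\Theta_\x^{-1}(\Theta_\x[L])$, formed in $(A \times 2^\x)^*$, is in fact contained in $A^* \otimes \x$, so that the kernel coincidence (which lives on $A^*\otimes\x$) is applicable throughout. This is immediate from $\Theta_\x[L] \subseteq A_\varepsilon^* \otimes \x$ together with the identity $\Theta_\x^{-1}(A_\varepsilon^* \otimes \x) = A^* \otimes \x$ recorded in the preceding lemma.
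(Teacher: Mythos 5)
Your proof is correct and follows essentially the route the paper intends: the corollary is stated there as an immediate consequence of the preceding lemma, whose content (the commuting diagram together with $\Theta_{\x}^{-1}(A_\varepsilon^* \otimes \x) = A^* \otimes \x$ and the injectivity of $c_{A_\varepsilon}$ on $\Theta_{\x}[A^* \otimes \x]$) is exactly the kernel coincidence between $\Theta_\x|_{A^*\otimes\x}$ and $c_A$ that you extract from~\eqref{eq:13}. Your write-up simply makes explicit the bookkeeping the paper leaves to the reader, including the (correct) check that the preimage under $\Theta_\x$ stays inside $A^* \otimes \x$.
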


\paragraph{The alternation hierarchy}

The so-called \emph{alternation hierarchy} has been widely considered
in the literature and its decidability (beyond $k=2$) remains an open
problem (see~\cite{PlaceZeitoun18}). The hierarchy classifies formulas
according to the minimum number of alternations of quantifiers that is
needed to express them in prenex-normal formula, that is, in the form

\begin{equation}
  \psi = Q_1{\x}_1  \dots Q_k{\x}_k\ \varphi(\x_1, \dots,
  \x_k),\label{eq:16}
\end{equation}

\noindent
where $\varphi$ is a quantifier-free formula, $Q_1, \dots, Q_k \in
\{\forall, \exists\}$ and $Q_\ell = \forall$ if and only if
$Q_{\ell+1} = \exists$ for each $\ell = 1, \dots, k-1$.
It is a well-known fact that for every first-order formula there is a
semantically equivalent one in prenex-normal form.
For $k \ge 1$ and a set of numerical predicates $\cR$, $\Pi_{k}[\cR]$
consists of all the sentences of ${\bf FO}[\cR]$ that are semantically
equivalent to a sentence of the form~\eqref{eq:16} where $Q_1 =
\forall$.
In particular, we have $\Pi_k[\cR] \subseteq \Pi_\ell[\cR]$ whenever
$k \le \ell$.
Similarly, $\Sigma_k[\cR]$ denotes the set of all sentences that are
semantically equivalent to a sentence of the form~\eqref{eq:16} with
$Q_1 = \exists$, and $\Sigma_k[\cR] \subseteq \Sigma_\ell[\cR]$
whenever $k \le \ell$.
It is not hard to see that both $\Pi_k[\cR]$ and $\Sigma_k[\cR]$ are
closed under disjunction and conjunction but not under negation. In
other words, $\Pi_k[\cR]$ and $\Sigma_k[\cR]$ are lattices, but not
Boolean algebras. We denote by $\cB\Pi_k[\cR]$ and by
$\cB\Sigma_k[\cR]$ the Boolean algebras generated by $\Pi_k[\cR]$ and
by $\Sigma_k[\cR]$, respectively, that is,

\[
  \cB\Pi_k[\cR] = (\Pi_k[\cR])^- \qquad\text{and}\qquad
  \cB\Sigma_k[\cR] = (\Sigma_k[\cR])^-.
\]

\noindent Clearly, we have $\cB\Pi_k[\cR] = \cB\Sigma_k[\cR] \subseteq
\Pi_{k+1}[\cR], \Sigma_{k+1}[\cR]$.

In this paper we are only concerned with the first level of this
hierarchy. For notational convenience, we will work with the fragment
$\Pi_1[\cN]$, although everything we prove for $\Pi_1[\cN]$ admits a
dual statement for $\Sigma_1[\cN]$.
We use $\reg$ to denote both the subset of ${\bf FO}[\cN]$ defining
regular languages, and the subset of numerical predicates for which
the associated language of structures $L_{R(\x)}$, where $\x$ is a
list of distinct variables of the same length as the arity of $R$, is
regular over the alphabet $A\times 2^\x$ as discussed under
recognition of languages of structures.
For $k > 1$, it is an open problem to determine whether the following
equality holds:

\[
  \cB\Pi_k[\cN] \cap \reg = \cB\Pi_k[\reg].
\]

\noindent Using the results of Section~\ref{sec:dir-fam-adj}, we
provide an elementary proof of the case $k = 1$, which was first
proved in~\cite{MacielPeladeauTherien00}.

Every formula of $\Pi_1[\cR]$ is of the form $\psi = \forall{\x}\
\varphi(\x)$, for some quantifier-free formula $\varphi(\x)$. Inside
$\Pi_1[\cR]$, we classify formulas according to the size of $\x$: we
let $\Pi_1^d[\cR]$ consist of all equivalence classes of such formulas
for which there is a representative~$\psi$ for which~$\x$ has~$d$
variables.
We remark that, $\Pi_1^d[\cR]$ is closed under conjunction, since the
formulas $\forall \x \ \varphi(\x) \wedge \forall \x \ \psi(\x)$ and
$\forall \x \ (\varphi(\x) \wedge \psi(\x))$ are semantically
equivalent.  However, in general, $\Pi_1^d[\cR]$ fails to be closed
under disjunction.

\begin{example}
  
  Let $\varphi(x) = a(x)$ and $\psi(x) = b(x)$. Then, $\forall x \
  \varphi(x)$ defines the language $a^*$, while $\forall x \ \psi(x)$
  defines the language $b^*$ and thus these are both in
  $\Pi_1^1[\cN]$.  The disjunction $\forall x \ \varphi(x)\vee \forall
  x \ \psi(x)$ defines the language $a^*\cup b^*$, while $\forall x \
  (\varphi(x) \vee \psi(x))$ defines the language $\{a,b\}^*$. Indeed,
  one can show that $\forall x \ \varphi(x)\vee \forall x \ \psi(x)$
  is not in $\Pi_1^1[\cN]$ while it is in $\Pi_1^2[\cN]$ as witnessed
  by the sentence $\forall x_1,x_2 \ \left(\varphi(x_1)\vee
    \psi(x_2)\right)$.
  
\end{example}

\section{An application to Logic on Words}\label{sec:LoW}

In this section we combine Corollary~\ref{c:4} and Remark~\ref{r:1} to
prove the equality

\begin{equation}
  \label{eq:9}
  \cB \Pi_1[\cN] \cap \reg = \cB\Pi_1[\reg].
\end{equation}

\noindent The idea is the following. Combining the fact that universal
quantification may be seen as an adjoint and our algebraic
characterization of quantifier-free formulas we obtain a directed
family of adjunctions on $\cP(A^*)$ with joint image equal to
$\cL(\Pi_1[\cN])$ allowing us to fit into the setting of
Theorem~\ref{thrm:main}. Finally we show that these adjunction
restrict correctly to the regular fragment thus allowing us to apply
Corollary~\ref{c:4} and Remark~\ref{r:1}, thereby concluding
that~\eqref{eq:9} holds.

Universal quantification (as an adjoint) and recognition of
quantifier-free formulas are based on the following two maps,
respectively

\[
  A^*\stackrel{\
    \pi}{\twoheadleftarrow}A^*\otimes\x\stackrel{\Theta_\x\
  }{\longrightarrow}A_\varepsilon^*\otimes\x
\]

\noindent
Dually this gives rise to

\[
  \begin{tikzpicture}
    \node (A) {{$\cP(A^*)$}}; \node (B) [node distance=3.5cm, right
    of=A] { {$\cP(A^*\otimes\x)$}}; \node (C) [node distance=3.5cm,
    right of=B] { {$\cP(A_\varepsilon^*\otimes\x)$}}; \draw[<-,bend
    right] (A) to node [below,midway] {$\exists$} (B); \draw[>->] (A)
    to node [above,midway] {$\pi^{-1}$} (B); \draw[->, bend right] (B)
    to node [above,midway]{$\forall$} (A); \draw[->,bend right] (B) to
    node [below,midway] {$\Theta_\x[\ ]$} (C); \draw[<-] (B) to node
    [above,midway] {$\Theta_\x^{-1}$} (C); \draw[<-, bend right] (C)
    to node [above,midway]{$(\Theta_\x[(\ )^c])^c$} (B);
  \end{tikzpicture}
\]

\noindent In particular, we have a (correct) composition of
adjunctions as follows

\[
  \begin{tikzpicture}
    \node (A) {{$\cP(A^*)$}}; \node (B) [node distance=3.5cm, right
    of=A] { {$\cP(A^*\otimes\x)$}}; \node (C) [node distance=3.5cm,
    right of=B] { {$\cP(A_\varepsilon^*\otimes\x)$}}; \draw[>->] (A)
    to node [above,midway] {$\pi^{-1}$} (B); \draw[->, bend right] (B)
    to node [above,midway]{$\forall$} (A); \draw[->,bend right] (B) to
    node [below,midway] {$\Theta_\x[\ ]$} (C); \draw[<-] (B) to node
    [above,midway] {$\Theta_\x^{-1}$} (C);
  \end{tikzpicture}
\]

\noindent
That is,

\[
  f_k=\Theta_\x[\pi^{-1}(\ )]\colon
  \ce{\cP(A^*)\myleftrightarrows{\rule{.75cm}{0cm}}
    \cP(A_\varepsilon^*\otimes\x)} \colon \forall(\Theta_\x^{-1}(\
  ))=g_k
\]

\noindent
is an adjunction and, combining quantification as adjunction with the
description of quantifier free formulas in
Corollary~\ref{cor:quantfree}\ref{item:17}, we have $L\subseteq A^*$
is in $\cL(\Pi_1^k[\cN])$ if and only if $L=\forall (\Theta_\x^{-1}
(P))=g_k(P)$ for some $P\subseteq A_\varepsilon^*\otimes\x$. That is,
$(f_k,g_k)$ is an adjunction with associated closure operator

\[
  \lceil L\rceil_k:=g_kf_k(L)=\forall
  \Theta_\x^{-1}(\Theta_\x[L\otimes\x]),
\]

\noindent
and $\im(\, \lceil \ \
\rceil_k)=\im(g_k)=\cL(\Pi_1^k[\cN])$. Accordingly, we are in the
situation of Theorem~\ref{thrm:main} with

\[
  B=\cP(A^*)\text{ and
  }D=\bigcup_{k\in\N}\im(g_k)=\bigcup_{k\in\N}\cL(\Pi_1^k[\cN])=\cL(\Pi_1[\cN]).
\]

\noindent
We now aim to apply Corollary~\ref{c:4} with $B'=\cL(\reg)$, the
Boolean algebra of all regular languages over the alphabet $A$. This
is possible given the following fact.

\begin{lemma}\label{lem:final}
  
  For each $k\in\N$, if $L\subseteq A^*$ is regular, then so is
  $\lceil L\rceil_k$.

\end{lemma}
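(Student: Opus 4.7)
The plan is to chase the definition $\lceil L\rceil_k = \forall\,\Theta_\x^{-1}(\Theta_\x[L\otimes\x])$ from the inside out, verifying at each step that regularity is preserved. The toolbox consists of the standard closure of regular languages under Boolean operations and under preimages of monoid homomorphisms, the observation that $A^*\otimes\x$ is a regular sublanguage of $(A\times 2^\x)^*$, and — crucially — the fact from Corollary~\ref{cor:forward-image-reg} that forward images under length-preserving homomorphisms of free monoids are regular.

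First I would extend the projection to the length-preserving homomorphism $\tilde\pi\colon (A\times 2^\x)^*\to A^*$, $(a,S)\mapsto a$, which agrees with $\pi$ on structures. Then $L\otimes\x = \tilde\pi^{-1}(L)\cap (A^*\otimes\x)$ is regular over $A\times 2^\x$ as an intersection of two regular languages. Since $\Theta_\x$ is length-preserving, Corollary~\ref{cor:forward-image-reg} gives that $\Theta_\x[L\otimes\x]$ is regular over $A_\varepsilon\times 2^\x$, and then taking the homomorphism preimage $\Theta_\x^{-1}$ shows that $K := \Theta_\x^{-1}(\Theta_\x[L\otimes\x])$ is regular over $A\times 2^\x$. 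Note that $K\subseteq A^*\otimes\x$ because $\Theta_\x^{-1}(A_\varepsilon^*\otimes\x) = A^*\otimes\x$, so $K$ may also be viewed as a regular language of structures.

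The remaining step — applying $\forall$ — is the least routine part and the place where the length-preserving hypothesis earns its keep. Using the identity $\forall K = (\pi[K^c])^c$ recorded in the preliminaries on universal quantifiers as adjoints (with $K^c$ the complement taken inside $A^*\otimes\x$), I would write $K^c = (A^*\otimes\x)\setminus K$, which is still regular over $A\times 2^\x$ because both $A^*\otimes\x$ and $K$ are. Since $\tilde\pi$ restricted to structures coincides with $\pi$, we have $\pi[K^c] = \tilde\pi[K^c]$, and a second invocation of Corollary~\ref{cor:forward-image-reg} shows that $\pi[K^c]$ is regular over~$A$. A final Boolean complementation then yields that $\lceil L\rceil_k$ is regular.

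The main obstacle is this last $\forall$ step: preimages and Boolean combinations preserve regularity effortlessly, but forward images of monoid homomorphisms generally do not. The argument only works because $\forall$ can be reexpressed as a complement-project-complement using the \emph{length-preserving} projection $\tilde\pi$, which is precisely the setting in which Corollary~\ref{cor:forward-image-reg} applies.
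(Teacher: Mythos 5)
Your proposal is correct and follows essentially the same four-step argument as the paper: establish regularity of $L\otimes\x$ via the extended projection, push forward along the length-preserving $\Theta_\x$ using Corollary~\ref{cor:forward-image-reg}, pull back along $\Theta_\x^{-1}$, and handle $\forall$ by the complement--project--complement identity with a second application of Corollary~\ref{cor:forward-image-reg}. The only cosmetic difference is that you verify regularity of $L\otimes\x$ directly as $\tilde\pi^{-1}(L)\cap(A^*\otimes\x)$ rather than citing Proposition~\ref{prop:regstr}, which amounts to the same thing.
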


\begin{proof}
  
  Fix $k\in\N$ and suppose $L\subseteq A^*$ is regular. We proceed
  through the four maps whose composition defines $\lceil \ \
  \rceil_k$.

  {\bf Claim 1.} $L\otimes\x$ is regular.\\
  Note that if $\mu\colon A^*\to M$ is a finite monoid recognizing
  $L$, then $\pi^*\colon(A\times 2^\x)^*\to A^*$ composed with $\mu$,
  where $\pi^*$ is the homomorphism extending the projection of
  $A\times 2^\x$ onto $A$, recognizes $L\otimes\x$ once we restrict to
  structures.  By Proposition~\ref{prop:regstr} it follows that
  $L\otimes\x$ is regular.

  {\bf Claim 2.} $\Theta_\x[L\otimes\x]$ is regular.\\
  Here we note that $\Theta_\x\colon(A\times
  2^\x)^*\to(A_\varepsilon\times 2^\x)^*$ is a length-preserving
  homomorphism so that $L\otimes\x$ regular implies
  $\Theta_\x[L\otimes\x]$ is regular by
  Corollary~\ref{cor:forward-image-reg}.

  {\bf Claim 3.} $\Theta_\x^{-1}(\Theta_\x[L\otimes\x])$ is regular.\\
  This is immediate as the inverse image with respect to a
  homomorphism between free monoids of a regular language is always
  regular: If $\Theta_\x[L\otimes\x]$ is recognized by $f'\colon
  (A_\varepsilon\times 2^\x)^*\to M'$ then the composition
  $f'\circ\Theta_\x\colon(A\times 2^\x)^*\to M'$ recognizes
  $\Theta_\x^{-1}(\Theta_\x[L\otimes\x])$.

  {\bf Claim 4.} $\forall(\Theta_\x^{-1}(\Theta_\x[L\otimes\x]))$ is regular.\\
  As observed in Section~\ref{sec:back-LoW}, the upper adjoint
  $\forall$ is given by $K\mapsto (\pi[K^c])^c$ where $\pi\colon
  A^*\otimes\x\to A^*$ is the restriction of $\pi^*\colon(A\times
  2^\x)^*\to A^*$ to structures.  It follows that
  $\pi[K^c]=\pi^*[K^c\cap(A^*\otimes\x)]$. Now, since
  $K=\Theta_\x^{-1}(\Theta_\x[L\otimes\x])$ is regular, $K^c$ is also
  regular and $K^c\cap(A^*\otimes\x)$ is regular. Further, noting that
  $\pi^*$ is a length-preserving monoid homomorphism, it follows by
  Corollary~\ref{cor:forward-image-reg} that $\pi^*[(\Theta_\x^{-1}
  (\Theta_\x[L\otimes\x]))^c\cap A\otimes\x]$ is regular. Finally, we
  conclude that its complement
  $\forall(\Theta_\x^{-1}(\Theta_\x[L\otimes\x]))=(\pi^*[(\Theta_\x^{-1}(\Theta_\x[L\otimes\x]))^c\cap
  A\otimes\x])^c$ is regular as required.
\end{proof}

As a consequence, Corollary~\ref{c:4} applies and we obtain:

\begin{corollary}

  Considering each of the following Booleanizations as subalgebras of
  $\cP(A^*)$, we have
  
  \[
    (\cL(\Pi_1[\cN])\cap\cL(\reg))^-=(\cL(\Pi_1[\cN]))^-\cap\cL(\reg).
  \]
  
\end{corollary}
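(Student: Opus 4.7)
The plan is to apply Corollary~\ref{c:4} directly to the directed family of adjunctions $\{(f_k,g_k)\}_{k\in\N}$ constructed in the paragraphs above, taking $B=\cP(A^*)$, $D=\cL(\Pi_1[\cN])$, and $B'=\cL(\reg)$. Once the hypotheses of Corollary~\ref{c:4} are in place, its conclusion $(D\cap B')^-=D^-\cap B'$ is exactly the desired equality.

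First I would confirm that the family $\{(f_k,g_k)\}_{k\in\N}$ meets conditions~\ref{item:D1} and~\ref{item:D2}. For~\ref{item:D1}, we have $\im(g_k)=\cL(\Pi_1^k[\cN])$, and the inclusion $\cL(\Pi_1^k[\cN])\subseteq \cL(\Pi_1^\ell[\cN])$ for $k\leq\ell$ is witnessed by adjoining $\ell-k$ dummy universally quantified variables to any representative sentence. For~\ref{item:D2}, the union $\bigcup_{k\in\N}\cL(\Pi_1^k[\cN])=\cL(\Pi_1[\cN])$ contains $\emptyset$ and $A^*$ and is closed under the Boolean algebra operations needed to be a bounded sublattice of $\cP(A^*)$: conjunction is handled by $\forall\x\,\varphi\wedge\forall\y\,\psi\equiv\forall\x,\y\,(\varphi\wedge\psi)$, and disjunction by $\forall\x\,\varphi\vee\forall\y\,\psi\equiv\forall\x,\y\,(\varphi\vee\psi)$, provided one renames so that $\x$ and $\y$ are disjoint, as already illustrated by the example following the definition of $\Pi_1^d[\cR]$.

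Second, the closure of $B'=\cL(\reg)$ under each closure operator $\lceil\ \rceil_k=g_kf_k$ is precisely Lemma~\ref{lem:final}, whose four-step tracing through $L\mapsto L\otimes\x\mapsto\Theta_\x[L\otimes\x]\mapsto\Theta_\x^{-1}(\Theta_\x[L\otimes\x])\mapsto\forall\Theta_\x^{-1}(\Theta_\x[L\otimes\x])$ has already been carried out. With both hypotheses verified, Corollary~\ref{c:4} yields $(\cL(\Pi_1[\cN])\cap\cL(\reg))^-=(\cL(\Pi_1[\cN]))^-\cap\cL(\reg)$ with no further work. There is no remaining obstacle in the present corollary; the content lies in the general adjunction-theoretic machinery of Corollary~\ref{c:4} together with Lemma~\ref{lem:final}, and what remains here is simply the assembly of these pieces into the setting of logic on words.
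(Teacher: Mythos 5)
Your proposal is correct and follows essentially the same route as the paper: the paper likewise obtains the corollary by applying Corollary~\ref{c:4} to the directed family $\{(f_k,g_k)\}_{k\in\N}$ with $B=\cP(A^*)$, $D=\cL(\Pi_1[\cN])$, and $B'=\cL(\reg)$, with Lemma~\ref{lem:final} supplying the closure of $\cL(\reg)$ under the operators $\lceil\ \rceil_k$. Your explicit verification of \ref{item:D1} and \ref{item:D2} (via dummy variables and the prenexing of conjunctions and disjunctions over disjoint variable lists) just spells out what the paper establishes in its preliminaries on $\Pi_1^d[\cR]$.
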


Finally, applying Remark~\ref{r:1} in this particular case, we see
that

\begin{align*}
  \cL(\Pi_1[\cN]) \cap \cL(\reg)
  & =\bigcup_{k\in\N} g_kf_k[\cL(\reg)].
\end{align*}

\noindent
The languages in $g_kf_k[\cL(\reg)]$ are exactly the languages $\lceil
L\rceil_k$ for $L$ regular.  By the proof of Lemma~\ref{lem:final}, we
have that $\lceil L\rceil_k=\forall (\Theta_\x^{-1}(P))$ where
$P=\Theta_\x[L\otimes\x]\subseteq(A_\varepsilon\times 2^\x)^*$, and,
by Claim 2 in particular, we have that $P$ is regular. That is,
$\lceil L\rceil_k=L_{\forall\x\,\varphi(\x)}$ where the atomic formula
$\varphi(\x)$ is regular, or equivalently, $\lceil
L\rceil_k\in\cL(\Pi_1[\reg])$.

On the other hand if $\varphi(\x)$ is an atomic formula that is
regular, then the arguments in Claims~3 and~4 show that
$L_{\forall\x\,\varphi(\x)}$ is regular. Thus

\[
  g_kf_k[\cL(\reg)]=\cL(\Pi_1^k[\reg])
\]

\noindent
and we have

\begin{align*}
  (\cL(\Pi_1[\reg]))^-
  & =( \bigcup_{k\in\N}\cL(\Pi_1^k[\reg]
    )^-=(\cL(\Pi_1[\cN]) \cap
    \cL(\reg))^-
  \\ & =(\cL(\Pi_1[\cN]))^-\cap\cL(\reg).
\end{align*}

\noindent
Since we consider logical formulas up to semantic equivalence, we
obtain the desired result:

\begin{theorem}
  The following equality holds:
  
 \[
   \cB\Pi_1[\reg]= \cB\Pi_1[\cN] \cap \reg.
 \]
 
\end{theorem}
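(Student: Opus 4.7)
The plan is to derive the theorem as a direct application of Corollary~\ref{c:4} together with Remark~\ref{r:1} and Lemma~\ref{lem:final}. The strategy is to realize $\cL(\Pi_1[\cN])$ as the union of images of a directed family of adjunctions on $\cP(A^*)$, to check that $\cL(\reg)$ is stable under the associated closure operators, and then to read off the regular fragment of the Booleanization via the general result of Section~\ref{sec:dir-fam-adj}.

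First, I would set up the directed family of adjunctions. For each $k \in \N$, fix a list $\x_k$ of $k$ distinct variables and define
\[
f_k = \Theta_{\x_k}[\pi^{-1}(\ )] \colon \cP(A^*) \leftrightarrows \cP(A_\varepsilon^* \otimes \x_k) \colon \forall \Theta_{\x_k}^{-1}(\ ) = g_k,
\]
as displayed just above Lemma~\ref{lem:final}. Characterizing $\Pi_1^k$ via Corollary~\ref{cor:quantfree}\ref{item:17} and \eqref{eq:17}, we see that $\im(g_k) = \cL(\Pi_1^k[\cN])$. Since $\Pi_1^k[\cN] \subseteq \Pi_1^{k+1}[\cN]$ (by padding with a dummy variable), condition~\ref{item:D1} holds, and by definition $\bigcup_{k} \im(g_k) = \cL(\Pi_1[\cN])$, giving~\ref{item:D2}. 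Thus Theorem~\ref{thrm:main} applies with $B = \cP(A^*)$ and $D = \cL(\Pi_1[\cN])$, noting in particular that $D^- = \cL(\cB\Pi_1[\cN])$.

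Second, I would take $B' = \cL(\reg) \le B = \cP(A^*)$. Lemma~\ref{lem:final} says precisely that each closure operator $\lceil\ \rceil_k = g_k f_k$ maps regular languages to regular languages, so $B'$ is closed under every $\overline{(\ )}^k$. Corollary~\ref{c:4} then immediately yields
\[
(\cL(\Pi_1[\cN]) \cap \cL(\reg))^- = \cL(\Pi_1[\cN])^- \cap \cL(\reg) = \cL(\cB\Pi_1[\cN]) \cap \cL(\reg),
\]
which rewrites as $\cB\Pi_1[\cN] \cap \reg$ on the right-hand side.

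Third, to identify the left-hand side with $\cB\Pi_1[\reg]$, I would apply Remark~\ref{r:1}: it gives $D \cap B' = \bigcup_k g_k f_k[B'] = \bigcup_k \lceil \cL(\reg) \rceil_k$. It then suffices to show that $g_k f_k[\cL(\reg)] = \cL(\Pi_1^k[\reg])$. The inclusion ``$\subseteq$'' is immediate from the proof of Lemma~\ref{lem:final}: for regular $L$, we have $\lceil L \rceil_k = L_{\forall \x_k\, \varphi(\x_k)}$ where $\varphi$ is the quantifier-free formula associated to the regular set $\Theta_{\x_k}[L \otimes \x_k]$, so its atomic numerical predicates are regular. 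The converse inclusion follows by noting that Claims~3 and~4 in the proof of Lemma~\ref{lem:final} show directly that any sentence $\forall \x_k\, \varphi(\x_k)$ with $\varphi$ quantifier-free over regular predicates defines a regular language, and then such a language lies in the image of $g_k f_k$ restricted to $\cL(\reg)$ by the form of $\lceil\ \rceil_k$. Taking Booleanizations and unions in $k$ gives
\[
\cB\Pi_1[\reg] = \left( \bigcup_{k \in \N} \cL(\Pi_1^k[\reg]) \right)^- = (D \cap B')^- = \cB\Pi_1[\cN] \cap \reg,
\]
as desired.

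The only genuine content beyond bookkeeping is the regularity-preservation step, namely Lemma~\ref{lem:final}, which has already been established using Proposition~\ref{prop:regstr} and Corollary~\ref{cor:forward-image-reg}; apart from that, the proof is a matter of plugging the ingredients of Section~\ref{sec:back-LoW} into the abstract machinery of Section~\ref{sec:dir-fam-adj}. I expect the one point that needs minor care is the verification that the identification $g_k f_k[\cL(\reg)] = \cL(\Pi_1^k[\reg])$ is tight in both directions, since this is what converts the abstract Booleanization statement of Corollary~\ref{c:4} into the concrete logical equality.
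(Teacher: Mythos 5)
Your proposal is correct and follows essentially the same route as the paper: setting up the directed family of adjunctions $(f_k,g_k)$ with $\im(g_k)=\cL(\Pi_1^k[\cN])$, invoking Lemma~\ref{lem:final} to verify that $\cL(\reg)$ is closed under the closure operators, and then combining Corollary~\ref{c:4} with Remark~\ref{r:1} and the identification $g_kf_k[\cL(\reg)]=\cL(\Pi_1^k[\reg])$ exactly as the paper does. The one step you flag as needing care --- the two inclusions of that identification --- is handled in the paper in precisely the way you describe, via Claims 2--4 of the proof of Lemma~\ref{lem:final} and the equality $\im(g_k)\cap B'=g_kf_k[B']$ from Remark~\ref{r:1}.
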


\end{document}